\begin{document}

\newtheorem{theorem}{Theorem}[section]
\newtheorem{prop}[theorem]{Proposition}
\newtheorem{lemma}[theorem]{Lemma}
\newtheorem{cor}[theorem]{Corollary}
\newtheorem{cond}[theorem]{Condition}
\newtheorem{ing}[theorem]{Ingredients}
\newtheorem{conj}[theorem]{Conjecture}
\newtheorem{claim}[theorem]{Claim}
\newtheorem{constr}[theorem]{Construction}
\newtheorem{rem}[theorem]{Remark}
\newtheorem{defn}[theorem]{Definition}
\newtheorem{eg}[theorem]{Example}
\newtheorem{rmk}[theorem]{Remark}

\newtheorem*{theorem*}{Theorem}
\newtheorem*{modf}{Modification for arbitrary $n$}
\newtheorem{qn}[theorem]{Question}
\newtheorem{condn}[theorem]{Condition}
\newtheorem*{BGIT}{Bounded Geodesic Image Theorem}
\newtheorem*{BI}{Behrstock Inequality}
\newtheorem*{QCH}{Wise's Quasiconvex Hierarchy Theorem}

\newcommand{\map}{\rightarrow}
\newcommand{\boundary}{\partial}
\newcommand{\C}{{\mathbb C}}
\newcommand{\integers}{{\mathbb Z}}
\newcommand{\G}{{\Gamma}}
\newcommand{\natls}{{\mathbb N}}
\newcommand{\ratls}{{\mathbb Q}}
\newcommand{\reals}{{\mathbb R}}
\newcommand{\proj}{{\mathbb P}}
\newcommand{\lhp}{{\mathbb L}}
\newcommand{\tr}{{\operatorname{Tread}}}
\newcommand{\rs}{{\operatorname{Riser}}}
\newcommand{\tube}{{\mathbb T}}
\newcommand{\cusp}{{\mathbb P}}
\newcommand\AAA{{\mathcal A}}
\newcommand\BB{{\mathcal B}}
\newcommand\CC{{\mathcal C}}
\newcommand\ccd{{{\mathcal C}_\Delta}}
\newcommand\DD{{\mathcal D}}
\newcommand\EE{{\mathcal E}}
\newcommand\FF{{\mathcal F}}

\newcommand\GG{{\mathcal G}}
\newcommand\GGs{{{\mathcal G}^*}}
\newcommand\HH{{\mathcal H}}
\newcommand\II{{\mathcal I}}
\newcommand\JJ{{\mathcal J}}
\newcommand\KK{{\mathcal K}}
\newcommand\LL{{\mathcal L}}
\newcommand\MM{{\mathcal M}}
\newcommand\NN{{\mathcal N}}
\newcommand\OO{{\mathcal O}}
\newcommand\PP{{\mathcal P}}
\newcommand\QQ{{\mathcal Q}}
\newcommand\RR{{\mathcal R}}
\newcommand\SSS{{\mathcal S}}

\newcommand\TT{{\mathcal T}}
\newcommand\ttt{{\mathcal T}_T}
\newcommand\tT{{\widetilde T}}
\newcommand\UU{{\mathcal U}}
\newcommand\VV{{\mathcal V}}
\newcommand\WW{{\mathcal W}}
\newcommand\XX{{\mathcal X}}
\newcommand\YY{{\mathcal Y}}
\newcommand\ZZ{{\mathcal Z}}
\newcommand\CH{{\CC\HH}}
\newcommand\TC{{\TT\CC}}
\newcommand\EXH{{ \EE (X, \HH )}}
\newcommand\GXH{{ \GG (X, \HH )}}
\newcommand\GYH{{ \GG (Y, \HH )}}
\newcommand\PEX{{\PP\EE  (X, \HH , \GG , \LL )}}
\newcommand\MF{{\MM\FF}}
\newcommand\PMF{{\PP\kern-2pt\MM\FF}}
\newcommand\ML{{\MM\LL}}
\newcommand\mr{{\RR_\MM}}
\newcommand\tmr{{\til{\RR_\MM}}}
\newcommand\PML{{\PP\kern-2pt\MM\LL}}
\newcommand\GL{{\GG\LL}}
\newcommand\Pol{{\mathcal P}}
\newcommand\half{{\textstyle{\frac12}}}
\newcommand\Half{{\frac12}}
\newcommand\Mod{\operatorname{Mod}}
\newcommand\Area{\operatorname{Area}}
\newcommand\ep{\epsilon}
\newcommand\hhat{\widehat}
\newcommand\Proj{{\mathbf P}}
\newcommand\U{{\mathbf U}}
 \newcommand\Hyp{{\mathbf H}}
\newcommand\D{{\mathbf D}}
\newcommand\Z{{\mathbb Z}}
\newcommand\K{{\mathbb K}}
\newcommand\R{{\mathbb R}}
\newcommand\bN{\mathbb{N}}
\newcommand\s{{\Sigma}}
\renewcommand\P{{\mathbb P}}
\newcommand\tga{\til{\gamma}}
\newcommand\Q{{\mathbb Q}}
\newcommand\E{{\mathbb E}}
\newcommand\til{\widetilde}
\newcommand\length{\operatorname{length}}
\newcommand\BU{\operatorname{BU}}
\newcommand\gesim{\succ}
\newcommand\lesim{\prec}
\newcommand\simle{\lesim}
\newcommand\simge{\gesim}
\newcommand{\simmult}{\asymp}
\newcommand{\simadd}{\mathrel{\overset{\text{\tiny $+$}}{\sim}}}
\newcommand{\ssm}{\setminus}
\newcommand{\diam}{\operatorname{diam}}
\newcommand{\pair}[1]{\langle #1\rangle}
\newcommand{\T}{{\mathbf T}}
\newcommand{\inj}{\operatorname{inj}}
\newcommand{\pleat}{\operatorname{\mathbf{pleat}}}
\newcommand{\short}{\operatorname{\mathbf{short}}}
\newcommand{\vertices}{\operatorname{vert}}
\newcommand{\collar}{\operatorname{\mathbf{collar}}}
\newcommand{\bcollar}{\operatorname{\overline{\mathbf{collar}}}}
\newcommand{\I}{{\mathbf I}}
\newcommand{\tprec}{\prec_t}
\newcommand{\fprec}{\prec_f}
\newcommand{\bprec}{\prec_b}
\newcommand{\pprec}{\prec_p}
\newcommand{\ppreceq}{\preceq_p}
\newcommand{\sprec}{\prec_s}
\newcommand{\cpreceq}{\preceq_c}
\newcommand{\cprec}{\prec_c}
\newcommand{\topprec}{\prec_{\rm top}}
\newcommand{\Topprec}{\prec_{\rm TOP}}
\newcommand{\fsub}{\mathrel{\scriptstyle\searrow}}
\newcommand{\bsub}{\mathrel{\scriptstyle\swarrow}}
\newcommand{\fsubd}{\mathrel{{\scriptstyle\searrow}\kern-1ex^d\kern0.5ex}}
\newcommand{\bsubd}{\mathrel{{\scriptstyle\swarrow}\kern-1.6ex^d\kern0.8ex}}
\newcommand{\fsubeq}{\mathrel{\raise-.7ex\hbox{$\overset{\searrow}{=}$}}}
\newcommand{\bsubeq}{\mathrel{\raise-.7ex\hbox{$\overset{\swarrow}{=}$}}}
\newcommand{\tw}{\operatorname{tw}}
\newcommand{\base}{\operatorname{base}}
\newcommand{\trans}{\operatorname{trans}}
\newcommand{\rest}{|_}
\newcommand{\bbar}{\overline}
\newcommand{\UML}{\operatorname{\UU\MM\LL}}
\renewcommand{\d}{\operatorname{diam}}
\newcommand{\hs}{{\operatorname{hs}}}
\newcommand{\EL}{\mathcal{EL}}
\newcommand{\tsum}{\sideset{}{'}\sum}
\newcommand{\tsh}[1]{\left\{\kern-.9ex\left\{#1\right\}\kern-.9ex\right\}}
\newcommand{\Tsh}[2]{\tsh{#2}_{#1}}
\newcommand{\qeq}{\mathrel{\approx}}
\newcommand{\Qeq}[1]{\mathrel{\approx_{#1}}}
\newcommand{\qle}{\lesssim}
\newcommand{\Qle}[1]{\mathrel{\lesssim_{#1}}}
\newcommand{\simp}{\operatorname{simp}}
\newcommand{\vsucc}{\operatorname{succ}}
\newcommand{\vpred}{\operatorname{pred}}
\newcommand\sleft{_{\text{left}}}
\newcommand\sright{_{\text{right}}}
\newcommand\sbtop{_{\text{top}}}
\newcommand\sbot{_{\text{bot}}}
\newcommand\dpe{d_{pel}}
\newcommand\de{d_{e}}
\newcommand\srr{_{\mathbf r}}
\newcommand\geod{\operatorname{\mathbf g}}
\newcommand\mtorus[1]{\boundary U(#1)}
\newcommand\A{\mathbf A}
\newcommand\Aleft[1]{\A\sleft(#1)}
\newcommand\Aright[1]{\A\sright(#1)}
\newcommand\Atop[1]{\A\sbtop(#1)}
\newcommand\Abot[1]{\A\sbot(#1)}
\newcommand\boundvert{{\boundary_{||}}}
\newcommand\storus[1]{U(#1)}
\newcommand\Momega{\omega_M}
\newcommand\nomega{\omega_\nu}
\newcommand\twist{\operatorname{tw}}
\newcommand\SSSS{{\til{\mathcal S}}}
\newcommand\modl{M_\nu}
\newcommand\MT{{\mathbb T}}
\newcommand\dw{{d_{weld}}}
\newcommand\dt{{d_{te}}}
\newcommand\Teich{{\operatorname{Teich}}}
\renewcommand{\Re}{\operatorname{Re}}
\renewcommand{\Im}{\operatorname{Im}}
\newcommand{\mc}{\mathcal}
\newcommand{\ccs}{{\CC(S)}}
\newcommand{\mtdw}{{(\til{M_T},\dw)}}
\newcommand{\tmtdw}{{(\til{M_T},\dw)}}
\newcommand{\tmldw}{{(\til{M_l},\dw)}}
\newcommand{\sxy}{{\Sigma(x,y)}}
\newcommand{\ts}{{\til{\Sigma}}}
\newcommand{\tsxy}{{\til{\Sigma}(x,y)}}
\newcommand{\tmtdt}{{(\til{M_T},\dt)}}
\newcommand{\tmldt}{{(\til{M_l},\dt)}}
\newcommand{\trvw}{{\tr_{vw}}}
\newcommand{\ttrvw}{{\til{\tr_{vw}}}}
\newcommand{\but}{{\BU(T)}}
\newcommand{\ilkv}{{i(lk(v))}}
\newcommand{\pslc}{{\mathrm{PSL}_2 (\mathbb{C})}}
\newcommand{\tttt}{{\til{\ttt}}}
\newcommand{\bcomment}[1]{\textcolor{blue}{#1}}

\newcommand{\Star}{\operatorname{star}}
\newcommand{\jfmchange}[1]{{\color{purple}{#1}}}

\newcommand{\defstyle}[1]{\textbf{#1}}
\newcommand{\emphstyle}[1]{\emph{#1}}

\title{Cubulating Drilled bundles over graphs}

\author{Mahan Mj}

\address{School of Mathematics, Tata Institute of Fundamental Research, Mumbai-40005, India}
\email{mahan@math.tifr.res.in}
\email{mahan.mj@gmail.com}

\author{Biswajit Nag}

\address{School of Mathematics, Tata Institute of Fundamental Research, Mumbai-40005, India}
\email{biswajit@math.tifr.res.in}

\subjclass[2010]{ 20F65, 20F67 (Primary), 22E40, 57M50}
\keywords{CAT(0) cube complex,  surface bundle over graph, relative hyperbolicity, relatively quasiconvex hierarchy}

\date{\today}

\thanks{Both authors were  supported by  the Department of Atomic Energy, Government of India, under project no.12-R\&D-TFR-5.01-0500 as also  by an endowment of the Infosys Foundation.
	MM was also supported in part by a DST JC Bose Fellowship. MM was  supported in part by  the Institut Henri Poincare  (UAR 839 CNRS-Sorbonne Universite), LabEx CARMIN, ANR-10-LABX-59-01, during his participation in the trimester program "Groups acting on fractals, Hyperbolicity and Self-similarity", April-June 2022.} 

\begin{abstract} 
	We start with a Gromov-hyperbolic surface bundle $E$ over a graph, and drill out essential simple closed curves from fibers
	to obtain a drilled bundle $F$. We prove that for such  drilled bundles $F$, the fundamental group $\pi_1(F)$ is  relatively hyperbolic with
	$(\Z\oplus \Z)$ peripheral groups.  Combining the relative hyperbolicity of $\pi_1(F)$ thus obtained with a theorem of Wise, we establish 
	virtually special cubulability of $\pi_1(F)$ provided that  the maximal undrilled subbundles of $F$ are cubulable.
\end{abstract}

\maketitle

\tableofcontents

\section{Introduction}\label{sec-intro} In \cite{mmscubulating}, Manning-Mj-Sageev investigated the following question:

\begin{qn}\label{qn-mms}
Let $1\to H \to G \to F_n \to 1$ be an exact sequence of hyperbolic groups with $F_n$ the free group on $n$ generators. Is $G$ virtually special cubulable?
\end{qn}
 They   gave sufficient conditions guaranteeing an affirmative answer. Surface-by-free groups
 as in Question~\ref{qn-mms} naturally arise as fundamental groups of surface bundles $E$ over graphs $\GG$. The main contribution of \cite{mmscubulating} was an explicit  construction of a codimension one quasiconvex subgroup of $G$ along which $G$ splits. Wise's quasiconvex hierarchy theorem \cite{wise-hier} then furnishes cubulability of such groups. When $\GG$ is a circle, Question~\ref{qn-mms}
 has an affirmative answer thanks to Kahn-Markovic's work on the surface subgroup problem  \cite{km-surf}, Bergeron-Wise's cubulability result
  \cite{bw}, and Agol's theorem \cite{agol-vh}. In this case,
 $E$ is a hyperbolic 3-manifold $M$ fibering over the circle.
 The existence of an embedded quasiconvex  surface in such an $M$ is thus not guaranteed when the first betti number of $M$ is one. On the other hand, for
 hyperbolic 3-manifolds with toral boundary components, the construction of embedded geometrically finite surfaces is much easier. In particular, if one drills out a simple closed curve $\sigma$ from a fiber $S$ of a fibered manifold $M$ as above, then $M \setminus \sigma$ admits a complete hyperbolic structure
 by Thurston's theorem \cite[Ch. 15]{kapovich-book}, and each component of
 $S \setminus \sigma$ is geometrically finite. 
 In this paper, we shall adopt the point of view that a surface bundle  $E$ 
 over  a graph $\GG$  generalizes  hyperbolic 3-manifolds  fibering over the circle.
  
Here, the main objects of study will be drilled surface bundles over graphs, where drilling corresponds to removing open neighborhoods of simple closed curves in fibers. More precisely: 

\begin{defn}\label{def-drillbdl}
 Let $\GG$ be a connected graph, thought of as a $1$--complex, and consider a bundle $\Pi: E\to \GG$ with fiber $S$ a surface. We refer to $\Pi: E\to \GG$ as a \emph{surface bundle over the graph} $\GG$.
 The fiber $\Pi^{-1}(x)$ over $x \in \GG$ will be denoted by $S_x$. If $x$ is a vertex of $\GG$, $S_x$ will be called a \emph{singular fiber}, else it will be called a \emph{regular fiber}.
 
 Let $\sigma_i \subset {S_{x_i}}$ be a finite collection of essential simple closed curves in  regular fibers ${S_{x_i}}$, so that for each regular
 fiber $S_x$, the collection of simple closed curves contained in $S_x$ are disjoint. Let $\{{N_\ep(\sigma_i)}\}$ be a collection of 
 small open tubular neighborhoods, missing singular fibers. We assume that 
 \begin{enumerate}
\item the closures  $\{\bbar{N_\ep(\sigma_i)}\}$ are disjoint,
\item for $i\neq j$, $\sigma_i, \sigma_j$ are not freely homotopic in $E$, nor is there a nontrivial free homotopy between  $\sigma_i$ and itself. This can equivalently be described as follows. Let $F=(E \setminus \cup_i N_\ep(\sigma_i))$.
Then any $\pi_1-$injective smooth map $(S^1 \times [0,1], S^1\times \{0,1\}) \to (F, \, \cup_i \partial N_\ep(\sigma_i)))$ is homotopic, rel. boundary, into $\partial N_\ep(\sigma_i)$ for some  $i$.
 \end{enumerate}
 The complement  $F=(E \setminus \cup_i N_\ep(\sigma_i))$  will be referred to as a  \emph{drilled surface bundle over a graph}. 
 
\end{defn}
Each torus $\{\partial{N_\ep(\sigma_i)}\}$ will be referred to as a \emph{boundary torus} of $F$, and denoted as $T_i$. The union  $\cup_i T_i$ will be called the \emph{boundary} of $F$. The surfaces ${S_{x_i}}$ (containing some $\sigma_i$) will be called \emph{drilled surfaces} and the curves $\sigma_i$ will be called \emph{drilled curves}. The points ${x_i} \in \GG$ will be called \emph{drilled points}. 
 An  edge $e$ containing a drilled point will be called a \emph{drilled edge}.
 A  $\pi_1-$injective smooth map $(S^1 \times [0,1], S^1\times \{0,1\}) \to (F, \, \cup_i \partial N_\ep(\sigma_i)))$ will be referred to as an \emph{essential annulus.} The second condition in Definition~\ref{def-drillbdl} thus says that
 $F$ has no essential annuli.
 
We shall be particularly interested in the case that $\GG$ is finite, and
$\Gamma=\pi_1(E)$ is Gromov hyperbolic. In this special case, we note the further restriction on drilled curves that follows from 
the second condition in Definition~\ref{def-drillbdl}. Let $E_S$ be the cover of $E$ corresponding to $\pi_1(S)$. Then we have:

\begin{condn}\label{cond-hyp}
Let $\theta_1, \theta_2$ be any two distinct elevations of the drilled curves $\sigma_i \subset E$ to $E_S$. Then $\theta_1, \theta_2$ are not  freely homotopic in $E_S$ in the complement of other elevated curves. Equivalently, $E_S$ has
\emph{no essential annuli}.
\end{condn}

Condition~\ref{cond-hyp} follows from item 2  in Definition~\ref{def-drillbdl}, viz.\  that $E$ has no essential annuli. However, we make this explicit as much of this paper will involve $E_S$. 

The bulk of this paper goes into establishing 
strong relative hyperbolicity of fundamental groups of drilled bundles (see Theorem~\ref{thm-relhyp}):

\begin{theorem}\label{thm-relhyp-intro} Let $E$ be a surface bundle over a graph $\GG$ such that 
 $\Gamma=\pi_1(E)$ is hyperbolic. Let $F$ be a drilled surface bundle over  $\GG$ obtained by drilling $E$.
  Then $G(=\pi_1(F))$ is strongly hyperbolic relative to the collection of peripheral subgroups $\{P_i = \pi_1(\partial N_\ep(\sigma_i))\}$.
\end{theorem}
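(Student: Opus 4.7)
My plan is to prove Theorem~\ref{thm-relhyp-intro} by constructing the Groves--Manning combinatorial cusped space $X$ of the pair $(G, \{P_i\})$ and showing $X$ is Gromov hyperbolic; by the Groves--Manning characterization this is equivalent to strong relative hyperbolicity of $(G, \{P_i\})$. Here $X$ is obtained from $\tilde F$ by attaching a combinatorial horoball along each lift of each boundary torus $T_i$; each such lift is a plane $\R^2$ stabilized by a conjugate of $P_i \cong \Z \oplus \Z$ in $G$. The two principal geometric inputs are Gromov hyperbolicity of $\tilde E$ and the no-essential-annuli conditions from Definition~\ref{def-drillbdl}(2) and Condition~\ref{cond-hyp}.

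As a conceptual warm-up I would first establish that $\Gamma = \pi_1(E)$ is hyperbolic relative to the cyclic subgroups $\{\langle \sigma_i \rangle\}$. Quasi-convexity of each $\langle \sigma_i \rangle$ in the hyperbolic group $\Gamma$ is automatic (axes of hyperbolic elements), and almost malnormality of the family follows from the no-essential-annuli condition: two parallel elevations would descend to an essential annulus. By the Farb--Bowditch criterion, the ``reference'' cusped space $X'$ obtained from $\tilde E$ by attaching combinatorial horoballs to the elevations $\tilde \sigma_i$ is Gromov hyperbolic, encoding the relative hyperbolicity of $\Gamma$.

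Next I would exploit a natural $G$-equivariant map $p : X \to X'$ coming from the surjection $G \twoheadrightarrow \Gamma$: on the bulk, $p$ restricts to the $N$-cover $\tilde F \to \tilde E \setminus \bigcup N_\ep(\tilde \sigma_i)$, where $N = \ker(G \twoheadrightarrow \Gamma)$ is normally generated by the meridians $\mu_i$; on each $\R^2$-horoball, $p$ quotients by the meridian $\Z$-direction and maps to the corresponding $\R$-horoball of $X'$. Local geometry is under control on both sides because combinatorial horoballs in the sense of Groves--Manning are Gromov hyperbolic with universal constants, independent of the dimension of the base set.

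The main obstacle is that $p$ is not a quasi-isometry (it is an infinite-to-one cover on the bulk and collapses a $\Z$-direction on each horoball), so hyperbolicity of $X'$ does not transfer through $p$ formally. My plan is instead to verify thin triangles in $X$ directly, using $p$ as a guide. A geodesic side in $X$ decomposes into bulk excursions in $\tilde F$ and horoball excursions; bulk excursions project to quasi-geodesics in the hyperbolic space $\tilde E$, and horoball excursions are governed by the explicit combinatorial horoball metric. The most delicate step is controlling the transitions between bulk and horoball, and in particular ruling out oscillations of a geodesic between two distinct peripheral horoballs. I expect to handle this using almost malnormality of $\{P_i\}$ in $G$, which follows from the no-essential-annuli condition in $F$ (Definition~\ref{def-drillbdl}(2)): distinct peripheral planes in $\tilde F$ are uniformly non-parallel, so excursions between them cost definite length. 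Once uniform thinness of triangles in $X$ is established, the Groves--Manning criterion delivers strong relative hyperbolicity of $(G, \{P_i\})$ and completes the proof.
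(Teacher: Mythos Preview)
Your proposal identifies a reasonable target (hyperbolicity of the Groves--Manning cusped space) and correctly handles the warm-up that $\Gamma$ is hyperbolic relative to the cyclic subgroups $\langle\sigma_i\rangle$ (this is Lemma~\ref{lem-rh-bb} in the paper). But the core step has a genuine gap.

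The problem is the assertion that ``bulk excursions project to quasi-geodesics in the hyperbolic space $\til E$.'' This is not established and is exactly where the difficulty lives. The map $p$ restricted to $\til F$ is the quotient by the infinite normal subgroup $N$ generated by meridians; two points of $\til F$ differing by a large element of $N$ have the same image in $\til E$, so a $\til F$-geodesic between them projects to a long loop, not a quasi-geodesic. Even for a geodesic in the cusped space $X$, a bulk excursion can run for a long time nearly parallel to a peripheral plane (wrapping in the meridional direction) before entering the horoball, and its $p$-image then backtracks along the axis $\til\sigma_i$ in $\til E$. Almost malnormality of $\{P_i\}$ only says distinct peripheral planes diverge; it gives no control over how a single geodesic interacts with one plane, and it does not by itself bound the thinness of triangles whose sides wrap. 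The sentence ``I expect to handle this using almost malnormality'' is precisely where the real content is missing; as written it is not a strategy but a hope.

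For comparison, the paper does not use the projection $p$ at all. It first equips $\til F$ with a \emph{partially electrified} pseudometric $\dpe$ (collapsing only the meridional circle of each peripheral torus) and proves $(\til F,\dpe)$ is Gromov hyperbolic via the Bestvina--Feighn combination theorem (Theorem~\ref{thm-tilfdpelhyp}); the substance here is a classification of essential annuli in $F_S$ into non-backtracking, wrapping, and backtracking types (Section~\ref{sec-essentialannuli}) together with a case-by-case verification of the annuli-flare condition. It then builds an explicit family of electro-ambient paths and checks Sisto's guessing-geodesics criterion (Theorem~\ref{thm-sisto}), the key technical input being the stability result Proposition~\ref{prop-stab}. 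The flare analysis and the stability of the path family are the load-bearing steps, and neither has an analogue in your outline; if you want to push your approach through, you would need a replacement for them that controls wrapping of $X$-geodesics around peripheral planes, and that is not something almost malnormality alone can supply.
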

 
 The drilling operation  in this paper and Theorem~\ref{thm-relhyp-intro} is  motivated, in part, by the drilling of simple closed geodesics from closed hyperbolic 3-manifolds \cite[Chapter 4]{thurstonnotes}. We also refer to  \cite{beleg-hruska,beleg,ghmosw} for related, but different, drilling 
 constructions: in the first two, totally geodesic codimension 2 submanifolds are drilled out of CAT(-1) manifolds, and in the last, closed geodesics are drilled out of hyperbolic $PD(3)$ groups. 
 
 With Theorem~\ref{thm-relhyp-intro} in place, we can cut $F$ along the components
 $K$ of $S\setminus \cup_i \sigma_i$. It turns out that the fundamental group
 $\pi_1(K)$ of each such component $K$ is relatively quasiconvex in $G$
 (see Proposition~\ref{prop-relqc}). We can now apply Wise's relatively hyperbolic version of the quasiconvex hierarchy theorem \cite[Theorem 15.1]{wise-hier}. To state
 the main theorem of this paper, a bit more terminology needs to be set up.
 Let $\KK_1, \cdots, \KK_n$ denote maximal subgraphs of $\GG$ that contain 
 no points $x$ such that the fiber $S_x$ is drilled. We refer to the 
 $\KK_i$'s as undrilled components of $\GG$. The restrictions 
 $E(\KK_1), \cdots, E(\KK_n)$ will be termed  undrilled constituents of $F$
 (note that each such $E(\KK_n)$ is naturally contained in $F$). The main theorem
 of this paper is the following (see Theorem~\ref{thm-cub}).
 
 \begin{theorem}\label{thm-cub-intro}
 If for each  undrilled constituent $E(\KK)$ of $F$, $\pi_1(E(\KK))$ is
 cubulable virtually special, then so is $G=\pi_1(F)$.
 \end{theorem}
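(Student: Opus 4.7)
The plan is to realize $G=\pi_1(F)$ as the fundamental group of a finite graph of groups whose edge groups are relatively quasiconvex and whose vertex groups are the undrilled-constituent groups $\pi_1(E(\KK_j))$, and then invoke the relatively hyperbolic version of Wise's Quasiconvex Hierarchy Theorem \cite{wise-hier}. By Theorem~\ref{thm-relhyp-intro}, $G$ is hyperbolic relative to the $\Z \oplus \Z$ peripheral subgroups $\{P_i\}$; since $\Z \oplus \Z$ is virtually abelian, $G$ lies in the hypothesis class of Wise's theorem.

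Next, I would exhibit the splitting of $G$. Cutting $F$ along the cut-open drilled fibers $S_{x_i}\setminus \bigcup_k N_\ep(\sigma_k)$ separates $F$ into the undrilled constituents $E(\KK_1), \dots, E(\KK_n)$. The resulting graph-of-spaces structure on $F$ translates, via Bass--Serre theory, into a graph-of-groups decomposition of $G$ whose vertex groups are the $\pi_1(E(\KK_j))$ and whose edge groups are the $\pi_1(K)$, as $K$ ranges over connected components of cut-open drilled fibers. Each such $K$ is a compact surface with boundary, so each $\pi_1(K)$ is a finitely generated free group. By Proposition~\ref{prop-relqc}, each edge group $\pi_1(K)$ is relatively quasiconvex in $G$. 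Unfolding the graph of groups into an iterative sequence of amalgams and HNN extensions, one edge at a time, then produces a relatively quasiconvex hierarchy of $G$ whose terminal groups are exactly the $\pi_1(E(\KK_j))$.

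By hypothesis, each terminal group $\pi_1(E(\KK_j))$ is cubulable virtually special. Applying Wise's Quasiconvex Hierarchy Theorem in its relatively hyperbolic formulation \cite{wise-hier} delivers that $G$ is itself virtually compact special, and hence cubulable virtually special as claimed.

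The main obstacle will be bookkeeping rather than fresh substance: one must verify that each intermediate splitting in the unfolded hierarchy (and not merely the top-level splitting) remains a splitting along a relatively quasiconvex subgroup, that the induced peripheral structure on each vertex group $\pi_1(E(\KK_j))$ coming from its embedding in the relatively hyperbolic group $G$ is compatible with the "cubulable virtually special" hypothesis on that vertex group, and that the $\Z \oplus \Z$ peripheral subgroups of $G$ satisfy the malnormality / almost-malnormality requirements of Wise's theorem (which should follow automatically from strong relative hyperbolicity together with condition 2 of Definition~\ref{def-drillbdl}, i.e., the absence of essential annuli). Once these verifications are in place, Wise's theorem delivers the conclusion directly.
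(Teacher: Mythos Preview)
Your approach is essentially the same as the paper's: both cut $F$ along the components $K$ of the drilled fibers $S_{x_i}\setminus\bigcup_k\sigma_k$, obtain a graph-of-groups decomposition with vertex groups $\pi_1(E(\KK_j))$ and edge groups $\pi_1(K)$, invoke Theorem~\ref{thm-relhyp-intro} for relative hyperbolicity and Proposition~\ref{prop-relqc} for relative quasiconvexity of the edge groups, and then appeal to Wise.

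Two points of divergence are worth noting. First, your unfolding into an iterative hierarchy is unnecessary: the paper applies Wise's theorem in the form \cite[Theorem~15.1]{wise-hier}, which takes as input a \emph{single} graph-of-groups splitting with relatively quasiconvex edge groups and virtually special vertex groups, so the bookkeeping you anticipate about intermediate splittings does not arise. Second, the condition you flag at the end (malnormality/compatibility of peripherals) is exactly what the paper isolates as Condition~(4) of Wise's theorem: one must check that each edge group $\pi_1(K)$ meets every $\Z^2$ peripheral of $G$ trivially, i.e.\ that no non-peripheral essential curve in $K$ is freely homotopic in $F$ to a drilled curve. The paper handles this not via Definition~\ref{def-drillbdl}(2) alone, but by first passing to a \emph{reduced} collection of drilled fibers (Definition~\ref{def-reduced}): one isotopes the drilled curves so that the multicurve on each drilled fiber is maximal, which rules out accidental parabolics. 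You should incorporate this reduction step; without it Condition~(4) can genuinely fail.
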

 
 Thus, if Question~\ref{qn-mms} has a positive answer (the undrilled case), then
 for the drilled groups $G$, cubulability follows. However, even in the absence
 of a definitive answer to  Question~\ref{qn-mms}, Theorem~\ref{thm-cub-intro}
 furnishes a number of examples, as given below:
 \begin{enumerate}
 \item when each edge of $\GG$ contains an $x$ such that $S_x$ is drilled (Example~\ref{eg-contratcible}), 
 \item when  undrilled components of $\GG$ are either contractible or homotopy equivalent to a circle (Example~\ref{eg-3mfld}),
 \item  when  undrilled constituents of $F$ satisfy the sufficient conditions in \cite{mmscubulating} (Example~\ref{eg-mms}). 
 \end{enumerate}
 
 Finally, we use Theorem~\ref{thm-cub-intro} in conjunction with Kielak's theorem \cite{kielak} to deduce that the cubulable virtually special groups 
 virtually algebraically fiber (see Theorem~\ref{thm-vfib}). 
 
 \begin{theorem}\label{thm-vfib-intro}
 	Let $F$ be a drilled surface bundle over a finite graph $\GG$ satisfying the hypotheses of Theorem~\ref{thm-cub-intro}. Then $G=\pi_1(F)$ virtually algebraically fibers. 
 \end{theorem}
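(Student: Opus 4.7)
The plan is to apply Kielak's virtual algebraic fibering criterion \cite{kielak}, which asserts that a finitely generated virtually RFRS group $G$ virtually algebraically fibers if and only if $b_1^{(2)}(G)=0$. The two inputs required are (i) virtual RFRS of $G=\pi_1(F)$ and (ii) vanishing of its first $L^2$-Betti number.

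For (i), I would invoke Theorem~\ref{thm-cub-intro}: $G$ is cubulable virtually special, so by the Haglund--Wise embedding theorem a finite-index subgroup of $G$ embeds as a quasiconvex subgroup of a right-angled Artin group $A$. Since $A$ is RFRS (Agol, Haglund--Wise) and the RFRS property is inherited by subgroups, $G$ is virtually RFRS.

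For (ii), the main technical obstacle, I would use Theorem~\ref{thm-relhyp-intro}: $G$ is hyperbolic relative to $\mathbb{Z}\oplus\mathbb{Z}$ peripherals $P_i=\pi_1(T_i)$, each of which is amenable and so has all $L^2$-Betti numbers equal to zero. Applying a formula for $L^2$-Betti numbers of relatively hyperbolic groups with amenable peripherals (via relative $L^2$-cohomology, or Mayer--Vietoris applied to the pair $(F,\bigcup_i T_i)$ in a suitable torsion-free finite cover), the peripheral contributions vanish; combined with the geometric structure of $F$, which resembles a cusped hyperbolic $3$-manifold (whose first $L^2$-Betti number vanishes by Lott--L\"uck) away from vertices of $\GG$ of valence different from $2$, this should yield $b_1^{(2)}(G)=0$.

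Combining (i) and (ii), Kielak's theorem gives that $G$ virtually algebraically fibers. The bulk of the work is concentrated in step (ii): while the RFRS half is a routine consequence of the special cubulation supplied by Theorem~\ref{thm-cub-intro}, the vanishing of $b_1^{(2)}$ requires either invoking a suitable general theorem for relatively hyperbolic groups with amenable peripherals or handling directly the possible non-manifold behaviour of $F$ at high-valence vertices of $\GG$.
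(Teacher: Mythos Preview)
Your overall plan---invoke Kielak's criterion, use the virtually special cubulation from Theorem~\ref{thm-cub-intro} for the RFRS/cube-complex input, and then establish $\beta_1^{(2)}(G)=0$---is exactly the paper's strategy, and step (i) is fine. The gap is in step (ii).

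The relative hyperbolicity angle is a dead end: there is no general theorem saying that a group hyperbolic relative to $\Z^2$ subgroups has vanishing first $\ell^2$-Betti number, and the statement is false as stated. For instance, $\Z^2 * \Z^2$ is hyperbolic relative to its two $\Z^2$ factors, yet by the free-product formula $\beta_1^{(2)}(\Z^2 * \Z^2)=1$. So the peripheral structure alone cannot carry this argument; you need to use the specific topology of $F$.

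The paper does this as follows, and it is close in spirit to your Mayer--Vietoris remark but makes it precise via the Fern\'os--Valette theorem (if $G$ is a graph of groups with all vertex groups satisfying $\beta_1^{(2)}=0$, all edge groups infinite and of infinite index in the adjacent vertex groups, then $\beta_1^{(2)}(G)=0$). First, choose a maximal subgraph $\GG_0\subset\GG$ homotopy equivalent to a circle; the restriction $F_0=F|_{\GG_0}$ is a genuine $3$-manifold with only toral boundary, hence $\chi(F_0)=0$, and Lott--L\"uck gives $\beta_1^{(2)}(\pi_1 F_0)=0$. This is where Lott--L\"uck actually enters, and it is the key observation handling the ``non-manifold behaviour at high-valence vertices'' you flagged: one absorbs those vertices into a subgraph over which $F$ \emph{is} a $3$-manifold. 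Second, the remaining edges of $\GG$ are attached by amalgamating or HNN-extending over $\pi_1(S)$ (and, after subdividing drilled edges, over fundamental groups of elementary drilled atoms, which themselves are graphs of groups with $\Z^2$ vertex groups, $\Z$ edge groups, and a surface-group root). Each such step satisfies the Fern\'os--Valette hypotheses, so $\beta_1^{(2)}$ remains zero by induction. Note in particular that Proposition~\ref{prop-l2} holds for \emph{every} drilled surface bundle $F$, with no hyperbolicity or cubulability assumption.
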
 
 
 To prove Theorem~\ref{thm-vfib-intro}, we  first establish that for any drilled surface bundle $F$ over a graph,
 $\pi_1(F)$ has vanishing first $l^2$ betti number (Proposition~\ref{prop-l2}). 
 This is done using work of Lott-Lueck \cite{lott-luecke} and Fernos-Valette \cite{fv-hha}
 
 \section{Bundles, drilled bundles, graphs of groups} \label{sec-prel-bdl}
Consider the exact sequence 
\begin{equation}\label{eq-ses}
1 \to H \to \G \to Q \to 1,
\end{equation}
 where $H= \pi_1(S)$, and $Q = \pi_1(\GG)$.
 Note that $\G$ has a graph of groups structure, where each edge and vertex group equals $H$
 and all edge-to-vertex maps are isomorphisms. Also, $E$ has a 
  graph of spaces structure, where each edge and vertex space equals $S$, and all edge-to-vertex maps are homeomorphisms.
 A description of $\pi_1E$ in general may then be given as follows (see \cite[Section 2]{mmscubulating} for instance).  Choose a 
 maximal tree $T\subset \GG$.  Assume, without loss of generality, that for any edge $e\subset T$, the gluing maps $f_e^-$ are  identity maps on $S$.  Let $e_1,\ldots,e_n$ be the edges in $\GG\setminus T$.  For each $i\in \{1,\ldots,n\}$, denote $f_i = f_{e_i}^+$. Also, set $\phi_i = (f_i)_*: \pi_1S\to \pi_1S$.  Then $\pi_1E$ is given by:
	\[ \pi_1E \cong \langle \pi_1S,t_1,\ldots,t_n\mid t_i^{-1} st_i=\phi_i(s),\forall s\in \pi_1S, i\in\{1,\ldots,n\}\rangle.\]\\

	\noindent {\bf Hyperbolic $\G$:}$  $\\	
	It was shown by Hamenstadt \cite{hamen} (see also \cite{kl-coco,mahan-sardar}) that,
	in the exact sequence~\eqref{eq-ses}, $\Gamma$ is hyperbolic if and only if $Q$ is a convex cocompact subgroup of the Mapping Class Group in the sense of Farb-Mosher \cite{farb-coco}. A useful fact that we will need is the following Scott-Swarup type theorem \cite{scottswar} due to
	Dowdall-Kent-Leininger \cite[Theorem 1.3]{kld-coco} (see \cite{mahan-rafi} for a different proof of the same result).
	
	\begin{theorem}\label{kld}
	Let
		\[
		1 \rightarrow H \rightarrow \Gamma \rightarrow Q \rightarrow 1
		\]
	be an exact sequence as above, so that $\Gamma$ is hyperbolic, and $H=\pi_1(S)$.
			Then any finitely generated infinite index subgroup of \(H\) is quasiconvex in \(\Gamma\).
		\end{theorem}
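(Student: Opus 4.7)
The plan is to construct a $K$-equivariant \emph{ladder} $\LL_K \subset \til E$ whose intersection with each fiber of $\til E$ is a quasiconvex subset of that fiber with limit set equal to $\Lambda(K)$, and then to verify that $\LL_K$ is both quasi-isometric to $K$ and quasiconvex inside $\til E$. Once the ladder is established, quasiconvexity of $K$ in $\Gamma$ follows immediately from the fact that a $K$-orbit sits uniformly close to $\LL_K$.

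The first step is a surface-theoretic reduction. By Scott's core theorem, a finitely generated infinite index subgroup $K \le H = \pi_1(S)$ is conjugate into $\pi_1(Y)$ for some compact essential proper subsurface $Y \subsetneq S$, possibly after passing to a finite cover. In particular $K$ is quasiconvex in $H$, with $K$-invariant quasiconvex set $\til Y \subset \til S$ and proper limit set $\Lambda(K) \subsetneq \partial \til S$.

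Next, the hyperbolicity of $\Gamma$ must be translated into mapping class group data. By Hamenstadt's theorem (together with Farb-Mosher), hyperbolicity of $\Gamma$ is equivalent to $Q = \pi_1(\GG)$ being convex cocompact in $\operatorname{Mod}(S)$. Via the Kent-Leininger characterization, this is in turn equivalent to the assertion that for every proper essential subsurface $W \subset S$, subsurface projections of any $Q$-orbit to $\CC(W)$ are uniformly bounded. Assemble $\LL_K$ by choosing, in each fiber of $\til E$, an appropriate coset translate of $\til Y$; a Mitra-style flaring/combination argument then shows that $\LL_K$ is quasi-isometric to the Cayley graph of $K$.

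To verify that $\LL_K$ is quasiconvex in $\til E$, I would take a $\til E$-geodesic $\gamma$ joining two points of $\LL_K$. If $\gamma$ strays far from $\LL_K$, then its image in some fiber must witness a large subsurface projection to $\CC(W)$ for some proper subsurface $W$ not nested inside $Y$. The Bounded Geodesic Image Theorem then forces the Masur-Minsky hierarchy path in $\CC(S)$ associated to the $Q$-projection of $\gamma$ to pass through a uniform neighborhood of $\partial W$, contradicting the uniform bound on $Q$-orbit projections from the previous step. The hard part will be constructing a sufficiently rigid coarse geometric model for $\til E$ compatible with both the convex cocompact $Q$-action on $\operatorname{Mod}(S)$ and the fiberwise geometry of $\til S$; without such a model, the translation between subsurface projection bounds in the mapping class group and actual metric distortion in $\til E$ is unavailable. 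This geometric model—and the verification that $\LL_K$ embeds quasi-isometrically into it—is where the convex cocompactness of $Q$ is most essentially used, and constitutes the technical core of the argument.
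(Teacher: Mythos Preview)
The paper does not supply its own proof of this result; it is quoted from Dowdall--Kent--Leininger \cite[Theorem 1.3]{kld-coco}, with an alternative proof referenced in \cite{mahan-rafi}. There is thus no in-paper argument to compare your proposal against.

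Regarding your plan, there are two concrete gaps. First, the surface-theoretic reduction is mis-stated: Scott's core theorem yields a compact core in the $K$-cover of $S$, not an embedded proper subsurface of $S$. To realize $K$ as $\pi_1(Y)$ for an embedded $Y \subsetneq S'$ you need Scott's LERF theorem and a passage to a finite cover $S'$, after which you must still arrange a $\Gamma$-normal finite-index subgroup of $H$ so that the exact sequence persists; none of this is automatic, and the cited proofs do not make this reduction at all---they treat an arbitrary finitely generated infinite-index $K \le H$ directly, using only that such $K$ is quasiconvex in $H$ with $\Lambda(K) \subsetneq \partial H$. Second---and you concede this yourself---the entire technical content lies in building the coarse model for $\til E$ and verifying quasiconvexity of the ladder, neither of which you carry out. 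Your inference ``$\gamma$ strays far from $\LL_K$ $\Rightarrow$ large subsurface projection to some $\CC(W)$'' is precisely the statement that needs proof; it does not follow from BGIT alone, and the translation between metric distortion in $\til E$ and curve-complex data is the theorem, not a tool you can invoke. The mechanism in the cited proofs is different in flavor: they control how $\Lambda(K)$ interacts with the ending laminations determined by the convex cocompact $Q$-action, showing that no leaf of such a lamination has both endpoints in $\Lambda(K)$, and deduce quasiconvexity from that.
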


		\subsection*{A graph of groups structure on $G=\pi_1(F)$:} $  $\\	
	We now describe a  standard graph of groups description for $\pi_1(F)$. We shall denote
	$G:=\pi_1(F)$. 
	 The  graph in question will be a kind of a dual graph $\GGs$ whose vertex set $\VV(\GG^*)$  will be
	 \begin{enumerate}
	 \item  edges $e$ of $\GG$, and
	 \item vertices $v$ of $\GG$.
	 \end{enumerate} 
	 The edge set $\EE(\GGs)$
	 of $\GGs$ is given by incidences between  edges $e$ and vertices $v$ of $\GG$, i.e.\ if
	the terminal vertex $e^+$ of $e\in \GG$  is $v$, then we introduce an edge in $\GGs$ between $e$ and $v$. Similarly for $e^-$.
	 
	  Then $G(=\pi_1(F))$ has a graph of 
	groups structure, where
	\begin{enumerate}
		\item the underlying graph is $\GGs$,
		\item the edge groups are all equal to $H=\pi_1(S)$,
	\item the vertex groups $G_e$ (indexed by $e \in \GG$) are given as follows. Since the bundle over the interior of ${e} \subset \GG$ is $S \times (0,1)$, $G_e$ equals $\pi_1(M_e)$, where $M_e$ is a possibly drilled copy of $S \times [0,1]$.
\item in particular,  the edge-group to vertex-group maps are injective.
	\end{enumerate} 
	
A convenient way to think of the graph of spaces associated to the above graph of groups description is as follows. For any edge $e \in \GG$, the corresponding vertex space over $e\in \GG^*$ is $M_e$, the possibly drilled copy of $S \times [0,1]$. For any vertex $v \in \GG$, the corresponding vertex space over $v\in \GG^*$ is $S \times \Star(v)$, where $\Star(v)$ denotes a small closed neighborhood of $v$ in $\GG$. The edge spaces are all given by $S$ and edge-to-vertex space inclusions are given by the inclusions of $S$ into either a 
drilled copy of $S \times [0,1]$ or a copy of $S \times \Star(v)$.

Let $\PP_i = \pi_1(T_i)= \Z+\Z$. Then there exists $e$ such that $\PP_i \subset 
G_e=\pi_1(M_e)$. Whenever we are dealing with $G$ equipped with the above graph of groups structure, we shall re-index the collections $\{T_i\}$ and $\{\PP_i\}$ so that the collection of tori contained in $M_e$ are indexed as $\{T_e^j\}$. Similarly, at the level of fundamental groups, the collection of $\{\PP_i\}$ contained in $G_e$ 
are indexed as $\{\PP_e^j\}$. 

\begin{lemma}\label{lem-rh-bb}
Suppose that $\Gamma$ is hyperbolic. Let $\sigma_i$ denote a finite collection of curves that are drilled from $E$ to obtain $F$. Let $\QQ_i \subset \Gamma$ denote the (conjugacy class of the) cyclic subgroup corresponding to $\sigma_i$. 
Then  $\Gamma$ is strongly hyperbolic relative to the collection $\{\QQ_i\}$.
\end{lemma}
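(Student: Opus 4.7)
My plan is to apply the standard hyperbolic-to-relatively-hyperbolic criterion (due to Bowditch): if $\Gamma$ is word-hyperbolic and $\{H_i\}$ is a finite, almost malnormal collection of quasiconvex subgroups, then $\Gamma$ is strongly hyperbolic relative to $\{H_i\}$. Quasiconvexity of each $\QQ_i=\langle\gamma_i\rangle$ is immediate: $\Gamma$ is hyperbolic and $\gamma_i$ has infinite order, so $\gamma_i$ is loxodromic and $\QQ_i$ is an undistorted cyclic subgroup.

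The real content lies in establishing that the finite collection $\{\QQ_i\}$ is almost malnormal in $\Gamma$. I would argue by contradiction: assume $g\QQ_ig^{-1}\cap\QQ_j$ is infinite for some $g\in\Gamma$ without having both $i=j$ and $g\in\QQ_i$. Then $g\gamma_i^n g^{-1}=\gamma_j^m$ for some nonzero $n,m\in\Z$. This algebraic relation realizes itself as a $\pi_1$-injective cylinder $\Phi\colon S^1\times[0,1]\to E$ with boundary circles $\sigma_i^n$ and $\sigma_j^m$. Push the boundary circles just off the cores of $N_\ep(\sigma_i)$ and $N_\ep(\sigma_j)$ onto $T_i$ and $T_j$, perturb $\Phi$ to be transverse to $\bigcup_k\bbar{N_\ep(\sigma_k)}$, and cut $\Phi$ along the preimage of $\bigcup_k T_k$. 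After standard innermost-curve reductions in the planar domain $S^1\times[0,1]$, this yields a chain of $\pi_1$-injective sub-annuli, alternately in $F$ (with boundary on $\bigcup_k T_k$) and in the solid tori $N_\ep(\sigma_k)$. Item~2 of Definition~\ref{def-drillbdl} forces every sub-annulus in $F$ to be homotopic rel boundary into a single torus, while any essential annulus in a solid torus is boundary-parallel. Reassembling these homotopies along $[0,1]$ and using disjointness of the $\{\bbar{N_\ep(\sigma_k)}\}$, the whole cylinder $\Phi$ is homotopic rel boundary into one torus $T_k$. Since the boundary circles of $\Phi$ are pinned to $T_i$ and $T_j$, this forces $T_i=T_k=T_j$, i.e.\ $i=j=k$; and then the element $g$ lands in the image of $\pi_1(T_k)\to\Gamma$, which is precisely $\QQ_k=\QQ_i$ (the meridian dies in $E$, and only the longitude $\gamma_k$ survives), contradicting our assumption.

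I expect the reassembly step to be the main obstacle: one must check carefully that the chain of annular pieces, each individually pushed into \emph{some} torus, composes to a single homotopy of all of $\Phi$ into one \emph{fixed} torus, rather than a chain of several distinct tori connected by essential arcs. This is a bookkeeping argument using the disjointness of the drilled tubes, $\pi_1$-injectivity of each sub-annulus, and the connectedness of $S^1\times[0,1]$; it is in spirit the cut-and-paste annulus argument familiar from JSJ/characteristic-submanifold theory in $3$-manifold topology. Once almost malnormality of $\{\QQ_i\}$ is established, Bowditch's criterion delivers the claimed strong relative hyperbolicity of $\Gamma$.
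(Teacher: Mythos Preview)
Your proposal is correct and shares the paper's endgame: apply Bowditch's criterion that a hyperbolic group is strongly relatively hyperbolic with respect to any almost malnormal quasiconvex family. The route to malnormality differs, however. The paper's proof is two lines: each $\sigma_i$ is a simple closed curve on a fiber, hence primitive in $H=\pi_1(S)$, hence primitive in $\Gamma$ (since $H\triangleleft\Gamma$ with torsion-free quotient); in a torsion-free hyperbolic group a finite collection of cyclic subgroups generated by primitive, pairwise non-conjugate elements is automatically malnormal, and the non-conjugacy is the first half of Definition~\ref{def-drillbdl}(2). Your argument instead unpacks the no-essential-annuli clause of Definition~\ref{def-drillbdl}(2) directly via transversality and cut-and-paste. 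This is more work---and the reassembly step you flag is real but resolves exactly as you outline, by connectedness of the domain annulus and disjointness of the $T_k$. The advantage of your route is that it does not use simplicity of the $\sigma_i$ and would apply verbatim to the generalized drilling of Question~\ref{qn-genlzdrill}. For the lemma as stated, the paper's algebraic shortcut is considerably cleaner.
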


\begin{proof}
Since $\sigma_i$'s are simple closed curves, and since $\Gamma$ is hyperbolic they denote primitive elements of $\Gamma$. Hence the collection 
$\{\QQ_i\}$ denotes a malnormal quasiconvex family of subgroups. By a theorem of Bowditch \cite[Theorem 7.11]{bowditch-relhyp}, $\Gamma$ is strongly hyperbolic relative to the collection $\{\QQ_i\}$.
\end{proof}

\section{Relative Hyperbolicity} We refer the reader to \cite{farb-relhyp,bowditch-relhyp} for generalities on relative hyperbolicity.
This section and the next is devoted to proving  the following:

\begin{theorem}\label{thm-relhyp}
If $\Gamma$ is hyperbolic, then $G$ is (strongly) hyperbolic relative to the collection 
$\{\PP_i\}$.
\end{theorem}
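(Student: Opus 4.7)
The plan is to exploit the graph of groups description of $G=\pi_1(F)$ with underlying graph $\GGs$ set up in Section~\ref{sec-prel-bdl}, and to feed it into a combination theorem for relatively hyperbolic groups (e.g.\ Dahmani's, or Mj-Reeves'). The first task is to verify that every vertex group of $\GGs$ is itself relatively hyperbolic with the correct peripheral structure. Vertex groups at vertices coming from $v\in\GG$ are just $H=\pi_1(S)$, hyperbolic, with no peripherals. For an edge vertex $e$ containing drilled curves $\sigma_{e,j}$, the vertex group $G_e=\pi_1(M_e)$ admits its own graph-of-groups decomposition with central vertex group $\pi_1(S)$ and leaf vertex groups the boundary tori $\PP_e^j\cong\Z\oplus\Z$, attached along the cyclic subgroups $\langle\sigma_{e,j}\rangle\hookrightarrow\PP_e^j$ that identify $\sigma_{e,j}$ with a longitude. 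Since each $\sigma_{e,j}$ is a simple closed curve on the closed hyperbolic surface $S$, and since distinct drilled curves within $e$ are pairwise disjoint and non-homotopic, the family $\{\langle\sigma_{e,j}\rangle\}_j$ is almost malnormal and quasiconvex in $\pi_1(S)$; Bowditch's relative hyperbolicity criterion (as already used in Lemma~\ref{lem-rh-bb}) then yields that $G_e$ is strongly hyperbolic relative to $\{\PP_e^j\}_j$.

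The remaining structural ingredient is relative quasiconvexity of the edge-to-vertex inclusions. Every edge group of $\GGs$ equals $H$, and each inclusion into an incident vertex group is either an automorphism of $H$ (at a vertex vertex) or the inclusion of the central $\pi_1(S)$-factor in the decomposition of $G_e$ above (at a drilled edge vertex); both are relatively quasiconvex. With this data in hand, the combination theorem will output the desired relative hyperbolicity of $G$ once the (cone-bounded) hallways-flare condition, or equivalent acylindricity, is verified for the associated tree of spaces. The derivation I would attempt proceeds via the natural surjection $G\twoheadrightarrow\Gamma$, whose kernel is normally generated by meridians: hallways flare in $\til E$ by the hyperbolicity of $\Gamma$ (together with the Bestvina-Feighn direction of the combination theorem), so that any hallway in $\til F$ which stays uniformly away from the preimages of the boundary tori projects to a flaring hallway in $\til E$, while a hallway trapped near a preimage of some $T_i$ is cone-bounded by the corresponding peripheral $\PP_i$.

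The main obstacle is making the ``flaring versus cone-bounded'' dichotomy quantitatively precise for hallways that repeatedly dip into and out of the drilled horoballs, since the map $\til F \to \til E$ is neither an embedding nor a quasi-isometry. The key additional tool is Theorem~\ref{kld}: intersections of finitely many conjugates of $H$ along a Bass-Serre geodesic are infinite-index subgroups of $H$, hence quasiconvex in $\Gamma$, and this is exactly the subgroup control needed to separate hallways cleanly into flaring and peripheral types. A secondary difficulty is that $G$ is the fundamental group of a drilled bundle rather than a genuine bundle, so the usual Bestvina-Feighn flaring argument for hyperbolic surface-by-free extensions must be carried out in the relative setting, with careful attention to the interaction of fiber geodesics with the $\Z\oplus\Z$ peripherals and with the horoballs capping them off.
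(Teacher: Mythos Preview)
Your strategy is in the right spirit but differs substantially from the paper's, and has a genuine gap at the point where you invoke an off-the-shelf combination theorem.

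The paper does \emph{not} cite Dahmani or Mj--Reeves. Instead it proceeds in two geometric steps: first, it equips $\til F$ with a partially electrified pseudometric $\dpe$ (Section~\ref{sec-pel}) in which the meridians of the boundary tori are collapsed, and proves $(\til F,\dpe)$ is Gromov-hyperbolic by verifying the Bestvina--Feighn annuli-flare condition directly (Section~\ref{sec-hypofdpe}); this requires a careful case analysis of essential annuli in $F_S$ (Section~\ref{sec-essentialannuli}, Lemma~\ref{lem-bddbreadth}). Second, it upgrades hyperbolicity of $(\til F,\dpe)$ to strong relative hyperbolicity of $(\til F,d)$ by constructing an explicit path family $\FF$ of electro-ambient quasigeodesics and verifying the six conditions of Sisto's guessing-geodesics criterion (Theorem~\ref{thm-sisto}); the technical core is the stability Proposition~\ref{prop-stab} and the thin-triangles argument in Section~\ref{sec-thintriangles}. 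In effect, the paper proves a bespoke combination theorem by hand.

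The reason the paper cannot simply cite Mj--Reeves is precisely the point you gloss over: the \emph{strictly type-preserving} condition fails for your graph of groups. You declare the $v$-type vertex groups $H=\pi_1(S)$ to carry ``no peripherals''. But the edge group $H$, included into an adjacent drilled vertex group $G_e$, meets each parabolic $\PP_e^j\cong\Z^2$ in the infinite cyclic $\langle\sigma_{e,j}\rangle$; so the preimage in the edge space $\til S$ of a horosphere-like set in $\til M_e$ is a line, which is neither empty nor horosphere-like in $\til S$ under your trivial peripheral structure. If instead you enrich the peripheral structure on $H$ at $v$ to include all $\langle\sigma_{e,j}\rangle$ for all incident drilled edges $e$, you restore type-preservation on the $v$-side but break it on the $e$-side, since curves drilled in some other $e'$ incident to $v$ are not parabolic in $G_e$. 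This incompatibility of induced peripheral structures across the tree is exactly what forces the paper's hands-on approach.

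Your identification of flaring as the crux, and of Theorem~\ref{kld} as the relevant quasiconvexity input, is correct; these are indeed the ingredients the paper uses (see Corollary~\ref{cor-bdydrilledqc}). But the proposal as written does not supply a combination theorem whose hypotheses are actually met, nor does it carry out the flare verification.
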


Theorem~\ref{thm-relhyp} says roughly that the result of drilling a hyperbolic 3-complex fibering over a graph gives a relatively hyperbolic 3-complex. This result is along the lines of earlier work of Belegradek-Hruska
 \cite{beleg-hruska} (see also \cite{beleg}), who prove similar results in a manifold context.
 The proof of Theorem~\ref{thm-relhyp} occupies the rest of this section and the next.

 \subsection{Drilling 3-manifolds}\label{sec-drill3} Let $M$ be either $S \times I$
 or a hyperbolic 3-manifold fibering over the circle with fiber $S$. Thus, $M$ fibers over a compact 1-manifold (possibly with boundary) with fiber $S$.
 We proceed to drill simple closed curves $ \sigma_i$ in some finite collection of fibers.
 A fiber $S'$ will be referred to as \emph{undrilled} if $S'$ does not intersect any
 of the neighborhoods $N_\ep (\sigma_i)$ of the drilled curves. Let $M_r$ denote the drilled manifold. Henceforth, in this paper, $M_r$ will be equipped with  
 \begin{enumerate}
 	\item a complete hyperbolic structure  of finite volume when $M$ fibers over the circle,
 	\item a geometrically finite hyperbolic structure with convex boundary otherwise.
 \end{enumerate} 
 In the latter case, we shall think of $M_r$ as the quotient of the convex hull of
 a geometrically finite representation $\rho: \pi_1(M_r) \to \pslc$ such that the 
 parabolics of $\rho ( \pi_1(M_r))$ correspond precisely to the boundary tori of $M_r$.
Since no two of the drilled curves are freely homotopic in $M$ by Definition~\ref{def-drillbdl}, it follows that $M_r$ is atoroidal.
Since $M_r$ is an atoroidal Haken manifold, the existence of such hyperbolic structures in both cases follows from Thurston's theorem \cite[Ch. 15]{kapovich-book}.

\begin{defn}\label{def-preferred}
Hyperbolic structures as above on $M_r$ will be referred to as \emph{preferred hyperbolic structures}. If $S_x$ is a drilled fiber in $M$, where $\sigma \subset S_x$ is a simple (multi-)curve that is drilled, then
any connected component of $S_x \setminus \sigma$ will be called a 
\emph{preferred subsurface} of a drilled fiber.
\end{defn}

Note that for a preferred hyperbolic structure on $M_r$,  $M_r$ is finite volume without boundary if $M$ fibers over the circle. Else, the boundary consists of
 two copies of $S$,  corresponding to the (singular) fibers over $0, 1 \in I$. Further,
$M_r$ has finitely many rank two cusps (corresponding to the toral boundaries of regular neighborhoods of drilled curves).

 We start with the following observation:
 
 \begin{lemma}\label{lem-geof}
 Let $M, M_r$ be as above where the set of drilled curves correspond to
a finite non-empty collection $\sigma_i \in S_i$   of simple closed curves on fibers $S_i$. Any undrilled fiber $S'$ is geometrically finite in $M_r$. Any preferred 
subsurface of a drilled fiber is also geometrically finite in $M_r$.
 \end{lemma}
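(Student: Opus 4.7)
The approach is to use the Thurston--Canary Covering Theorem together with Bonahon's tameness for incompressible surface subgroups, applied to the covers of $M_r$ corresponding to $\pi_1(S')$ and $\pi_1(K)$; the main task will then be to rule out the virtual fiber alternative by topological considerations on the cusps of $M_r$.

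\textbf{Incompressibility.} First I would verify that both $\pi_1(S')\hookrightarrow\pi_1(M_r)$ and $\pi_1(K)\hookrightarrow\pi_1(M_r)$ are injective. For $S'$, the composition with the surjection $\pi_1(M_r)\twoheadrightarrow\pi_1(M)$ (whose kernel is normally generated by meridians of the drilled curves) recovers the fiber inclusion of the bundle $M\to\GG$, which is injective. An analogous factorization $\pi_1(K)\hookrightarrow\pi_1(S_{x_i})\hookrightarrow\pi_1(M)$ handles $K$, using that components of a surface cut along essential simple closed curves are $\pi_1$-injective.

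\textbf{Ends of the covers.} Let $N\to M_r$ be the cover corresponding to the chosen surface subgroup. Since the subgroup is finitely generated and represents an incompressible surface, Bonahon's theorem gives that $N$ is topologically tame. If $N$ were not geometrically finite, it would carry a geometrically infinite end $\EE$, and Canary's Covering Theorem would furnish one of two alternatives: (a) $\EE$ covers a geometrically infinite end of $M_r$ with finite degree, or (b) $\pi_1(N)$ is a virtual fiber subgroup of $\pi_1(M_r)$. Alternative (a) is ruled out by the preferred hyperbolic structure: $M_r$ is either a finite-volume cusped manifold (Case 1) or a geometrically finite manifold with convex boundary (Case 2), so in either case all ends of $M_r$ are geometrically finite.

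\textbf{Ruling out virtual fibers.} For the closed undrilled fiber $S'$, this is immediate: any virtual fibration of $M_r$ over $S^1$ with fiber $S'$ would force the fiber to meet every cusp torus $T_j$ in a nonempty collection of essential curves, contradicting the hypothesis that $S'$ is disjoint from the drilled tubular neighborhoods. For a preferred subsurface $K$, the argument is more delicate, since $\partial K$ legitimately lies on the cusps: a virtual fibration with fiber $K$ would force the $S^1$-direction on each $T_j$ to be that of the drilled curve $\sigma_j$. I would rule this out by combining the no-essential-annulus condition of Definition~\ref{def-drillbdl}(2) with the presence of closed undrilled-fiber subgroups $\pi_1(S')$ inside $\pi_1(M_r)$, showing that $\pi_1(M_r)$ cannot be virtually $\pi_1(K)\rtimes\Z$ in a manner compatible with the fixed cusp slope data. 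This delicate handling of the $K$-case is the main technical obstacle; the $S'$-case is handled very quickly.
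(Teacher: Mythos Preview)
Your approach is the same as the paper's: both invoke the Thurston--Canary covering theorem, and the paper's proof is literally the single sentence ``This follows from the covering theorem.'' You have usefully unpacked what that sentence entails, and your treatment of the closed undrilled fiber $S'$ is complete.

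For the preferred subsurface $K$ your sketch points the right way but can be finished cleanly, and the no-essential-annulus condition is not needed. Push $K$ slightly off the drilled fiber into an adjacent undrilled fiber $S'$ inside $M_r$; this realises $\pi_1(K)$ as an infinite-index subgroup of the closed surface group $\pi_1(S')\subset\pi_1(M_r)$. If $\pi_1(K)$ were a virtual fiber, the covering theorem would give a finite-index subgroup $K_0\subset\pi_1(K)$ which is normal in some finite-index $G'\subset\pi_1(M_r)$ with $G'/K_0\cong\Z$. Intersecting with $\pi_1(S')$ yields a closed surface group $H'=\pi_1(S')\cap G'$ containing $K_0$ as a normal subgroup, and $H'/K_0\hookrightarrow\Z$; since $K_0$ has infinite index in $H'$, in fact $H'/K_0\cong\Z$. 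But the kernel of any surjection from a closed surface group onto $\Z$ is infinitely generated, contradicting finite generation of $K_0$. (When $M=S\times I$ the virtual-fiber alternative dies immediately from $\chi(M_r)=\chi(S)<0$.)
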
 

\begin{proof}
This  follows from the covering theorem \cite[Theorem 9.2.2]{thurstonnotes} \cite{canary-cover}. 
\end{proof}

 \noindent {\bf Essential Annuli in atoms:}\\ We now describe the essential annuli in atoms. 
 \begin{defn}\label{def-essential}
 An \emph{ immersed essential annulus} or simply an \emph{essential  annulus} $A$ in a 3-manifold $M$ with non-empty boundary is an immersion $i: (A, \partial A) \to (M, \partial M)$ such that 
 \begin{enumerate}
 \item $i_*: \pi_1(A) \to \pi_1(M)$ is injective
 \item $i$ is not homotopic rel. boundary into $\partial M$. 
 \end{enumerate}
 When $M$ is a 3-manifold  fibering over the circle with a distinguished singular fiber $S_x$, an \emph{ immersed essential annulus} or simply an \emph{essential  annulus} $A$
 is an immersion $i: (A, \partial A) \to (M, S_x)$
 such that 
 \begin{enumerate}
 	\item $i_*: \pi_1(A) \to \pi_1(M)$ is injective
 	\item $i$ is not homotopic rel. boundary into $S_x$. 
 \end{enumerate}
 \end{defn}
 Henceforth, we shall refer to immersed essential  annuli simply as essential  annuli.
 In the 3-manifold literature, essential  annuli often refer to embedded essential  annuli in $ (M, \partial M)$. However, since we shall not have any special use for embedded annuli, we shall use the terminology from Definition~\ref{def-essential} in this paper. The usage is consistent with the more group-theoretic notion in \cite{BF} (see Definition~\ref{def-hyphallway} below).
 Note that essential  annuli $A$ in fibered $M$ are allowed to intersect $S_x$ in the interior of $A$ as well, i.e.\ $A$ is allowed to `wrap around transverse to the fibers' of $M$ multiple times so long as $\partial A$  maps to $S_x$.
 Essential annuli in undrilled atoms are given by the following \emph{up to homotopy}:
 \begin{enumerate}
 \item If $M=S \times I$, then essential annuli are of the form $\sigma \times I$, where
 $\sigma$ is an essential, possibly immersed curve in $S$. After homotopy, we may assume that  $\sigma$ is a geodesic in some auxiliary hyperbolic structure on $S$.
 \item If $M$ fibers over the circle, let $M_\Z$ denote the cover of $M$ corresponding to $\pi_1(S)$, so that $M_\Z$ is homeomorphic to $S \times \R$. Then essential annuli in $M_\Z$  are concatenations of annuli $A_i \subset S \times [i, i+1]$ as in item (1). Further, 
 $\sigma_{i+1}=A_i \cap S \times \{ i+1\}=A_{i+1} \cap S \times \{ i+1\}$, so that $A_i, A_{i+1}$ may be concatenated along the essential, possibly immersed curve 
 $\sigma_{i+1}$.
 \end{enumerate}
 
 Essential annuli in drilled atoms are a bit more involved.
 Let $M=S \times I$, and $M_r$ be a drilled atom obtained from $M$. Let $\sigma_1, \cdots, \sigma_m$ denote the drilled curves on $S$. Realize $\sigma_i$'s by geodesics in an auxiliary hyperbolic structure on $S$.
 Let $\Sigma_0$ denote the subsurface of $S$ filled by $\cup_i \sigma_i$, i.e.\ adjoin to  $\cup_i \sigma_i$ all simply connected complementary regions. 
 Then essential annuli in $M_r$ are of three kinds after homotopy:
 \begin{enumerate}
 \item Essential annuli of the form $\sigma \times I$, respecting the product structure of $S \times I$, where
 $\sigma$ is an essential, possibly immersed curve in $S$. In this case, $\sigma$   necessarily lies in (a component of) $S \setminus \Sigma$.
 \item Essential annuli with both boundary curves on either $S \times \{0\}$ or 
  $S \times \{1\}$ with core curve homotopic to a multiple of one of the $\sigma_i$'s. In this case, $A$ wraps around $N_\ep(\sigma_i)$ finitely many times. Such an annulus is referred to as an \emph{atomic wrapping back-tracking annulus}.
  \item Essential annuli with one boundary curve on  $S \times \{0\}$, and one on
  $S \times \{1\}$ with core curve homotopic to  a multiple of  one of the $\sigma_i$'s, such that  $A$ wraps around $N_\ep(\sigma_i)$ finitely many times.
  Such an annulus is referred to as an \emph{atomic wrapping, non-back-tracking annulus}.
 \end{enumerate}
 
 Next, suppose that $M$ fibers over the circle with monodromy $\Phi$. Let $S_x \subset M$ denote the unique singular fiber. Let $S_1, \cdots, S_m$ denote drilled
 fibers and $\sigma_i \subset S_i$, $i=1, \cdots, m$ denote drilled simple closed curves. Let $M_r$ denote $M$ after drilling. Let $M_\Z$ denote the cover of $M$ corresponding to $\pi_1(S)$ and $M_S$ denote the elevation of $M_r$ to $M_\Z$.
 Let $\cdots, S_x^{-1}, S_x^0, S_x^1, \cdots$ denote the elevations of $S_x$ to $M_S$.
 Without loss of generality, assume that the elevated annulus $A_1$ starts on $S_x^0$.
 Also, let $\Sigma_n$ denote the subsurface of $S$ filled by $\bigcup_{i=1, \cdots, m}
 \bigcup_{j=0,\cdots, n-1} \Phi^j( \sigma_i)$, i.e.\ adjoin to  $\bigcup_{i=1, \cdots, m}
 \bigcup_{j=0,\cdots, n-1} \Phi^j( \sigma_i)$ all simply connected complementary regions of $S$. 
 
 \begin{lemma}\label{lem-fill}
 There exists $N \in \natls$ such that $\Sigma_n=S$ for all $n \geq N$.
 \end{lemma}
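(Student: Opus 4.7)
The plan is to reduce this to the standard north-south dynamics of a pseudo-Anosov on $\PML(S)$. Since $\Gamma$ is hyperbolic and $E$ here fibers over a circle, Thurston's classification of surface homeomorphisms together with his hyperbolization of mapping tori forces the monodromy $\Phi$ to be pseudo-Anosov; let $\lambda^u,\lambda^s \in \ML(S)$ denote its attracting and repelling measured laminations, with dilatation $K>1$. Both are minimal and filling, so that $i(\lambda^u,\nu)>0$ for every $\nu \in \ML(S)$ with $[\nu]\neq [\lambda^u]$ in $\PML(S)$, and similarly for $\lambda^s$. I will also record the monotonicity $\Sigma_n \subseteq \Sigma_{n+1}$: adding further curves to a filling collection can only subdivide a disk complementary region into smaller disks. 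It therefore suffices to produce a single $N$ with $\Sigma_N = S$.

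Suppose for contradiction that $\Sigma_n \neq S$ for every $n$. Then $S\setminus \Sigma_n$ contains a non-simply-connected component, which supports an essential simple closed curve $\tau_n$ satisfying $i(\tau_n,\Phi^j(\sigma_i))=0$ for all $0\leq j<n$ and $1\leq i\leq m$. By compactness of $\PML(S)$, after passing to a subsequence the projective classes $[\tau_{n_k}]$ converge to some $[\mu]\in \PML(S)$; choosing representatives so that $\tau_{n_k}/\|\tau_{n_k}\|\to \mu$ in $\ML(S)$ and using continuity of the Thurston intersection form, the finite vanishing above promotes to
\[
  i(\mu,\Phi^j(\sigma_i))=0 \quad \text{for all } j\geq 0,\ 1\leq i \leq m.
\]

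To derive the contradiction I will invoke north-south dynamics of $\Phi$ on $\PML(S)$. Using $\Phi$-invariance of the intersection form, $i(\Phi^{-j}(\mu),\sigma_1)=i(\mu,\Phi^j(\sigma_1))=0$ for all $j\geq 0$. If $[\mu]\neq [\lambda^u]$, then $K^{-j}\Phi^{-j}(\mu)\to c\cdot\lambda^s$ in $\ML(S)$ for some $c>0$, so continuity of the intersection form yields $i(\lambda^s,\sigma_1)=0$, contradicting that $\lambda^s$ is filling. If $[\mu]=[\lambda^u]$, then $\mu$ is a positive multiple of $\lambda^u$ and the case $j=0$ reads $i(\lambda^u,\sigma_1)=0$, again contradicting filling. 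The main delicate point I anticipate is bookkeeping the $\ML$ versus $\PML$ scalings when transporting vanishing intersection numbers from the sequence $\tau_{n_k}$ to the limit $\mu$ and then to $\lambda^s$ (or $\lambda^u$); once these scalings are handled carefully, the remainder is a routine application of the attracting/repelling behavior of a pseudo-Anosov.
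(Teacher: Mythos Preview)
Your proof is correct, but it takes a different route from the paper's. The paper argues directly via the curve complex: since $\Phi$ is pseudo-Anosov, it acts with unbounded orbits on $\CC(S)$, so there is $N$ with $d_{\CC(S)}(\sigma_i,\Phi^n(\sigma_i))\geq 3$ for all $n\geq N$; two curves at curve-complex distance $\geq 3$ fill $S$, and hence $\Sigma_n=S$. This is a two-line application of Masur--Minsky-style facts that are in any case used elsewhere in the paper (compare Lemma~\ref{lem-bddbreadth}).

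Your argument instead runs Thurston's north--south dynamics on $\PML(S)$: you extract a limiting lamination $\mu$ with $i(\mu,\Phi^j(\sigma_1))=0$ for all $j\geq 0$, and then use convergence of (rescaled) $\Phi^{-j}(\mu)$ to a multiple of $\lambda^s$ to force $i(\lambda^s,\sigma_1)=0$ unless $[\mu]=[\lambda^u]$, which in turn forces $i(\lambda^u,\sigma_1)=0$. This is sound; the only point needing care (which you flag) is the $\ML$ versus $\PML$ bookkeeping, and that is handled by normalising so that $i(K^{-j}\Phi^{-j}(\mu),\lambda^u)=i(\mu,\lambda^u)$ stays constant, together with the fact that $[\mu]\neq[\lambda^u]$ is equivalent to $i(\mu,\lambda^u)>0$ by minimality and unique ergodicity of $\lambda^u$. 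Your approach is more classical and avoids invoking the curve complex, at the cost of being longer; the paper's approach is quicker and dovetails with the curve-complex arguments already in play.
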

 
 \begin{proof}
 Since $\Phi$ is a pseudo-anosov, there exists $N \in \natls$ such that $$d_{\CC(S)}
 (\sigma_i, \Phi^n(\sigma_i))\geq 3$$ for all $n \geq N,$ where $d_{\CC(S)}$ denotes distance in the curve complex of $S$. Hence, $\sigma_i, \Phi^n(\sigma_i)$ fill $S$ for all for all $n \geq N$.
 \end{proof}

 Essential annuli in $M_r$ for $M$ a fibered 3-manifold will be described in greater detail and in a more general setting in Section~\ref{sec-essentialannuli} below. For now, it suffices to say that  an essential annulus in $M_r$ lifts to an essential annulus
 in $M_S$ starting and ending on $S_x^i, S_x^j$ for some $i, j \in \Z$.

\subsection{Essential annuli in $F$}\label{sec-essentialannuli} We use the classification of essential annuli in $M_r$ described above to identify essential 
 annuli in the drilled bundle $F$. Since $F \subset E$, an essential annulus $A \subset F$ may also be regarded as a subset of $E$. The formalism we use is due to Bestvina-Feighn \cite{BF}.

 \begin{defn}\label{def-hallway}\cite[p. 87]{BF} 
 	Let $\GG$ be a graph, and $Y$
 	a graph of spaces with base graph $\GG$ so that the maps of edge-spaces to vertex spaces are injective at the level of the fundamental group. Let $X=\til Y$ be the universal cover, and
 	$ \TT$ the resulting tree of spaces, whose vertex and edge   spaces are universal covers of vertex and edge   spaces of $Y$.
 	Let \(m\) be a positive integer and $I$ denote the closed unit interval. A \emph{hallway} in \({X}\) is a map \(\mathcal{H}:[-m, m] \times I \rightarrow {X}\) with the following properties.
 	\begin{enumerate}
 		\item \(\mathcal{H}^{-1}(\cup {X_e}) = \{-m, -m + 1, \dots, m\} \times I\)
 		\item \(\mathcal{H}\) is transverse  to \(\cup {X_e}\) relative to the previous condition.
 		\item For each \(i \in \{-m, -m + 1, \dots, m\}\), the image of \(\mathcal{H}\) restricted to \(\{i\} \times I \) is a geodesic in the corresponding edge space.
 	\end{enumerate}
 	We say that such a hallway  has \emph{length} \(2m\). The \emph{girth} of the hallway is the length of the curve \(\mathcal{H}(\{0\} \times I)\).
 		A hallway is \emph{essential} if the projection of \(\mathcal{H}\) onto the base tree \({\TT}\) is a path which does not backtrack.
 	
 		A map \({A}: [-m, m] \times S^1 \rightarrow Y\) is an \emph{annulus} if the lift \(\til{{A}}: [-m, m] \times I \rightarrow {X}\) is a hallway. An annulus $A$ is \emph{essential} if the hallway $\til{{A}}$ is essential.
 		
 		An essential annulus  \({A}: [0, m] \times S^1 \rightarrow Y\) is said to \emph{start at $Y_v$} if  ${A}\big(\{0\} \times S^1\big) \subset Y_v$.
 \end{defn}
To relate the notion of essential annuli in 3-manifolds with those in Definition~\ref{def-hallway}, we need the following.
 \begin{defn}\label{def-atom}
 	Recall that $\Pi: E \to \GG$ denotes the surface bundle $E$ over the graph $\GG$.
 	Let $M_e = \Pi^{-1} (\bar{e}) \cap F$. If $e$ is a drilled edge, we refer to 
 	$M_e$ as a \emph{drilled atom} of $F$, else we refer to it as an \emph{undrilled atom} of $F$. Elevations of drilled (resp.\ undrilled) atoms of $F$ to $\til E_r$ will be referred to as drilled (resp.\ undrilled) atoms of $\til E_r$.
 	Elevations of drilled (resp.\ undrilled) atoms of $F$ to $\til F$ will be referred to as drilled (resp.\ undrilled) atoms of $\til F$.\\
 	For any subgraph $\LL \subset \GG$, $M_\LL = \Pi^{-1} (\LL) \cap F$ will be called the restriction of $F$ to $\LL$. If $\LL$ is a maximal subgraph of $\GG$, such that each edge of $\LL$ is undrilled, $M_\LL $ will be called a \emph{maximal undrilled subbundle} of $F$ (as well as of $E$). Elevations of such $M_\LL $
 	to $\til E_r$ or $\til F$ will be called \emph{elevated maximal undrilled subbundles}.
 \end{defn}
Recall from Section~\ref{sec-prel-bdl} that, strictly speaking, $F$ admits a graph of spaces structure, with underlying graph $\GG^*$, where
  the vertex spaces of $F$ are drilled or undrilled atoms. However, for ease of notation, we shall henceforth refer to $\GG^*$ by $\GG$, abusing notation slightly.
   Let $F_S$ be the cover of
  $F$ corresponding to the kernel of the map $\Pi_*: \pi_1(F) \to \pi_1(\GG)$. 
  Let $\til \GG$  denote the universal cover of $\GG$.
Let $\Pi_S: F_S \to \til \GG$ denote the projection map from  $F_S$ to $\til \GG$.
Then $F_S$ admits a graph of spaces structure, where 
\begin{enumerate}
\item   each vertex space is a copy of $S \times I$ with, possibly, some curves drilled. The interior of each vertex space is given by one of the following:
\begin{itemize}
\item $M \setminus S_x$ if $M$ is an undrilled atom of $F$ fibering over the circle,
\item $M_r \setminus S_x$ if $M_r$ is a drilled atom of $F$ fibering over the circle,
\item $M \setminus S\times \{0,1\}$ if $M$ is an undrilled atom of $F$ fibering over $I$,
\item $M_r \setminus S\times \{0,1\}$ if $M_r$ is a drilled atom of $F$ fibering over $I$.
\end{itemize}
\item each edge space is $S$.
\end{enumerate}
 Thus, $F_S$ is a drilled surface bundle over the graph $\til \GG$ (see Definition~\ref{def-drillbdl}). 
 Note that $E_S$ also has a similar graph of spaces decomposition over $\til \GG$ where all the vertex spaces are  copies of $S\times I$, and each edge space is $S$. With this structure in mind, $F_S$ embeds into $E_S$ as a graph of spaces over $\til \GG$.
 We now describe essential annuli in $F_S$ in terms of
 the description of  essential annuli in atoms given in Section~\ref{sec-drill3}.

Let $A$ denote an essential annulus in $F$. Let $A_S$ denote an elevation of $A$ to $F_S$. Then the image $\Pi_S(A_S)$ is an unparametrized geodesic $[a,b]$ in $\til \GG$. Let $a=v_0, v_1, \cdots, v_n =b$ denote the vertices of $[a,b]$. Let $F_S^i$ denote the vertex space of $F_S$ corresponding to $v_i$. 
Then  $A_S$  is given by a concatenation of essential annuli in the atoms $F_S^i$. We shall show now that  $A_S$ comes broadly in  two flavors: backtracking
and non-backtracking.
Suppose $A_S$ is  an essential annulus  of the form $\sigma \times [p, q]$, respecting the topological product structure of $F_S$, such that
\begin{enumerate}
\item $\sigma$  an essential, possibly immersed curve in $S$,
\item  $\sigma \times \{0\}$
lies on $S_x^0$, and $\sigma \times \{n\}$
lies on $S_x^n$,
\end{enumerate}
then we shall refer to such an essential annulus as a \emph{non-backtracking annulus}.
 	In this case, $\sigma$   necessarily lies in (a component of) $S \setminus \Sigma_n$ (for instance, by the annulus theorem). In this case, there are no atomic wrapping annuli in the sense of Section~\ref{sec-drill3} contained in $A_S$.

Next, suppose $A_S$ is  an essential annulus  containing an atomic wrapping annulus.
Recall that $E_S$ denotes the surface bundle over $\til\GG$ corresponding to the cover of $E$ corresponding to $\pi_1(S)$, so that $F_S$ is obtained from $E_S$ by drilling a family of non-homotopic simple closed curves.
Then, there exists a drilled curve $\eta$ in $E_S$ such that $A_S$ wraps around
the boundary of $N_\ep (\eta)$ in $F_S$. Hence, the core curve of $A_S$ is homotopic to a non-trivial power of $\eta$. We observe the following:

\begin{lemma}\label{lem-1wrap}
Suppose $A_S$ is  an essential annulus  containing an atomic wrapping annulus.
Then  $A_S$ contains exactly one atomic wrapping annulus.
\end{lemma}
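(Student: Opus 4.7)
The plan is to argue by contradiction. Suppose $A_S$ contains two atomic wrapping annuli $W_1, W_2$. Because the projection of an essential hallway to the base tree $\til\GG$ is non-backtracking, each atom of $F_S$ is entered at most once by $A_S$; in particular, $W_1$ and $W_2$ must lie in two distinct drilled atoms $M_{e_1}, M_{e_2}$ of $F_S$ over distinct drilled edges of $\til\GG$. Let $\eta_1 \subset M_{e_1}$ and $\eta_2 \subset M_{e_2}$ denote the corresponding elevated drilled curves in $E_S$. Since distinct drilled atoms of $F_S$ contain different fibers in $E_S$, and hence different elevations of the drilled curves $\sigma_i \subset E$, the curves $\eta_1$ and $\eta_2$ are distinct elevations in the sense of Condition~\ref{cond-hyp}.

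Next I compare core curves. By the classification of essential annuli in a drilled atom given in Section~\ref{sec-drill3}, the core curve of each $W_i$ is freely homotopic inside $F_S$ to $\eta_i^{k_i}$ for some nonzero integer $k_i$ (the wrapping number around $\partial N_\ep(\eta_i)$). Since both $W_1$ and $W_2$ are sub-hallways of $A_S$, their cores are freely homotopic to the core of $A_S$ through homotopies supported in $A_S \subset F_S$. Concatenating these yields a free homotopy $\eta_1^{k_1} \simeq \eta_2^{k_2}$ inside $F_S$, and hence inside $E_S$.

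Finally I upgrade this to a free homotopy of the primitive curves themselves. Because $\til\GG$ is a tree and $E_S \to \til\GG$ is a surface bundle, $\pi_1(E_S) \cong \pi_1(S)$, a torsion-free hyperbolic surface group in which maximal abelian subgroups are infinite cyclic and roots are unique up to sign. The elements $\eta_1, \eta_2$ are primitive, being represented by essential simple closed curves on fibers. Hence the conjugacy $\eta_1^{k_1} \sim \eta_2^{k_2}$ in $\pi_1(S)$ forces $\langle \eta_1 \rangle$ and a conjugate of $\langle \eta_2 \rangle$ to coincide, so $\eta_1$ is conjugate to $\eta_2^{\pm 1}$ in $\pi_1(E_S)$. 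Promoting the algebraic conjugacy to a topological free homotopy and perturbing the resulting annulus to avoid the other $1$-dimensional elevated drilled curves in the $3$-complex $E_S$ (possible by transversality), we obtain a free homotopy between the distinct elevated drilled curves $\eta_1$ and $\eta_2$ in the complement of other elevated drilled curves, directly contradicting Condition~\ref{cond-hyp}.

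The step I expect to require the most care is the second one: one must verify that the homotopy identifying the core of $W_i$ with $\eta_i^{k_i}$ can be taken to lie inside $F_S$ (not only after refilling the drilled solid torus), so that it concatenates cleanly with the rest of $A_S$ inside $F_S$. This is clear from the geometric picture of a wrapping annulus encircling the boundary torus $T_i$ in $F_S$, but it is the one place where the drilled-versus-undrilled distinction must be tracked carefully to ensure that the final contradiction with Condition~\ref{cond-hyp} is legitimate.
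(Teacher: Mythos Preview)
Your proof is correct and follows essentially the same approach as the paper's: both arguments reduce to the fact (from Definition~\ref{def-drillbdl} and Condition~\ref{cond-hyp}) that no two distinct elevations of drilled curves to $E_S$ are freely homotopic, combined with the observation that the core of $A_S$ is freely homotopic to a nontrivial power of each drilled curve it wraps around. Your write-up is more explicit than the paper's, spelling out the unique-roots step in $\pi_1(S)$ and the reason the two wrapping annuli lie in distinct atoms.

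Two minor simplifications are available. First, the transversality step at the end is unnecessary: Definition~\ref{def-drillbdl}(2) already forbids free homotopies between distinct $\sigma_i$ (and nontrivial self-homotopies) in $E$ itself, with no complement restriction; lifting to $E_S$ gives directly that distinct elevations are not freely homotopic in $E_S$, so you need not perturb the annulus off the other elevated curves. Second, your stated concern about whether the homotopy from the core of $W_i$ to $\eta_i^{k_i}$ lies in $F_S$ is not the crux: since the contradiction is ultimately derived in $E_S$, it suffices that the free homotopy $\eta_1^{k_1} \simeq \eta_2^{k_2}$ holds in $E_S$, which follows simply because $A_S \subset F_S \subset E_S$.
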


\begin{proof}
 Since $A_S$ is  an essential annulus  containing an atomic wrapping annulus, it has core curve freely homotopic to a (non-trivial power of a) drilled curve $\eta$. Note next that, by Definition~\ref{def-drillbdl},
 and Condition~\ref{cond-hyp}, no  two distinct elevations of drilled curves to $E_S$ are freely homotopic. Hence,  $A_S$ contains exactly one atomic wrapping annulus.
\end{proof}

Essential annuli $A_S$ in $E_S$ containing an atomic wrapping annulus are of two kinds depending on the nature of the atomic wrapping annulus. Let $B_S\subset A_S$ denote the
unique atomic wrapping annulus contained in $A_S$ with core curve homotopic to
$\eta^m$, $m \neq 0$.
\begin{enumerate}
	\item Both boundary curves of $B_S$ lie on  a single singular fiber  $S_y$. In this case,
	$B_S$, and hence $A_S$, wraps around $N_\ep( \eta)$ finitely many times. We shall refer to such a $B_S$  as an \emph{ elementary backtracking annuli}, and $A_S$ as an annulus with backtracking.
	\item There exists a drilled atom $M_e$ with boundary surfaces $S_y, S_z$, such that the distinct boundary curves of $B_S$ lie on the distinct 
	boundary surfaces $S_y, S_z$.  Again,  	$B_S$, and hence $A_S$ wraps around $N_\ep( \eta)$ finitely many times, but otherwise is required to respect the product structure on 
$M_e$.  We shall refer to such $B_S$ as \emph{ elementary wrapping annuli without backtracking}, and $A_S$ as an annulus with wrapping but no backtracking.
\end{enumerate} 

\begin{defn}\label{def-intersect}
An essential annulus  $A_S$ in $F_S$ is said to \emph{intersect} a drilled atom $M_r \subset F_S$ if $A_S$ contains an elementary annulus $B_S \subset M_r$. (Note that $B_S$ may be a non-backtracking annulus, an elementary backtracking annulus, or an  elementary wrapping annulus without backtracking.)
\end{defn}

\begin{lemma}\label{lem-bddbreadth}
		There exists $D \in \natls$ such that the following holds:\\
	Let $A_S \subset F_S \subset E_S$ be an essential 
	annulus intersecting drilled atoms $M_a$, $M_b$, where $a, b \in \til{\GG}$. Then
$$d_{\til{\GG}} (a,b) \leq D.$$
\end{lemma}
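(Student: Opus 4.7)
The strategy is to associate to $A_S$ a single vertex $\gamma_A$ of the curve complex $\mathcal{C}(S)$ that records its core curve, show that both $\eta_a$ and $\eta_b$ are within $\mathcal{C}(S)$-distance one of $\gamma_A$, and then convert this $\mathcal{C}(S)$-bound to a $\til\GG$-bound using that the hyperbolicity of $\Gamma$ forces the monodromy $\rho : Q = \pi_1(\GG) \to \operatorname{MCG}(S)$ to be convex cocompact.

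Fix a trivialization $E_S \cong S \times \til\GG$; through it, each drilled point $x \in \til\GG$ determines a drilled curve $\eta_x \in \mathcal{C}(S)$. By Lemma~\ref{lem-1wrap}, $A_S$ contains at most one atomic wrapping annulus. If one exists, say in a drilled atom $M_c$, then the core curve of $A_S$ is freely homotopic to a non-trivial power of $\eta_c$ and we set $\gamma_A := \eta_c$. Otherwise $A_S$ is globally non-backtracking, of the form $\sigma \times [p,q] \subset S \times \til\GG$, and we set $\gamma_A := \sigma$.

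Next, for any drilled atom $M_r$ intersected by $A_S$, the elementary annulus $B \subset M_r \cap A_S$ is either the unique atomic wrapping annulus (forcing $r=c$ and $\gamma_A=\eta_r$) or a non-backtracking elementary annulus of the form $\tau \times I$; in the latter case the classification of essential annuli in drilled atoms recalled in Section~\ref{sec-drill3} forces $\tau \subset S \setminus N_\ep(\eta_r)$, so that $\tau = \gamma_A$ is disjoint from $\eta_r$ and hence $d_{\mathcal{C}(S)}(\gamma_A, \eta_r) \leq 1$. Applying this at both $M_a$ and $M_b$ and using the triangle inequality,
\[
d_{\mathcal{C}(S)}(\eta_a,\eta_b) \leq 2,
\]
with the subcase $M_a = M_b$ giving $d_{\til\GG}(a,b) = 0$ trivially.

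Finally, since $\Gamma$ is hyperbolic, Hamenstadt's theorem \cite{hamen} tells us that $\rho : Q \to \operatorname{MCG}(S)$ is convex cocompact in the sense of Farb--Mosher, so every $Q$-orbit map into $\mathcal{C}(S)$ is a quasi-isometric embedding. Because $\GG$ has only finitely many drilled points, $\til\GG$ has only finitely many $Q$-orbits of drilled points, and combining the orbit-map QI embeddings with the cocompact $Q$-action on $\til\GG$ turns the $Q$-equivariant assignment $x \mapsto \eta_x$ into a coarse quasi-isometric embedding from the set of drilled points into $\mathcal{C}(S)$. This produces a uniform $D \in \mathbb{N}$ such that $d_{\mathcal{C}(S)}(\eta_a, \eta_b) \leq 2$ forces $d_{\til\GG}(a,b) \leq D$. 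The main care is needed in the first two paragraphs, where Lemma~\ref{lem-1wrap} is invoked to guarantee that $\gamma_A$ is unambiguous even when $A_S$ contains a wrapping segment; once $\gamma_A$ has been located in $\mathcal{C}(S)$, the convex cocompactness input delivers the bound essentially for free.
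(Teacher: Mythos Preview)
Your proof is correct and follows the same approach as the paper's: both observe that the core curve of $A_S$ must be disjoint from the drilled curves $\eta_a,\eta_b$ in the atoms it meets (the paper phrases this contrapositively, arguing that $d_{\CC(S)}>3$ forces $\eta_a,\eta_b$ to fill $S$), and then invoke convex cocompactness of $Q$ to convert the resulting curve-complex bound into a $\til\GG$-bound. One minor point to tighten: in the non-wrapping case your $\gamma_A=\sigma$ may be an \emph{immersed} curve and hence not literally a vertex of $\CC(S)$, so the inequality $d_{\CC(S)}(\gamma_A,\eta_r)\le 1$ is not well-posed as written---but since an essential (possibly immersed) curve disjoint from both $\eta_a$ and $\eta_b$ already witnesses that they do not fill $S$, the conclusion $d_{\CC(S)}(\eta_a,\eta_b)\le 2$ survives unchanged.
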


\begin{proof}
	This is similar to Lemma~\ref{lem-fill}. It was shown by Kent-Leininger \cite{kl-coco} and Hamenstadt \cite{hamen} that the following are equivalent:
	\begin{enumerate}
	\item $\pi_1(E)$ is hyperbolic,
	\item  $\pi_1(\GG)$ acts on the curve complex $\CC(S)$ with qi-embedded orbits,
	\item $\pi_1(\GG)$ is a convex-cocompact subgroup of the mapping class group $MCG(S)$ in the sense of \cite{farb-coco}.
	\end{enumerate}
	Since, $\pi_1(E)$ is hyperbolic by assumption, it follows that  
	 $\pi_1(\GG)$ acts on the curve complex $\CC(S)$ with qi-embedded orbits. Let $i: 
	 \til{\GG} \to \CC(S)$ be the induced qi-embedding.
	 Hence,
	 there exists $D \in \natls$  such that if $d_{\til{\GG}} (a,b) > D,$ then 
	 $d_{\CC(S)} (i(a),i(b)) > 3$. 
	 Let $\sigma_a$ and  $\sigma_b$ denote drilled curves in $M_a$, $M_b$ respectively,
	 so that $\sigma_a$ and  $\sigma_b$ fill $S$. 
	 
	 If the core curve of $A_S$ 
	intersects both $M_a$, $M_b$  in the sense of Definition~\ref{def-intersect}, it must be disjoint from the subsurface of $S$ filled by  $\sigma_a$ and  $\sigma_b$,
	an impossibility.
\end{proof}

\subsection{Partial electrification}\label{sec-pel} Recall that the bundle $\Pi: E \to \GG$   restricts to a map
$\Pi_r: F \to \GG$. \\

\noindent {\bf Partial electrification of $  \partial{N_\ep(\sigma_i)}$:}\\
Let $M_r$ denote a drilled atom in $F$, and let $ \partial  {N_\ep(\sigma_i)}$ denote the boundary of a drilled curve in $M_r$.  Choose a homeomorphism of $ \partial  {N_\ep(\sigma_i)}$ with $S^1 \times S^1$, and assume, without loss of generality, that each $S^1 \times \{t\}$ is a meridian. Equip $S^1 \times S^1$ with a product metric,
where the first factor is given the zero metric, and the second factor the standard round metric of radius one. We refer to the resulting path-pseudometric on 
$ \partial  {N_\ep(\sigma_i)}$ as \emph{the partially electrified path-pseudometric}
and denote it as $\dpe$.\\

 \noindent {\bf Partial electrification of drilled atoms:}\\ Recall that any drilled
 $M_r$ of $F$ has been equipped with the restriction of a complete hyperbolic metric.
 If $M_r$ is obtained by drilling $S \times I$, then the surface boundary components $S \times \{0,1\}$ are convex. Removing a small neighborhood of the cusps we obtain boundary tori 
 $ \partial  {N_\ep(\sigma_i)}$  equipped
 with  flat Euclidean metrics. Abusing notation slightly, we continue to refer to the
 resulting compact 3-manifold with boundary also as $M_r$.
 Rescaling the hyperbolic metric if necessary, we may assume that $ \partial  {N_\ep(\sigma_i)}$ has a product metric as in the previous paragraph. Replace each such metric by the partially electrified path-pseudometric $\dpe$ described in the previous paragraph. We now consider a family of paths, each of which is given by a concatenation of pieces that 
 \begin{enumerate}
 \item Either have interior disjoint from the boundary tori $\{ \partial  {N_\ep(\sigma_i)}\}$. The length of such a piece is given by its hyperbolic length. 
 \item Or lie entirely in some boundary torus $\{ \partial  {N_\ep(\sigma_i)}\}$.
 The length of such a path is given by its length with respect to $\dpe$ on 
  $\{ \partial  {N_\ep(\sigma_i)}\}$.
 \end{enumerate}
 Then the length of a path is given by the sum of the above pieces. The resulting 
 path-pseudometric on $M_r$ is referred to as  \emph{the partially electrified path-pseudometric on $M_r$ }
 and is also denoted  as $\dpe$. 
 
 If the atom  $M_r$ of $F$ is obtained from an atom $M$ of $E$ that fibers over the circle, then any elevation $M_S$ of $M_r$ to $F_S$ is a cyclic cover of $M_r$ corresponding to the natural epimorphism to $\Z$ inherited from $M$. Then 
  $M_S$ is a concatenation of a $\Z$'s worth of atoms of $F_S$. Each atom $M_e$ of $F_S$ is equipped with the inherited path metric from $M_S$, and the resulting 
  path-pseudometric on $M_e$ is referred to as  \emph{the partially electrified path-pseudometric on $M_e$ }
  and is also denoted  as $\dpe$. 
 
 The universal cover of $M_r$ will be denoted by $\til M_r$. The lift of the partially electrified path-pseudometric on $M_r$ to $\til M_r$ is referred to as  \emph{the partially electrified path-pseudometric on $\til M_r$ }
 and is also denoted  as $\dpe$. Similarly for $M_e$. 
 
 \begin{lemma}\label{lem-atomshyp} There exist $\delta \geq 0, K \geq 1, \ep \geq 0$ such that the following hold.\\
 	Let $M_e$ denote an atom of $F_S$. Let $S_x$ denote  a surface boundary component
 	of $M_e$. Let $\PP$ denote the collection of elevations of the tori $\{ \partial  {N_\ep(\sigma_i)}\}$ to $\til M_e$. Then
 	\begin{enumerate}
 		\item $\til M_e$ is strongly hyperbolic relative to $\PP$.
 	\item  $(\til M_e,\dpe)$ is $\delta-$hyperbolic.
 	\item the inclusion of $\til S_x$ into $(\til M_e,\dpe)$ is a $(K,\ep)-$qi embedding
 	for any elevation  $\til S_x$  of $S_x$.
 	\end{enumerate}
 \end{lemma}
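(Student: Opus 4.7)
The plan is to establish (1) via Thurston's hyperbolization, and then to deduce (2) and (3) by viewing partial electrification as a metric analog of Dehn-filling the drilled meridians back in.

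For (1), I observe that $M_e$ is a possibly drilled copy of $S\times I$, Haken with incompressible boundary and atoroidal (the no-essential-annulus condition of Definition~\ref{def-drillbdl} descends from $F$ to each atom). Thurston's hyperbolization theorem \cite[Ch.~15]{kapovich-book}, applied exactly as in Section~\ref{sec-drill3}, then equips $M_e$ with a preferred geometrically finite hyperbolic structure whose parabolic subgroups are precisely the $\Z\oplus\Z$'s in $\PP$, stabilizing horoballs at the drilled tori $\{T_e^j\}$. Strong relative hyperbolicity of $\pi_1(M_e)$, and hence of $\til M_e$, with respect to $\PP$ then follows from Farb--Bowditch.

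For (2), the crux is that partial electrification collapses only the meridian circle $\mu_j$ (the curve on $T_e^j$ bounding a disk in the removed tube) in each boundary torus, while leaving the longitude $\sigma_j$ intact. Group-theoretically, this amounts to coning off the $\pi_1(M_e)$-orbit of each cyclic subgroup $\langle \mu_j\rangle\subset\PP$. Topologically, killing the meridians $\mu_j$ Dehn-fills the drilled tubes back in and so recovers the undrilled atom $S\times I$; hence $\pi_1(M_e)/\langle\langle \mu_j\rangle\rangle \cong \pi_1(S)$, a word-hyperbolic group. I plan to invoke the Dehn filling theorem for relatively hyperbolic groups (Osin; Groves--Manning; Agol--Groves--Manning) with each $\langle\mu_j\rangle$ as the filling slope --- its translation length in the horospherical Euclidean metric can be made arbitrarily large by shrinking the cusp neighborhoods $N_\ep(\sigma_i)$ --- to conclude that the cone-off of $\pi_1(M_e)$ along $\{\langle\mu_j\rangle\}$ is equivariantly quasi-isometric to $\pi_1(S)$. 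Transferring this quasi-isometry to $(\til M_e,\dpe)$ then gives $\delta$-hyperbolicity.

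Part (3) then follows since the composition $\pi_1(S)\hookrightarrow \pi_1(M_e)\twoheadrightarrow \pi_1(M_e)/\langle\langle\mu_j\rangle\rangle\cong\pi_1(S)$ is the identity, so the inclusion $\til S_x \hookrightarrow (\til M_e,\dpe)$ composed with the QI to a Cayley graph of $\pi_1(S)$ is a self-quasi-isometry of $\til S_x\cong\Hyp^2$, and unpacking the constants yields the required $(K,\ep)$-quasi-isometric embedding. The main obstacle will be extracting uniform quasi-isometry constants from the Dehn filling machinery in the partial (cyclic-subgroup) cone-off setting, and matching the continuous, metric partial electrification of each horoball in $\til M_e$ with the discrete group-theoretic cone-off, uniformly across the finitely many $\pi_1(M_e)$-conjugacy classes of peripherals; a direct geometric alternative --- modeling $\til M_e$ explicitly and collapsing horoball planes to parallel lines, then verifying thin triangles in the resulting tree-of-hyperbolic-pieces --- would bypass the Dehn filling citation but reduces to the same content.
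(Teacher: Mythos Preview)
Your argument for (1) coincides with the paper's.

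For (2) and (3) your route diverges substantially from the paper's. The paper does not go through Dehn filling at all: for (2) it simply invokes \cite[Lemma~1.20]{mahan-pal}, a general statement that partially electrifying a strongly relatively hyperbolic space along peripherals which become hyperbolic (here each $\R^2$ collapses to a line) yields a hyperbolic space. For (3) the paper argues geometrically: the three kinds of geodesics (ambient, electric, partially electrified) track one another away from $\PP$ by \cite[Lemma~1.21]{mahan-pal} and Farb's results, and near each $P\in\PP$ one analyzes the nearest-point projection of $P$ onto $\til S_x$, which is either bounded (when $\mathrm{stab}(P)\cap\pi_1(S_x)$ is trivial) or coarsely a lift of some $\sigma_i$ (when the intersection is infinite cyclic), so the $\dpe$-geodesic between points of $\til S_x$ tracks the intrinsic $\til S_x$-geodesic throughout.

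Your Dehn filling approach aims for a stronger conclusion --- an explicit quasi-isometry $(\til M_e,\dpe)\simeq\til S$ --- but the invocation of Osin/Groves--Manning is not clean. The relatively hyperbolic Dehn filling theorems require the filling slopes to lie outside a fixed finite subset of each peripheral; the meridians $\mu_j$ are specific primitive elements of $\Z^2$ and there is no a priori reason they satisfy this. Your remark that ``translation length can be made arbitrarily large by shrinking the cusp neighborhoods'' adjusts the horospherical metric, not the combinatorial depth of $\mu_j$ in $P_j$ with respect to a fixed generating set, which is what the theorem demands. Moreover, even granting the filling conclusion, you still need to identify the \emph{metric} partial electrification of $\til M_e$ with the group-theoretic cone-off of $\pi_1(M_e)$ along $\langle\mu_j\rangle$-cosets; this is the step you flag as an obstacle, and it is not a formality. (Note also that the quotient $\pi_1(S)$ does not act on $(\til M_e,\dpe)$, since the normal closure $\langle\langle\mu_j\rangle\rangle$ does not act trivially, so ``equivariantly quasi-isometric'' needs reinterpretation.) Your proposed direct geometric alternative --- the tree-of-hyperbolic-pieces picture --- is in fact much closer to what \cite{mahan-pal} does, and would work; but it is not ``the same content'' as the Dehn filling argument, and is the more robust path here.
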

 
 \begin{proof} Since a hyperbolic structure on $M_e$ may be chosen so that it has
 	convex boundary,
 	$\til M_e$ is strongly hyperbolic relative to $\PP$. (This is a consequence of the main theorem of \cite{farb-relhyp}.)
 	 The second conclusion is then a special case of \cite[Lemma 1.20]{mahan-pal}.

 	 Let $d$ denote the metric on $\til M_e$ lifted from the intrinsic path-metric on $M_e$. Let $d_e$ denote the electric metric on $\til M_e$ after electrifying
 	 the elements of $\PP$ as in \cite{farb-relhyp}.
 For $u, v \in \til S_x$, let $\gamma_{uv}, \gamma_{uv}^e, \gamma_{uv}^p$ denote geodesics with respect to $d, d_e,\dpe$ respectively.
 The second conclusion will follow from the following two facts:
 \begin{enumerate}
 \item $\gamma_{uv}, \gamma_{uv}^e, \gamma_{uv}^p$  track each other (uniformly, independent of $u, v$) away from $\PP$ by \cite[Lemma 1.21]{mahan-pal},
 and Lemma 4.5 and Proposition 4.6 of \cite{farb-relhyp}. (See Lemma~\ref{lem-track} below for a slightly more general statement.)
 \item the nearest-point projections of elements of $\PP$ equipped with $\dpe$ onto
 $\til S_x$ are either uniformly bounded in diameter, or
  uniform quasi-isometric embeddings. 
 \end{enumerate}
 In fact, any $P \in \PP$ is stabilized by a conjugate of 
 $\pi_1(N_\ep(\sigma_i))=\Z^2$ for some $i$. Let $stab(P)$ denote the stabilizer of $P$.
 Then, $stab(P) \cap \til S_x$ is either trivial or infinite cyclic.
 
 In the former case,   $\gamma_{uv}^e$ (after a small isotopy if necessary) may be assumed to be disjoint from $P$. In the latter case, if $\gamma_{uv}^e$ does enter
and exit $P$ at $y, z$ respectively, then there exist $y_1, z_1 \in  \til S_x$ such that
the geodesic joining $y_1, z_1 $ lies uniformly close to an elevation of
$\sigma_i$. It follows that $\gamma_{uv},  \gamma_{uv}^p$ track each other throughout
their lengths. The third conclusion  follows.
 \end{proof}

  \noindent {\bf Partial electrification of $F, \til F$:}\\
  The metric on each of the atoms after drilling (and before partial electrification) is denoted by $d$. Equip $F$ with a $C^0$ piecewise Riemannian metric that is bi-Lipschitz to the metric $d$ on each of the atoms. We refer to this metric on $F$ also by $d$. Now, consider rectifiable paths $\sigma: [0,1] \to F$ consisting of finitely many pieces, each of which is  contained
  in an atom.  The length of $\sigma$ 
  is declared to be the sum of the lengths of these subpaths. Replacing $d$ on each atom by the partially electrified metric $\dpe$ on that atom, we obtain a 
  partially electrified path pseudo-metric, also denoted  $\dpe$, on $F$.
  
  Lifting $d, \dpe$ to the universal cover $\til F$ we obtain a metric $d$ and a 
   partially electrified path pseudo-metric  $\dpe$ respectively. The distance between
   a pair of point $u, v$ is then obtained by passing to an infimum over all  paths $\sigma$ as above joining $u, v$.
   
   \begin{rmk}\label{rmk-pel}
   	The partial electrification construction above is adapted from \cite{mahan-reeves} (see \cite[Section 1.3]{mahan-pal} for a summary).
   \end{rmk}

\subsection{Partial electrification and relative hyperbolicity}\label{sec-mainrh}

Let $\til F$ denote the universal cover of $F$. We lift the pseudo-metric $\dpe$ to 
$\til F$ to obtain a partially electrified metric on $\til F$ denoted again by 
$\dpe$. The following is the main theorem of this section.

\begin{theorem}\label{thm-tilfdpelhyp}
	$(\til F, \dpe)$ is hyperbolic.
\end{theorem}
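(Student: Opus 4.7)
The natural approach is to apply the Bestvina--Feighn combination theorem to the tree of spaces structure that $\til F$ inherits from the graph-of-groups decomposition of $G=\pi_1(F)$ over $\GG^*$ described in Section~\ref{sec-prel-bdl}. The vertex spaces are the universal covers $(\til M_e, \dpe)$ of drilled and undrilled atoms (for vertices of $\GG^*$ corresponding to edges $e \in \GG$) and copies of $\til S \times \Star(v)$ (for vertices of $\GG^*$ corresponding to vertices $v \in \GG$); the edge spaces are copies of $\til S \cong \Hyp^2$. Lemma~\ref{lem-atomshyp} supplies exactly the input that Bestvina--Feighn requires on vertex and edge spaces: every atom vertex space $(\til M_e, \dpe)$ is $\delta$-hyperbolic, and every edge-to-vertex inclusion $\til S \hookrightarrow (\til M_e,\dpe)$ is a uniform $(K,\ep)$-quasi-isometric embedding. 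The $\til S \times \Star(v)$ vertex spaces are trivially hyperbolic with $\til S$ qi-embedded.

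The heart of the proof, and the main obstacle, is verifying the Bestvina--Feighn \emph{hallway flare condition} on $(\til F, \dpe)$. By a standard compactness argument, a failure of flare would produce essential hallways of arbitrarily large length with uniformly bounded $\dpe$-girth, and after passing to a limit these would manifest as essential annuli in $\til F$ (in the sense of Definition~\ref{def-hallway}) with bounded girth and unbounded length. The plan is to rule such configurations out using the classification of essential annuli in $F_S$ given in Section~\ref{sec-essentialannuli}. Concretely:
\begin{enumerate}
\item \emph{Non-backtracking annuli.} If $A_S$ is a non-backtracking essential annulus intersecting drilled atoms $M_a$ and $M_b$, then Lemma~\ref{lem-bddbreadth} (whose proof uses convex cocompactness of $\pi_1(\GG)$ in $MCG(S)$ via \cite{kl-coco,hamen} together with Lemma~\ref{lem-fill} on filling by $\Phi$-iterates of drilled curves) bounds $d_{\til\GG}(a,b) \leq D$. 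Hence the length of such an annulus in the Bass--Serre tree is uniformly bounded.
\item \emph{Annuli containing an atomic wrapping annulus.} Lemma~\ref{lem-1wrap} guarantees there is exactly one such elementary wrapping piece $B_S$, with core curve freely homotopic to a power $\eta^m$ of some drilled curve. The partial electrification collapses the meridian direction on $\partial N_\ep(\eta)$ (this is exactly the direction along which wrapping accumulates), so the $\dpe$-girth of $B_S$ is bounded by one longitudinal circle, independent of $m$. The annulus may extend past $B_S$ by non-backtracking pieces with the same core curve $\eta$, but these pieces are themselves non-backtracking essential annuli, and the argument of (1) applies to bound their breadth.
\end{enumerate}
Thus essential hallways in $(\til F,\dpe)$ of bounded girth have uniformly bounded length, and the hallway flare condition holds vacuously once the length exceeds this uniform bound.

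With the three Bestvina--Feighn hypotheses (hyperbolic vertex spaces, qi-embedded edge spaces, hallway flare) all verified, their combination theorem yields that $(\til F,\dpe)$ is $\delta'$-hyperbolic. The hardest step is the translation from geometric essential annuli in $\til F$ to Bestvina--Feighn hallways: one must argue that a counterexample to flare necessarily closes up (or asymptotically closes up) to one of the two types of annuli classified in Section~\ref{sec-essentialannuli}, so that Lemmas~\ref{lem-fill}, \ref{lem-1wrap}, and~\ref{lem-bddbreadth} actually apply. This is a routine limiting/ultrafilter argument in the style of \cite{BF} and \cite{mahan-reeves}, and with it in place the theorem follows.
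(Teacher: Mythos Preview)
Your overall framework is correct and matches the paper: Bestvina--Feighn applied to the tree-of-spaces structure, with Lemma~\ref{lem-atomshyp} supplying hyperbolicity of vertex spaces and qi-embedding of edge spaces. The flare verification, however, has a genuine gap.

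You claim that essential annuli of bounded $\dpe$-girth have uniformly bounded length, so that flare holds vacuously. This is false. Consider a non-backtracking annulus $\sigma \times [0,n]$ supported entirely in an undrilled stretch of $F_S$ (or one whose core curve is not homotopic to any drilled curve). Lemma~\ref{lem-bddbreadth} says nothing about such an annulus: it only bounds the distance between two drilled atoms that the annulus meets, and this annulus meets none (or at most one). Such annuli exist for arbitrarily large $n$; they are simply essential annuli in the undrilled bundle $E_S$. The same problem arises in your Case~2: the non-backtracking extensions $A_S^\pm$ past the wrapping piece $B_S$ lie in undrilled regions and have no second drilled atom to bound against via Lemma~\ref{lem-bddbreadth}.

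What actually controls these undrilled annuli is not a length bound but genuine flaring, and this is exactly where the hyperbolicity of $\Gamma = \pi_1(E)$ enters. The paper invokes the converse to Bestvina--Feighn (Proposition~\ref{prop-conversebf}, from \cite{mahan-sardar}): since $E$ is hyperbolic, $E_S$ satisfies the weak annuli-flare condition, so any essential annulus in $E_S$ of length $2m$ and girth $\geq H$ is $\lambda$-hyperbolic. The paper then chooses $m$ so that $2m > D$ (forcing at most one drilled atom), and argues: if the core curve is not a drilled-curve power, the annulus lives in $E_S$ and flares by Proposition~\ref{prop-conversebf}; if it wraps, the two tails $A_S^\pm$ are annuli in $E_S$ starting from curves $\theta^\pm$ of uniformly bounded length, and these flare one-sidedly, again by Proposition~\ref{prop-conversebf}. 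The flare condition is not vacuous---it is precisely the shadow of hyperbolicity of $E$---and your proposal omits this input entirely.
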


To prove Theorem~\ref{thm-tilfdpelhyp}, we shall use the Bestvina-Feighn combination Theorem~\ref{bestvina-feighn}.

\subsubsection{The Bestvina-Feighn Combination Theorem}\label{sec-bf}

 \begin{defn}\label{def-bf}\cite{BF} Let $\GG$ be a graph, and $Y$
	a graph of spaces with base graph $\GG$ so that the maps of edge-spaces to vertex spaces are injective at the level of the fundamental group. Let $X=\til Y$ be the universal cover, and
	$ \TT$ the resulting tree of spaces, whose vertex and edge   spaces are universal covers of vertex and edge   spaces of $Y$. Suppose that the following hold: 
	\begin{enumerate}
		\item vertex spaces \(\{X_v\}\) and edge spaces \(\{X_e\}\) are all $\delta-$hyperbolic metric spaces for some $\delta>0$
		\item there exist $K \geq 1, \ep \geq 0$ such that the the maps of edge-spaces to vertex spaces for $X$ are all $(K, \ep)-$quasi-isometric embeddings.
	\end{enumerate} 
	Then $Y$ is said to be a graph of hyperbolic metric spaces satisfying the  qi-embedded condition.
\end{defn}

Recall the notion of  hallways and annuli from Definition~\ref{def-hallway}.

\begin{defn}\label{def-hyphallway}\cite[p. 87]{BF} 
	For \(\lambda > 1\), a hallway  \(\mathcal{H}\) is said  to be \emph{\(\lambda\)-hyperbolic} if
\[
l(\mathcal{H}(\{0\} \times I)) \leq \frac{1}{\lambda}\, \text{max}\{l(\mathcal{H}(\{-m\} \times I)), l(\mathcal{H}(\{m\} \times I))\},
\]
where \(l\) denotes the length of the path in the corresponding edge space.

	Let \(\rho > 0\). Given \(i \in \{-m, -m + 1, \dots, m\}\), denote the vertex space that \(\mathcal{H}([i, i+1] \times I)\) lies in by \({X_{v_i}}\). The hallway is \emph{\(\rho\)-thin} if for all such \(i\) and for any \(t \in I\), \(d_{{X_{v_i}}}(\mathcal{H}(i, t), \mathcal{H}(i+1, t)) < \rho\).
	
	 The \emph{girth} (resp.  \emph{length})  of the annulus \(\mathcal{A}\) is the girth  (resp.  \emph{length})  of the induced hallway \(\widetilde{\mathcal{A}}\). 

\end{defn}
	Similarly,  the rest of the terminology, i.e.\  hyperbolicity, thinness, essentiality, for 
	the annulus \(\mathcal{A}\), is defined via \(\widetilde{\mathcal{A}}\).\\
	
\noindent {\bf The annuli flare condition:}
The graph of spaces $Y$ (with base graph $\GG$)  satisfies the \emph{annuli flare condition} if there exists \(\lambda > 1\)  and \(m \geq 1\) such that the following holds: given any \(\rho > 0\), there exists a constant \(H(\rho)\) so that whenever \(\mathcal{A}\) is a \(\rho\)-thin essential annulus of length \(2m\) and girth at least \(H(\rho)\), \(\mathcal{A}\) is \(\lambda\)-hyperbolic.  The graph of spaces $Y$ satisfies the  \emph{weak annuli
flare condition} if there are numbers $\lambda > 1, m > 1$, and $H$ such that
any $4\delta$-thin essential annulus of length $2m$ and girth at least $H$ is
$\lambda-$hyperbolic. The following statement gives  the Bestvina-Feighn Combination Theorem in a consolidated form.

\begin{theorem}\cite{BF}\cite[Theorem 3.2]{BFcorr}\label{bestvina-feighn}
	Let $\Pi: Y \to  \GG$ be a graph of spaces whose vertex and edge spaces have $\delta-$hyperbolic universal covers for some $\delta > 0$. If $\Pi: Y \to  \GG$ satisfies the following conditions,  then the universal cover $X$ of $Y$ is
	hyperbolic:
	\begin{enumerate}
	\item the quasi-isometrically embedded condition 
	\item the annuli flare
	condition or the weak annuli flare
	condition.
	\end{enumerate}
\end{theorem}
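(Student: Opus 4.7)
The plan is to verify that the universal cover $X$ satisfies a uniform thin-triangles condition, using the Bass-Serre tree $T$ (the underlying tree of the tree of spaces) as the organizing skeleton. The natural projection $p : X \to T$ has fibers the vertex spaces $X_v$, each of which is $\delta$-hyperbolic by hypothesis; the goal is to upgrade this fiberwise hyperbolicity to global hyperbolicity using the qi-embedded condition and the flare condition to control how geodesics behave across adjacent vertex spaces.

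The first step is a skeleton construction. Given $x,y \in X$ with $p(x) = v_0, v_1, \ldots, v_n = p(y)$ the geodesic in $T$, I would choose a point in each intermediate edge space $X_{e_i}$ near the coarse nearest-point projection of $x$ (from the left) and $y$ (from the right); this is well-defined up to bounded error because each $X_{e_i}$ is $(K,\ep)$-qi-embedded into the two adjacent hyperbolic vertex spaces, so standard projection estimates inside each $X_{v_i}$ apply. Connecting consecutive chosen points by geodesics in the vertex spaces $X_{v_i}$ produces a \emph{skeleton path} from $x$ to $y$. I would then show that any such skeleton is a $(K',\ep')$-quasigeodesic in $X$ and, more importantly, that every genuine geodesic from $x$ to $y$ in $X$ lies in a uniformly bounded neighborhood of it.

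The heart of the argument is the flare mechanism, which rules out large excursions of a geodesic away from its skeleton. Suppose a geodesic backtracks along $T$: then two nearby segments project to the same edge of $T$ but traverse it in opposite directions. Pairing these segments with connecting arcs in edge spaces yields an essential hallway in the sense of Definition~\ref{def-hallway}. The qi-embedded hypothesis bounds the ``drift'' of the hallway, making it $\rho$-thin for a universal $\rho$; if its girth were large, the annuli flare condition would force $\lambda$-hyperbolicity, hence exponential growth of the edge-space geodesic lengths with the height of the hallway. But along a true geodesic these lengths grow only linearly, giving a contradiction and so bounding the backtracking uniformly. Under only the weak flare condition, an additional rescaling/pigeonhole step is needed to promote control from $4\delta$-thin annuli to arbitrary $\rho$-thin ones — one partitions a thick hallway into $4\delta$-thin sub-slabs of matching heights and invokes the weak hypothesis iteratively.

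With skeletons controlling geodesics, hyperbolicity follows quickly: a geodesic triangle $xyz$ in $X$ projects to a tripod in $T$ with center $c$, and each side tracks its skeleton within a uniform constant by the previous step; the three skeletons meet, up to bounded error, in a single vertex space $X_c$ over $c$, where the thin-triangles bound is available by hyperbolicity of $X_c$. Combining the tracking estimates with this intrinsic bound gives a uniform thin-triangles constant for $X$. The main obstacle, as Bestvina and Feighn's original argument emphasizes, is the iteration of the flare condition along hallways of arbitrary length — converting a single-scale estimate into genuine exponential divergence — and carefully managing the quantifier order ``given $\rho$, there is $H(\rho)$'' so that constants chosen in advance work uniformly for all pairs $x,y$; this is where the technical delicacy of the $\lambda$-hyperbolic hallway framework really pays off.
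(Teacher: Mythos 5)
This statement is the Bestvina--Feighn combination theorem, which the paper cites from \cite{BF} and \cite[Theorem 3.2]{BFcorr} and does not prove; there is therefore no internal proof to compare your sketch against, and I will instead assess it against the argument in those references.

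Your outline captures the broad strategy of Bestvina--Feighn --- use the Bass--Serre tree to organize geodesics, use the qi-embedded condition for local control within vertex spaces, and use the flare condition to prevent long excursions --- but two points deserve flagging. First, your description of the flare mechanism via ``backtracking geodesics'' is slightly off-register: an essential hallway, by definition, does \emph{not} backtrack in $\TT$, so the object you pair up from a backtracking geodesic is not the object the flare hypothesis applies to. Bestvina--Feighn's actual use of flaring is to show that the candidate paths (built over a geodesic in $\TT$, hence non-backtracking) are quasigeodesics: if a competitor were much shorter, one could build a long, thin, essential hallway of bounded girth near the skeleton, and flaring would force its girth to grow, a contradiction. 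Excursions that backtrack in $\TT$ are ruled out separately and more cheaply (essentially by qi-embedding of edge spaces). Second, your remark that the weak flare condition requires ``an additional rescaling/pigeonhole step'' is correct in spirit, but this is precisely the content of the addendum \cite{BFcorr} --- the original paper's argument has a gap here, which is why the paper cites the correction. A blind proof that simply says ``iterate'' elides the delicate part that the correction supplies (the promotion from $4\delta$-thinness to arbitrary $\rho$-thinness, which is not a formal pigeonhole). If you intend this as a full proof rather than a summary, those two steps would need to be carried out in detail; as an account of the strategy, it is serviceable.
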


The following definition adapts \cite[Definition 4.26]{mahan-tight} to our context.
\begin{defn}\label{def-flareinall}
	We say that,  a vertex space $Y_v$  \emph{flares in all directions weakly} 
	if there are numbers $\lambda > 1, m > 1$, and $H$ such that if $\AAA: S^1 \times [0,m] \to Y$ is any $4\delta$-thin essential annulus of length $m$ and girth at least $H$ \emph{starting at $Y_v$} (in the sense of Definition~\ref{def-hallway}), then 
	the associated lifted hallway  \(\mathcal{H}\) satisfies
	\[
	l(\mathcal{H}(\{0\} \times I)) \leq \frac{1}{\lambda}\, l(\mathcal{H}(\{m\} \times I)).
	\]
\end{defn}

We shall need a modification of Theorem~\ref{bestvina-feighn} to guarantee global quasiconvexity of a vertex space.
The following now adapts \cite[Proposition 4.27]{mahan-tight} to our context. 

\begin{cor}\label{cor-qc}
Let $\Pi: Y \to  \GG$ be a graph of spaces satisfying the conditions of Theorem~\ref{bestvina-feighn}. Further, let $Y_v \subset Y$ be a vertex space that flares in all directions. Then $\til Y_v$ is quasiconvex in $\til Y$.
\end{cor}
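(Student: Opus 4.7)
The plan is to argue by contradiction and exploit the flare-in-all-directions hypothesis to forbid geodesics in $\til Y$ between points of $\til Y_v$ from wandering far from $\til Y_v$. By Theorem~\ref{bestvina-feighn}, $\til Y$ is $\delta$-hyperbolic for some $\delta>0$. If $\til Y_v$ were not quasiconvex, we could find sequences $p_n,q_n\in \til Y_v$ and $\til Y$-geodesics $[p_n,q_n]$ containing points at distance going to infinity from $\til Y_v$. Using the tree-of-spaces structure, decompose $[p_n,q_n]$ into maximal \emph{excursions} away from $\til Y_v$: subsegments $\gamma_n$ with endpoints in $\til Y_v$ whose interiors miss $\til Y_v$. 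The hypothesis forces at least one such $\gamma_n$ to project in the Bass-Serre tree $\TT$ to a non-trivial loop at the vertex corresponding to $\til Y_v$; since $\TT$ is a tree, this projection goes from $v$ out to a vertex $w_n$ at tree-depth $d_n$ and returns, with $d_n\to\infty$.

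From $\gamma_n$ we build an essential thin hallway $\HH_n\colon [0,d_n]\times I\to \til Y$ starting at $\til Y_v$ in the standard Bestvina--Feighn manner. The two sides of the hallway are the two halves of $\gamma_n$ (before and after its deepest point in $\TT$). At each tree-depth $j\le d_n$, the excursion $\gamma_n$ crosses the relevant edge space $\til Y_{e_j}$ at two points $x_j^\pm$, one per half; set $\HH_n(\{j\}\times I)$ to be a geodesic in $\til Y_{e_j}$ from $x_j^-$ to $x_j^+$. Essentiality is automatic: the projection of $\HH_n$ to $\TT$ is exactly the non-backtracking geodesic from $v$ to $w_n$. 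For thinness, observe that the two halves of $\gamma_n$ are subgeodesics of the single geodesic $[p_n,q_n]$ in the $\delta$-hyperbolic space $\til Y$ that pass through nearby points of each edge space $\til Y_{e_j}$; by $\delta$-hyperbolicity and the qi-embedded condition on edge spaces, the horizontal slices $\HH_n([j,j+1]\times\{t\})$ have length bounded uniformly, giving $4\delta$-thinness after adjusting the ambient constant.

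Let $G_j=l(\HH_n(\{j\}\times I))$ be the girths. By $4\delta$-thinness and the qi-embedding of edge spaces into vertex spaces, consecutive girths satisfy
\[
|G_{j+1}-G_j|\le C
\]
for a uniform constant $C$, so $G_j\le G_0+Cj$ grows at most linearly. On the other hand, the flare-in-all-directions hypothesis (Definition~\ref{def-flareinall}) applied to $\HH_n$ gives, once $G_0\ge H$, that $G_m\ge \lambda G_0$, and hence by iteration $G_{km}\ge \lambda^k G_0$ as long as the intermediate girths stay $\ge H$. Combining the two bounds yields $\lambda^{\lfloor d_n/m\rfloor} G_0 \le G_0+Cd_n$, which is impossible for $d_n$ large. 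Hence $G_0<H$, and consequently every $G_j$ is uniformly bounded, independently of $n$. Translating back: at each tree-depth $j$, the two points $x_j^\pm$ lie uniformly close in $\til Y_{e_j}\subset\til Y$, so the excursion $\gamma_n$ stays in a uniformly bounded neighborhood of $\til Y_v$ in $\til Y$; this contradicts the assumption that points of $[p_n,q_n]$ get arbitrarily far from $\til Y_v$.

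The main technical obstacle is verifying that the excursion $\gamma_n$ actually yields an essential $4\delta$-thin hallway of the form required by Definition~\ref{def-flareinall}. A priori $\gamma_n$ may cross edge spaces in complicated patterns and backtrack inside individual vertex spaces, so some care is needed to extract the clean non-backtracking sub-hallway and to verify that the horizontal slices really are $4\delta$-short. Handling this rigorously follows the template in the proof of \cite[Proposition~4.27]{mahan-tight}, which in turn adapts Bestvina--Feighn's thinning arguments from \cite{BF}; once this is in place, the girth estimates in the last paragraph close the argument.
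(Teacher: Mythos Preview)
Your strategy---contradiction via a hallway extracted from a deep geodesic excursion---differs from the paper's. The paper instead invokes the ladder construction of \cite[Section~3]{mitra-trees}: for a geodesic $\lambda\subset\til Y_v$ one builds a uniformly quasiconvex tree-of-intervals $\LL_\lambda\subset\til Y$ containing $\lambda$; the flare-in-all-directions hypothesis then shows $\lambda$ is a uniform quasigeodesic inside $\LL_\lambda$, hence in $\til Y$. This constructive route bypasses the need to extract thin hallways from ambient $\til Y$-geodesics, which is precisely where your argument runs into trouble.

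The central gap is thinness. Definition~\ref{def-flareinall} applies only to $4\delta$-thin annuli, where $\delta$ is the common hyperbolicity constant of the vertex and edge spaces---not the (typically larger) ambient constant for $\til Y$ supplied by Theorem~\ref{bestvina-feighn}. Your justification needs the pieces of the $\til Y$-geodesic $\gamma_n$ between consecutive edge-space crossings to be short in the \emph{vertex-space} metric $d_{X_{v_j}}$; nothing forces this, since a $\til Y$-geodesic can linger arbitrarily long inside a single vertex space. For intermediate $t$ you also need the included edge-space geodesics $[x_j^-,x_j^+]$ and $[x_{j+1}^-,x_{j+1}^+]$ to fellow-travel inside $X_{v_j}$, an additional nontrivial claim. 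Separately, the iteration ``$G_{km}\ge\lambda^k G_0$'' cannot use Definition~\ref{def-flareinall} beyond the first step, since the sub-hallway over $[m,2m]$ no longer starts at $\til Y_v$; you must feed in the two-sided annuli flare condition from the hypotheses of Theorem~\ref{bestvina-feighn} to continue. Finally, your closing step is a non-sequitur: bounded girths $G_j$ say that $x_j^-$ and $x_j^+$ are close to \emph{each other}, not to $\til Y_v$---they sit at tree-depth $j$. The correct finish is that $G_0$ bounded forces $d_{\til Y}(x_0^-,x_0^+)$ bounded, so the $\til Y$-geodesic $\gamma_n$ between them is short, contradicting that it reaches depth $d_n\to\infty$.
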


\begin{proof}
Since the context of \cite[Proposition 4.27]{mahan-tight} is slightly different, we sketch the mild modifications necessary. We recall a construction from \cite[Section 3]{mitra-trees}
here. It follows from \cite[Theorem 3.8]{mitra-trees} that there exists $C >0$ such that the following holds.
Given any geodesic $\lambda \subset (\til Y_v, d_v)$, there exists a
`ladder' $\LL_\lambda \subset \til Y$ containing $\lambda$, such that 
$\LL_\lambda$ is $C-$quasiconvex. Hyperbolicity of $(\til Y, d)$ now guarantees that for all such geodesics $\lambda \subset (\til Y_v, d_v)$,  $\LL_\lambda$ is hyperbolic.
The construction of $\LL_\lambda$ in \cite[Section 3]{mitra-trees} now shows that $\til \Pi: \til Y \to  \til \GG$ restricts to $\til \Pi_\lambda: \LL_\lambda \to  \TT$, where $\TT\subset \til \GG$
is a tree. Further, for any vertex $w\in \TT$, $\til \Pi_\lambda^{-1}(w)$ is a geodesic segment in the vertex space $\til \Pi^{-1}(w) \subset \til Y$. Thus, $\LL_\lambda$ is a tree
of spaces, where each vertex space is isometric to an interval. The hypothesis that  $Y_v \subset Y$ be a vertex space that flares in all directions guarantees that $\LL_\lambda$ flares
in all directions also. Hence $\lambda$ is a quasigeodesic (with uniform constants) in $\LL_\lambda$. Since $\LL_\lambda$ is $C-$quasiconvex, 
 $\lambda$ is a quasigeodesic (with uniform constants) in $\til Y$.
\end{proof}

\begin{rmk}\label{rmk-qc}
More generally, if $Z \subset Y_v$ is a subspace such that 
\begin{enumerate}
	\item the inclusion induces an injection at the level of fundamental groups,
	\item $\til Z$ is qi-embedded in $\til Y_v$,
\end{enumerate}
then an auxiliary vertex $w$ and an edge $e=[w,v]$ may be added to the base graph $\GG$, so that $Y_w=Y_e=Z$. Definition~\ref{def-flareinall} and Corollary~\ref{cor-qc} may thus be applied to such subspaces $Z$ of $Y_v$ as well.
If $Z$ flares in all directions, then $\til Z$ is quasiconvex in $\til Y$
by Corollary~\ref{cor-qc}.
\end{rmk}

We also record the following, where we assume implicitly that there is a cocompact group action so that  the annuli flare
condition makes sense. (We also refer the reader to \cite[p. 90]{BF} and \cite[Section 4]{BFcorr} for an analogous hallways flare
condition.)

\begin{lemma}\label{lem-subtree}
Let $\Pi_\TT: X \to \TT$ be a tree of spaces obtained as a universal cover of a 
graph of compact spaces. Suppose that each vertex space $X_v$ and edge space $X_e$ 
of $X$ is $\delta-$hyperbolic. Further, suppose that  the following conditions are satisfied:
\begin{enumerate}
\item the quasi-isometrically embedded condition 
\item the annuli flare
condition or the weak annuli flare
condition.
\end{enumerate}
Let $\TT_0$ be a subtree of $\TT$ and $X_0 = \Pi_{\TT}^{-1}(\TT_0)$. Then $X_0$ is hyperbolic.
\end{lemma}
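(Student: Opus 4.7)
The plan is to verify the hypotheses of the Bestvina--Feighn Combination Theorem (Theorem~\ref{bestvina-feighn}) for $X_0$ viewed as a tree of spaces over the subtree $\TT_0$, and then invoke that theorem to conclude hyperbolicity of $X_0$. The point is that each hypothesis is purely local or semi-local in the base tree, so it restricts well to a subtree.

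First, $X_0 = \Pi_\TT^{-1}(\TT_0)$ inherits a tree-of-spaces structure over $\TT_0$: for each vertex $v \in \TT_0$, the vertex space is the original $X_v$, and for each edge $e \subset \TT_0$, the edge space is the original $X_e$, with the same inclusions into vertex spaces. Hence the vertex and edge spaces of $X_0$ form a subcollection of those of $X$, so they are all $\delta$-hyperbolic, and the qi-embedded condition on edge-to-vertex inclusions is inherited verbatim from $X$ with the same constants $(K,\epsilon)$.

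Next, I would verify flaring. The key observation is that any essential hallway $\mathcal{H}: [-m,m]\times I \to X_0$ is, by definition, also an essential hallway in $X$: its projection to $\TT_0$ is a non-backtracking path, and since $\TT_0 \hookrightarrow \TT$ is an isometric embedding of trees, it remains non-backtracking in $\TT$. Moreover, the thinness and girth of $\mathcal{H}$ are computed in the vertex spaces $X_{v_i}$, which are the same whether we view $\mathcal{H}$ as sitting in $X_0$ or in $X$. Consequently the flare constants $(\lambda, m, H)$ provided by the hypothesis on $X$ apply verbatim to essential hallways in $X_0$.

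The mild subtlety flagged in the parenthetical remark preceding the statement is that $X_0$ need not admit a cocompact group action, so the annuli formulation of flaring does not literally make sense on $X_0$. This is handled by invoking the equivalent hallways flare formulation of the combination theorem (see \cite[Section~4]{BFcorr} and \cite[p.~90]{BF}), which needs no quotient and for which the inheritance argument above is valid. With all hypotheses verified for the tree of spaces $X_0 \to \TT_0$, Theorem~\ref{bestvina-feighn} (in its hallways formulation) yields that $X_0$ is hyperbolic. The main conceptual point of the lemma is thus simply that each ingredient of the combination theorem is monotone under passage to a subtree of the base.
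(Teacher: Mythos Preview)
Your proof is correct and follows essentially the same approach as the paper: observe that every hallway in $X_0$ is a hallway in $X$, so the qi-embedded and flare conditions are inherited, and then invoke Theorem~\ref{bestvina-feighn}. Your treatment is in fact more careful than the paper's, since you explicitly address the cocompactness issue by passing to the hallways formulation of flaring, whereas the paper only gestures at this in the parenthetical remark preceding the lemma.
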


\begin{proof}
We note that any hallway in $X_0$ is also a hallway in $X$. In particular, 
$\Pi_\TT: X_0 \to \TT_0$ is a tree of spaces satisfying 
\begin{enumerate}
	\item the quasi-isometrically embedded condition 
	\item the annuli flare
	condition or the weak annuli flare
	condition.
\end{enumerate}
The Corollary is now a direct consequence of Theorem~\ref{bestvina-feighn}.
\end{proof}

\subsubsection{Hyperbolicity of partially electrified bundle: Proof of Theorem~\ref{thm-tilfdpelhyp}}\label{sec-hypofdpe} $ $\\
To prove Theorem~\ref{thm-tilfdpelhyp}, it suffices 
to check the two conditions of Theorem~\ref{bestvina-feighn}.\\

\noindent {\bf Hyperbolicity of vertex spaces and the quasi-isometrically embedded condition:} This follows from Lemma~\ref{lem-atomshyp}. 
\\

\noindent {\bf Identifying  $\rho-$thin annuli:} It remains  to prove the annuli flare condition.  We recall the description of essential annuli in $F_S$ from Section~\ref{sec-essentialannuli}. Let $D$ be as in Lemma~\ref{lem-bddbreadth}. We choose $m$ in the annuli flare condition so that $2m > D$. Hence, any essential annulus
$A_S$ in $F_S$ can intersect \emph{at most one drilled atom}. Thus, any essential annulus
$A_S$ in $F_S$ of length $2m$ can be of exactly one of the following three mutually
exclusive types:
\begin{enumerate}
\item[Case 1:] $A_S$ is a non-backtracking annulus with core curve having free homotopy type distinct from any of the drilled curves,
\item[Case 2:] $A_S$ contains an elementary wrapping annulus $B_S$ without back-tracking.
Here, $B_S$ wraps  around
$\partial N_\ep(\eta)$ for some $\eta$. The core curve of $A_S$ is  then freely homotopic to a (non-trivial power of)  $\eta$.
\item[Case 3:] $A_S$ contains an elementary back-tracking annulus $B_S$ wrapping around
$\partial N_\ep(\eta)$ for some $\eta$. The core curve of $A_S$ is  then freely homotopic to a (non-trivial power of)  $\eta$. 
\end{enumerate}

In cases 2, 3 above $A_S$ is the concatenation of 3 pieces:
\begin{enumerate}
\item the  elementary wrapping annulus $B_S$ (with or without back-tracking),
\item two non-backtracking annuli $A_S^\pm$, such that $A_S^\pm \cap B_S$ consist of 
 curves $\eta^\pm$ that are freely homotopic in $F_S$. If $B_S$ is an  elementary wrapping annulus without  back-tracking, then there exist distinct singular fibers $S_x^\pm$  of $F_S$ (bounding the atom of $F_S$ containing $B_S$) such that
 $\eta^\pm \subset S_x^\pm$.
 If $B_S$ is an  elementary wrapping annulus with  back-tracking, then there exists a single singular fiber $S_x$ of $F_S$  (a boundary component of the atom of $F_S$ containing $B_S$)
 such that $\eta^\pm \subset S_x$, and $\eta^\pm$ are freely homotopic within $S_x$.
\end{enumerate}

 Since 
$B_S$ wraps around
$\partial N_\ep(\eta)$ for some drilled curve $\eta$, and since $\partial N_\ep(\eta)$ is an elevation of one of finitely many  tori in $F$, 
the core curve of $A_S$ is the same as the core curve of
$B_S$ and hence is of the form $\gamma^n$ for some $n \in \natls$, where $\gamma$ is one of the drilled curves in $E$.\\

\noindent {\bf Checking the annuli flare condition:}\\
\noindent {\it Case 1: $A_S$ is a non-backtracking annulus with core curve having free homotopy type distinct from any of the drilled curves.} \\
 We start with the following converse to the Bestvina-Feighn combination theorem.
 
 \begin{prop}\label{prop-conversebf}
$E_S$ satisfies the weak annuli-flare condition.
 \end{prop}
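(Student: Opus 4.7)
The plan is to reduce the weak annuli flare condition for $E_S$ to the analogous condition for $E$ via the covering map $p : E_S \to E$, and then to deduce the latter from the hyperbolicity of $\Gamma = \pi_1(E)$.

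First, $E_S$ is the regular cover of $E$ corresponding to $\pi_1(S) \subset \Gamma$, with deck group $\pi_1(\GG)$, and the graph of spaces structure $E_S \to \til \GG$ is the pullback of $E \to \GG$ along the universal cover $\til \GG \to \GG$. In particular, $p$ is a local isometry sending vertex and edge spaces of $E_S$ isometrically onto those of $E$. Given any annulus $A : S^1 \times [-m, m] \to E_S$, the composition $p \circ A$ is an annulus in $E$ whose lift to $\til E = \til{E_S}$ is the same hallway as that of $A$. Consequently, $A$ and $p \circ A$ share the same length, girth, thinness parameter, and essentiality. It therefore suffices to establish the weak annuli flare condition for $E$.

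For $E$, I would invoke the hyperbolicity of $\Gamma$: by Hamenstadt \cite{hamen} and Kent-Leininger \cite{kl-coco}, this is equivalent to $\pi_1(\GG)$ being convex cocompact in the mapping class group $MCG(S)$, i.e.\ the orbit map $\pi_1(\GG) \to \CC(S)$ is a quasi-isometric embedding. For an essential annulus of length $2m$ whose core curve represents $\alpha \in \CC(S)$ on the middle fiber, the slices $A(S^1 \times \{i\})$ represent the monodromy translates $\phi_i(\alpha)$. The qi-embedding yields a linear lower bound $d_{\CC(S)}(\phi_{\pm m}\alpha, \alpha) \geq m/K - \epsilon$, and combined with the standard dictionary (via short markings and subsurface projections) relating curve-complex distance to hyperbolic length on $S$, the lengths of $\phi_{\pm m}(\alpha)$ on a fixed fiber metric must grow exponentially in $m$. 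For $m$ large enough and girth above a threshold $H$, this forces the required $\lambda$-hyperbolicity.

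Alternatively (and more intrinsically), this is the converse direction of the Bestvina-Feighn combination theorem: in the $\delta$-hyperbolic space $\til E$, a $4\delta$-thin hallway that failed to flare would constitute a long quasi-isometrically embedded strip of bounded width, contradicting hyperbolicity once $m$ is large. Such a flare characterization of hyperbolic metric bundles is carried out in \cite{mahan-sardar}. The main obstacle is quantifying the transfer from curve-complex divergence to metric flare in $\til E$; the intrinsic thin-quadrilateral argument bypasses this by working directly in the hyperbolic model, which I expect to be the cleanest route.
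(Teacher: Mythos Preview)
Your final approach---citing the converse to Bestvina--Feighn from \cite{mahan-sardar} to deduce the flaring condition from hyperbolicity of $\til E$---is exactly the paper's proof, which is a one-line appeal to \cite[Proposition 5.8]{mahan-sardar}. The curve-complex detour in your second paragraph is unnecessary (and the step converting linear growth in $d_{\CC(S)}$ into exponential growth of hyperbolic lengths is itself essentially the flaring statement you are after), so you are right to prefer the intrinsic route.
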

 
 \begin{proof}
 Since $E$ is hyperbolic, this is a special case of \cite[Proposition 5.8]{mahan-sardar}, where it is shown that $E$ satisfies a flaring condition.
 This is equivalent to hyperbolicity of hallways in $E_S$ and implies
 the weak annuli-flare condition.
 \end{proof}
 
 \begin{cor}\label{cor-weakannuliflare}
 Non-backtracking annuli  with core curve having free homotopy type distinct from any of the core curves satisfy the annuli-flare condition: more precisely, there exist $\lambda > 1$, $m>1$ such that if $A_S$ is a non-backtracking annulus with girth at least 1, it satisfies the weak annuli flare condition.
 \end{cor}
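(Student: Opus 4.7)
The plan is to deduce the corollary from Proposition~\ref{prop-conversebf} by transferring the weak annuli-flare condition from $E_S$ to the drilled bundle $F_S$, and then sharpening the girth hypothesis from ``at least $H$'' to ``at least $1$''.

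First, I would observe that any non-backtracking essential annulus $A_S$ in $F_S$ of the form $\sigma\times[p,q]$ whose core curve $\sigma$ is not freely homotopic to any drilled curve lies in the complement of all tubular neighborhoods $N_\ep(\eta)$. Indeed, by the classification of non-backtracking annuli in Section~\ref{sec-essentialannuli}, $\sigma$ necessarily lies in a component of $S\setminus \Sigma_n$, where $\Sigma_n$ is the subsurface filled by the monodromy iterates of the drilled curves, and so the product annulus $\sigma\times[p,q]$ meets none of the boundary tori. Consequently $A_S$ can be regarded canonically as a $4\delta$-thin essential annulus inside $E_S$, and its length, thinness, and girth measured in $(F_S,\dpe)$ agree with the corresponding quantities measured intrinsically in $E_S$.

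Next, I would apply Proposition~\ref{prop-conversebf} to obtain constants $\lambda_0 > 1$, $m_0 \geq 1$, $H_0 > 0$ such that every $4\delta$-thin essential annulus in $E_S$ of length $2m_0$ and girth at least $H_0$ is $\lambda_0$-hyperbolic. Combined with the previous paragraph, this immediately yields $\lambda_0$-hyperbolicity of $A_S$ whenever its girth exceeds $H_0$, since the flaring calculation depends only on lengths of edge-space geodesics, which coincide in the two settings.

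The remaining task is to lower the girth threshold from $H_0$ to $1$. Here I would use that $\sigma$, being a primitive non-drilled simple closed curve on $S$, represents an infinite-order non-peripheral element of the hyperbolic group $\pi_1(E)$, hence a loxodromic element of positive stable translation length. By the flaring statement in \cite[Proposition~5.8]{mahan-sardar}, which is the quantitative form of hyperbolicity of hallways in $E_S$, the length of the monodromy iterates of $\sigma$ along any $4\delta$-thin hallway in $E_S$ grows at least exponentially with fiber distance, at a rate depending only on $\delta$ and on the convex cocompactness constants of $Q=\pi_1(\GG)\hookrightarrow MCG(S)$. Thus, choosing $m$ large enough that
\[
\lambda_0^{\lfloor m/m_0\rfloor}\cdot 1 \;>\; H_0,
\]
any $4\delta$-thin essential hallway of length $2m$ and girth $\geq 1$ will have girth exceeding $H_0$ at some intermediate fiber of distance at most $m_0$ from an endpoint. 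Applying Proposition~\ref{prop-conversebf} to this intermediate subhallway of length $2m_0$ then yields the desired flaring for the full annulus, possibly after replacing $\lambda_0$ by a slightly smaller $\lambda>1$.

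The main obstacle is making the ``girth eventually exceeds $H_0$'' bootstrap uniform over the infinitely many free homotopy classes of non-drilled simple closed curves on $S$. I would address this by noting that the exponential growth rate of iterates along hallways is governed by the uniform qi-embedding of $\til\GG$ into $\CC(S)$ (as already used in Lemma~\ref{lem-bddbreadth}) together with hyperbolicity of $\pi_1(E)$, and is therefore independent of the particular non-drilled curve $\sigma$ under consideration. This uniformity is exactly what allows the bootstrap constant $m$ to be chosen once and for all.
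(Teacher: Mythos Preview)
Your first two steps are exactly the paper's proof: a non-backtracking annulus $A_S$ in $F_S$ is an essential annulus in $E_S$ (since it avoids all the drilled neighborhoods), and Proposition~\ref{prop-conversebf} then supplies the weak annuli flare constants. The paper stops there, and that is all that is actually used downstream (e.g.\ in Lemma~\ref{lem-dpeqc} via Definition~\ref{def-flareinall}, which still carries a girth threshold $H$). The phrase ``girth at least $1$'' in the statement is loose wording rather than a genuine strengthening, so your extra work in the third step is not needed.

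More importantly, the bootstrap you propose in that third step is circular as written. You invoke \cite[Proposition~5.8]{mahan-sardar} for ``exponential growth of iterates along any $4\delta$-thin hallway'', but that proposition \emph{is} the weak annuli flare statement---it comes with its own girth threshold $H_0$ and does not assert exponential growth starting from arbitrarily small girth. So you cannot use it to promote girth $\geq 1$ up to girth $\geq H_0$ before reapplying the flare condition; either the source already gives flaring from girth $1$ (in which case no bootstrap is needed) or it only gives flaring above $H_0$ (in which case the bootstrap has nothing to stand on). The inequality $\lambda_0^{\lfloor m/m_0\rfloor}\cdot 1 > H_0$ and the sub-hallway argument that follows therefore assume what you are trying to prove. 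Drop the third step entirely and your argument matches the paper's.
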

 
 \begin{proof}
Note that $A_S \subset F_S \subset E_S$. Since $A_S$ is a non-backtracking annulus in $F_S$, it is an essential annulus in $E_S$. Proposition~\ref{prop-conversebf} now gives the desired conclusion.
 \end{proof}
 
 \noindent {\it Cases 2, 3: $A_S$ is a wrapping annulus with core curve  freely homotopic to a power  of one of the drilled curves.} \\ We shall give a unified proof of these two cases.
 Let $B_S$ denote the  elementary wrapping annulus contained in $A_S$.
 Let $\gamma$ denote the drilled curve such that $B_S$ wraps around 
 $\partial N_\ep(\gamma)$. Choose an orientation of $\gamma$ and $n \in \natls$ such that the core curve  of 
 $B_S$ (and hence that of $A_S$) is freely homotopic to $\gamma^n$.
 Let $M_e$ denote the drilled atom of $F_S$ containing $B_S$.
 Let $S_x^\pm$ denote the boundary components of $M_e$, and $A_S^\pm$ denote the two components of $A_S \setminus \, Int(B_S)$. 
 Then each of the annuli $A_S^\pm$ is an essential annulus in $E_S$ with one boundary
 curve in $S_x^+ \cup S_x^-$. (If $A_S$ is back-tracking, then both boundary curves
 lie in the same surface boundary component. If $A_S$ is without back-tracking, then the boundary curves
 lie in  different surface boundary components.) Let $\theta^\pm$ denote the boundary
 curve of $A_S^\pm$ on $S_x^+ \cup S_x^-$.
 
It will help to explicate the special case where $n=1$, i.e.\ the core curve of $A_S$ is freely homotopic to $\gamma$. Then each of
 $A_S^\pm$ is a flaring annulus, with $l(\theta^\pm)$ uniformly close to the girth
 of $A_S$. This follows from the fact that the length $l(\theta^\pm)$ is uniformly bounded. Hence, in the formulation of Definition~\ref{def-hyphallway},  
 \[
 l(\mathcal{H}(\{0\} \times I)) \leq \frac{1}{\lambda}\,  l(\mathcal{H}(\{m\} \times I))\},
 \]
 for all $m$ large enough (i.e.\ we can ignore the negative direction of the hallway
 from Definition~\ref{def-hyphallway}). 
 Further, note that the $\dpe-$length of the annulus $B_S$ is uniformly bounded. This is because the meridian of $\partial N_\ep (\gamma)$ that $B_S$ wraps around has length
 zero. Hence the concatenation $A_S^+ \cup B_S \cup A_S^-$ satisfies the weak annuli flare condition.

 For general $n\in \natls$, any elevation of $\theta^\pm$ (freely homotopic to $\gamma^n$)
 to the universal cover $\til E_S$  gives a uniform quasigeodesic.  This follows from the fact that $\gamma$ is an elevation of one of the (finitely many) drilled curves. Hence, again, $l(\theta^\pm)$ is uniformly close to the girth
 of $A_S$. Again, $A_S^\pm$ satisfies the one-sided flare condition 
  \[
 l(\mathcal{H}(\{0\} \times I)) \leq \frac{1}{\lambda}\,  l(\mathcal{H}(\{m\} \times I))\},
 \]
 in the formulation of Definition~\ref{def-hyphallway}. The same argument from the previous paragraph now shows that 
  the concatenation $A_S^+ \cup B_S \cup A_S^-$ satisfies the weak annuli flare condition.\\
  
  Thus, the sufficient conditions of Theorem~\ref{bestvina-feighn} are satisfied by
  essential annuli $A_S$ in $F_S$, equipped with the partially electrified metric $\dpe$. Theorem~\ref{thm-tilfdpelhyp} follows. \hfill $\Box$
  
  \begin{rmk}
One place where the partial electrification metric $\dpe$ is essential in the above proof
is to conclude that the $\dpe-$length of $B_S$ is uniformly bounded.
  \end{rmk}

 \subsubsection{Consequences of Theorem~\ref{thm-tilfdpelhyp}}\label{sec-consqs}
  The proof of Theorem~\ref{thm-tilfdpelhyp} above gives some additional information that we shall need below. 
  Let $\til{\Pi}: \til{F_S} \to \TT$ be the tree of spaces for the universal cover $\til{F_S}$ (=$\til F$) of $F_S$. 
 Let $\TT_0 \subset \TT$ denote a subtree. Let $\til{\Pi}^{-1} (\TT_0) = X_0$, so that
 $\til{\Pi}: X_0 \to \TT_0$ is a tree of spaces.

\begin{defn}\label{def-bdydrilledcomp}
Let $v \in \TT_0$ be a  vertex such that 
\begin{enumerate}
	\item $\til M_v$ is an  atom.
	\item $\til S_x$ is a boundary component of  $\til M_v$ that is not contained in the
	boundary of any other vertex space of $\til{\Pi}: X_0 \to \TT_0$.
\end{enumerate} 
Then we say that $\til M_v$ is a \emph{boundary   atom of $X_0$} and 
$\til S_x$ is a \emph{boundary component of  $X_0$}. If, $\til M_v$ is drilled (resp.\
undrilled), we say that  $\til M_v$ is a \emph{drilled (resp.\
	undrilled) boundary   atom of $X_0$} and 
$\til S_x$ is a \emph{drilled (resp.\
	undrilled) boundary component of  $X_0$}. 
\end{defn} 

\begin{cor}\label{cor-bdydrilledqc}
Let $X_0$ be as above. Then $X_0$ is hyperbolic, and 
 any drilled boundary component $\til S_x$  of $X_0$ is $\dpe-$quasiconvex. Here
$\dpe$ denotes the restriction of the partially electrified pseudometric $\dpe$
from $X$.
\end{cor}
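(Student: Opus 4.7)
The first assertion, the hyperbolicity of $X_0$, is immediate from Lemma~\ref{lem-subtree}: the proof of Theorem~\ref{thm-tilfdpelhyp} establishes that the tree of spaces $\til\Pi: \til F \to \TT$ satisfies the qi-embedded condition (Lemma~\ref{lem-atomshyp}(3)) and the weak annuli flare condition, and both conditions are inherited by the restriction $\til\Pi: X_0 \to \TT_0$ to any subtree. Thus $X_0$ is $\dpe$-hyperbolic.

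For the $\dpe$-quasiconvexity of a drilled boundary component $\til S_x \subset \til M_v$ of $X_0$, the plan is to apply Corollary~\ref{cor-qc} via the augmentation of Remark~\ref{rmk-qc}. Since $\til S_x$ is $\pi_1$-injective and qi-embedded in $\til M_v$ by Lemma~\ref{lem-atomshyp}(3), we may add an auxiliary vertex $w$ and edge $e=[w,v]$ to the base graph underlying $X_0$, setting $Y_w = Y_e = S_x$. In the universal cover of the augmented graph of spaces, $\til S_x$ itself becomes a vertex space.

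It then remains to verify that $\til S_x$ \emph{flares in all directions weakly}, in the sense of Definition~\ref{def-flareinall}. A hallway of length $m$ in the augmented $X_0$ starting at $\til S_x$ immediately crosses the auxiliary edge into $\til M_v$ and, because hallways are non-backtracking, continues for $m-1$ further steps as an essential hallway of $X_0$ itself. These continuations fall into the three mutually exclusive types analysed in Section~\ref{sec-hypofdpe}: non-backtracking annuli whose core curve is not homotopic to a drilled curve, wrapping annuli without backtracking, and wrapping annuli with backtracking. The flaring analysis in that section shows that in each case the girth grows by a definite factor $\lambda > 1$ as one moves a fixed number of steps away from any starting fiber, provided the girth is at least some threshold $H$. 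In Case~1 the exponential growth comes from Proposition~\ref{prop-conversebf} applied in $E_S$; in Cases~2 and~3, the elementary wrapping sub-annulus $B_S$ contributes a $\dpe$-bounded segment (its meridian factor having $\dpe$-length zero), while the non-backtracking tails $A_S^{\pm}$ supply the exponential growth.

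The principal subtlety to dispatch is that the one-sided flare inequality required by Definition~\ref{def-flareinall} truly follows from the two-sided weak annuli flare condition proved in Section~\ref{sec-hypofdpe}. This is precisely where the hypothesis that $\til S_x$ is a \emph{boundary} component of $X_0$ is used: since $\til S_x$ is not shared with any other vertex space of $X_0$, the non-backtracking hallway cannot turn back across the auxiliary edge, so the starting fiber of the hallway lies at an endpoint rather than in the interior, and the two-sided flare condition reduces to the required one-sided growth $l(\HH(\{0\} \times I)) \leq (1/\lambda)\, l(\HH(\{m\} \times I))$. Applying Corollary~\ref{cor-qc} in conjunction with Remark~\ref{rmk-qc} then yields the $\dpe$-quasiconvexity of $\til S_x$ in $X_0$, completing the proof.
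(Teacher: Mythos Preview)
Your reduction to Lemma~\ref{lem-subtree} for hyperbolicity and the augmentation via Remark~\ref{rmk-qc} match the paper. The gap is in your final paragraph, where you claim that the two-sided weak annuli flare condition ``reduces to the required one-sided growth'' because $\til S_x$ is a boundary component. This inference is invalid. The two-sided condition asserts $l(\HH(\{0\}\times I)) \le (1/\lambda)\max\{l(\HH(\{-m\}\times I)),\, l(\HH(\{m\}\times I))\}$ for hallways $\HH:[-m,m]\times I\to X$; it says nothing about a one-sided hallway $[0,m]$ that cannot be extended past $0$. The fact that $v_0$ is a leaf means no essential two-sided hallway is centred there, so the two-sided hypothesis is simply vacuous at that point, not automatically one-sided. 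In Case~1 especially, Proposition~\ref{prop-conversebf} gives only two-sided flaring in $E_S$; a curve filling $S$ can have length that decreases for a while along the bundle direction before increasing, so one-sided growth from an arbitrary fibre genuinely fails without further input.

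The paper supplies exactly this missing input via Theorem~\ref{kld}. Because $\til S_x$ bounds a \emph{drilled} atom $\til M_v$, any essential annulus starting on $\til S_x$ must, in its first step, traverse $\til M_v$; its core curve therefore lies (up to homotopy) in $S_x\setminus\sigma_i$ for some drilled curve $\sigma_i$, i.e.\ in a proper essential subsurface. Theorem~\ref{kld} then says $\pi_1$ of this subsurface is quasiconvex in $\Gamma=\pi_1(E)$, and quasiconvexity of the starting subspace is precisely what yields flaring in all directions in the sense of Definition~\ref{def-flareinall}. This is where the ``drilled'' hypothesis on the boundary component is actually used---not to truncate a two-sided hallway, but to force the core curve into a proper subsurface so that Theorem~\ref{kld} applies.
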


\begin{proof}
Hyperbolicity of $X_0$ follows from Lemma~\ref{lem-subtree} after noting that we have checked the weak annuli flare condition for $F_S$. 
Next, we attach an auxiliary vertex space $\til S_x \times I$ to $X_0$ along $\til S_x$ and an auxiliary edge $e$ to $\TT_0$ to $v$ to get a tree of spaces
$X_0\bigcup_{\til S_x} \til S_x \times I \to \TT_0 \bigcup_v e$. Here,
$\til S_x \times \{1\} \subset \til S_x \times I $ is attached to $X_0$ along
$\til S_x\subset X_0$, so that $\til S_x\subset X_0$ becomes the edge-space $X_e$
corresponding to the new edge $e$. Further, let $v_0$ denote the extra vertex introduced
in $ \TT_0 \bigcup_v e$ (corresponding to $\{0\} \in I$). Thus, the new vertex
space $\til S_x \times I$ is $X_{v_0}$.
To prove $\dpe-$quasiconvexity of $\til S_x$, it suffices, by Corollary~\ref{cor-qc}, to show that essential annuli starting on  $X_{v_0}$ (in the sense of Definition~\ref{def-hallway}) flare in all directions in the sense of Definition~\ref{def-flareinall}. But such annuli are simply essential annuli
in the bundle $E \to \GG$ (before drilling). Further, their core curves lie in $S_x \setminus \sigma_i$ for one of the finitely many drilled curves $\sigma_i$.
The conclusion now follows from Theorem~\ref{kld} which guarantees that each of these finitely many proper essential subsurfaces of $S_x$ have uniformly quasiconvex elevations, and hence that any essential annulus starting on such a subsurface flares
in all directions.
\end{proof}

Another consequence of Corollary~\ref{cor-qc} that we record is the following.

\begin{cor}\label{cor-finiteqc}
Let $\TT_1 \subset \TT$ denote a subtree of finite diameter (but not necessarily locally-finite). Let $\til{\Pi}^{-1} (\TT_1) = X_1$, so that
$\til{\Pi}: X_1 \to \TT_1$ is a tree of spaces. Then $X_1$ is hyperbolic, and 
for any singular fiber $S_x$   with  $\til S_x\subset X_1$, $\til S_x$ is $\dpe-$quasiconvex in $X_1$. 
\end{cor}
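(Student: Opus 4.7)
The plan is to argue as in the proof of Corollary~\ref{cor-bdydrilledqc}, with the role played there by Theorem~\ref{kld} (controlling core curves of essential annuli for \emph{drilled} boundary components) now replaced by the finite-diameter hypothesis on $\TT_1$: this hypothesis forces any essential hallway in the tree of spaces over $\TT_1$ (or its one-vertex extension below) to have tree-length at most $\diam(\TT_1)+1$, so that the relevant flare conditions hold vacuously for all sufficiently large $m$. Hyperbolicity of $X_1$ is then immediate from Lemma~\ref{lem-subtree} applied to the subtree $\TT_1\subset\TT$, since $(\til F,\dpe)$ satisfies the qi-embedded condition and the weak annuli flare condition by the proof of Theorem~\ref{thm-tilfdpelhyp}.

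For $\dpe$-quasiconvexity of a singular fiber $\til S_x\subset X_1$, pick a vertex $v_x\in\TT_1$ whose vertex space $\til M_{v_x}$ contains $\til S_x$ (either as an edge space of $\til\Pi$ when both endpoints of the corresponding edge of $\TT$ lie in $\TT_1$, or as a boundary component of $X_1$ otherwise). By Lemma~\ref{lem-atomshyp}, $\til S_x$ is qi-embedded in $(\til M_{v_x},\dpe)$. Imitating the construction of Corollary~\ref{cor-bdydrilledqc} and Remark~\ref{rmk-qc}, I would attach a new leaf vertex $v_0$ to $v_x$ via a new edge $e$, with vertex space $X_{v_0}=\til S_x\times I$ and edge space $X_e=\til S_x$, obtaining an extended tree of spaces $X_1'$ over $\TT_1\cup e$.

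Every essential hallway in $X_1'$ projects to a non-backtracking (hence tree-geodesic) path in the base tree $\TT_1\cup e$, whose length is bounded above by $\diam(\TT_1)+1<\infty$. Choosing $m$ in the flare condition larger than this value makes the annuli flare condition for $X_1'$ vacuously satisfied; for the same reason, the flare-in-all-directions condition of Definition~\ref{def-flareinall} for the new vertex space $X_{v_0}$ holds vacuously. The qi-embedded condition for $X_1'$ is inherited from $X_1$ together with the obvious qi-embeddings $\til S_x\hookrightarrow\til S_x\times I$ and $\til S_x\hookrightarrow\til M_{v_x}$. Theorem~\ref{bestvina-feighn} then yields hyperbolicity of $X_1'$, and Corollary~\ref{cor-qc} in the form of Remark~\ref{rmk-qc} yields $\dpe$-quasiconvexity of $\til S_x$ in $X_1'$.

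Finally, quasiconvexity transfers from $X_1'$ to $X_1$ because $v_0$ is a leaf of $\TT_1\cup e$ attached only through $\til S_x$: any detour through $X_{v_0}=\til S_x\times I$ between two points of $X_1$ traverses the $I$-factor twice without shortening the $\til S_x$-component, so the restriction of $d_{X_1'}$ to $X_1\times X_1$ agrees with $d_{X_1}$, and $\dpe$-quasiconvexity of $\til S_x$ in $X_1'$ descends to $X_1$. The main subtlety will be to ensure that the flare constants $\lambda$ and $m$ inherited from Theorem~\ref{thm-tilfdpelhyp} can be simultaneously adjusted so that $m$ exceeds $\diam(\TT_1)+1$ while flare is preserved at the enlarged scale; this is routine, since flare at length $m_0$ with ratio $\lambda$ upgrades to flare at length $km_0$ with ratio $\lambda^k$.
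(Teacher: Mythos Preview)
Your proposal is correct and follows essentially the same approach as the paper: the finite diameter of $\TT_1$ bounds the length of any essential hallway, so the (weak) annuli flare condition and the flare-in-all-directions condition of Definition~\ref{def-flareinall} are vacuously satisfied once $m$ is chosen large enough, whence Theorem~\ref{bestvina-feighn} and Corollary~\ref{cor-qc} (via Remark~\ref{rmk-qc}) give hyperbolicity and quasiconvexity respectively. The paper's proof is just two sentences invoking exactly these facts; your version expands the auxiliary-vertex construction and the descent from $X_1'$ to $X_1$ explicitly, but these are the details the paper leaves implicit in its citation of Corollary~\ref{cor-qc}.
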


\begin{proof}
Since any essential hallway has length bounded by the diameter of $\TT_1$, hyperbolicity of $X_1$ follows from Theorem~\ref{bestvina-feighn}. The same reason guarantees $\dpe-$quasiconvexity of $\til S_x$ by Corollary~\ref{cor-qc}. 
\end{proof}

We also note the following.

\begin{lemma}\label{lem-fiberproper} Let $X_1, S_X, \til S_X$ be as above.
Then $\til S_x$ with its intrinsic metric is qi-embedded in $X_1$.
Also, $\til S_x$ properly embedded in $(\til F, \dpe)$.
\end{lemma}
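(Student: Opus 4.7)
The plan is to deduce both conclusions from the $\dpe$-quasiconvexity of $\til S_x$ in $X_1$ (Corollary~\ref{cor-finiteqc}), combined with the intrinsic qi-embedding of $\til S_x$ in each ambient atom (Lemma~\ref{lem-atomshyp}(3)) and a proper cocompact action of $\pi_1(S_x)$. The key enabling observation is that $\pi_1(S_x)\subset\pi_1(S)$ lies in the kernel of the graph-of-groups projection $\pi_1(F)\to\pi_1(\GGs)$, so $\pi_1(S_x)$ acts trivially on $\TT$ and preserves $\TT_1$ and $X_1$, acts on $(X_1,\dpe)$ by $\dpe$-isometries, is proper (inherited from the $\pi_1(F)$-action on $\til F$), and acts cocompactly on $\til S_x$ with quotient the compact surface $S_x$.

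For the qi-embedding in $X_1$, the upper bound $\dpe_{X_1}(p,q)\le L\,d^{intr}(p,q)$ for $p,q\in\til S_x$ is immediate using paths in $\til S_x$: the intrinsic hyperbolic metric on $S_x$ and the length metric induced on $\til S_x$ by the ambient $\dpe$ are both length metrics on the compact surface $S_x$, hence bi-Lipschitz; note that $\til S_x$ is disjoint from every electrified torus, since drilled curves lie only on regular fibers. For the lower bound, let $C$ be the $\dpe$-quasiconvexity constant and consider the closed $C$-neighborhood $\bar N_C(\til S_x)\subset X_1$ with its induced length metric $d_{\bar N_C}$. By quasiconvexity, a $\dpe_{X_1}$-geodesic between two points of $\til S_x$ stays in $\bar N_C(\til S_x)$, so $d_{\bar N_C}(p,q)=\dpe_{X_1}(p,q)$ whenever $p,q\in\til S_x$. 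Applying the Milnor--Schwarz lemma to the proper cocompact actions of $\pi_1(S_x)$ on $(\til S_x,d^{intr})$ and on $(\bar N_C(\til S_x),d_{\bar N_C})$ (the latter modulo the distance-zero equivalence of $\dpe$), both orbit maps are quasi-isometries onto $\pi_1(S_x)$, so the inclusion $(\til S_x,d^{intr})\hookrightarrow(\bar N_C(\til S_x),d_{\bar N_C})$ is itself a qi-embedding. Combined with the equality $d_{\bar N_C}=\dpe_{X_1}$ on $\til S_x$, this yields the desired lower bound.

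For proper embedding in $(\til F,\dpe)$, I would confine paths to a finite-diameter subtree. Let $e_x\in\TT$ be the edge corresponding to $S_x$. Since $\GG$ is finite there are only finitely many atom types in $F$, and each preferred hyperbolic or piecewise-Riemannian structure has positive transverse $\dpe$-width between its two boundary surfaces (electrifying meridians of drilled curves does not shrink this width, as meridians are transverse to fibers). Hence a uniform $\delta_1>0$ bounds this transverse width from below across all atoms of $\til F$. Consequently, any $\dpe_{\til F}$-path $\gamma$ of length at most $R$ with endpoints on $\til S_x$ projects under $\til{\Pi}$ to a loop at $e_x$ whose $\TT$-excursions reach depth at most $R/(2\delta_1)$. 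Taking $\TT_1$ to be the ball of radius $\lceil R/(2\delta_1)\rceil+1$ around $e_x$ in $\TT$ and $X_1=\til{\Pi}^{-1}(\TT_1)$, we have $\gamma\subset X_1$ and hence $\dpe_{X_1}(p,q)\le R$. The qi-embedding in this finite-diameter $X_1$ established above then gives $d^{intr}(p,q)\le K_1 R+K_1\ep_1<\infty$, which is proper embedding.

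The main obstacle is setting up the Milnor--Schwarz step carefully: one must verify that $(\bar N_C(\til S_x),d_{\bar N_C})$, quotiented by the distance-zero equivalence, is a proper geodesic metric space on which $\pi_1(S_x)$ acts properly and cocompactly. Cocompactness follows because $\pi_1(S_x)$ acts cocompactly on $\til S_x$ and every point of $\bar N_C$ is within $C$ of $\til S_x$; properness reduces to local compactness near $\til S_x$, which holds because $\til S_x$ does not meet any electrified torus and so $\dpe$ is a genuine metric there, with the local structure of a neighborhood of a surface inside a 3-dimensional graph of hyperbolic pieces.
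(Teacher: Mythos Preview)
Your claim that $\pi_1(S_x)$ ``acts trivially on $\TT$'' is false, and the justification you give (being in the kernel of $\pi_1(F)\to\pi_1(\GGs)$) does not imply this: the Bass--Serre tree is not the universal cover of $\GGs$, and an edge group fixes only its edge, not the whole tree. Concretely, if the adjacent vertex space is a drilled atom $\til M_r$ with $[\pi_1(M_r):\pi_1(S_x)]=\infty$, then $\pi_1(S_x)$ permutes the other edges at that vertex nontrivially. This does not sink your argument, since what you actually need---that $\pi_1(S_x)$ preserves $\TT_1$ and hence $X_1$---is still true provided $\TT_1$ is taken to be a metric ball around the edge stabilized by $\pi_1(S_x)$ (the action is by tree isometries fixing that edge). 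However, your Milnor--Schwarz step is not adequately justified: the passage ``properness reduces to local compactness near $\til S_x$'' does not establish that $(\bar N_C(\til S_x),d_{\bar N_C})/\!\sim$ is proper, because for $C$ large the $C$-neighborhood will meet electrified $\R^2$'s, where entire $\R$-directions are collapsed to points. You would need to argue that closed $\dpe$-balls in the quotient are compact, which is plausible but requires work you have not done.

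The paper's approach is different and avoids Milnor--Schwarz entirely. It reprises the proof of Lemma~\ref{lem-atomshyp}(3): having established $\dpe$-quasiconvexity of $\til S_x$ in $X_1$ from Corollary~\ref{cor-finiteqc}, one analyzes the $\dpe$-geodesic $\gamma^p_{uv}$ between points $u,v\in\til S_x$ directly. Away from the elements of $\PP$ contained in $X_1$, this geodesic tracks the intrinsic geodesic in $\til S_x$ by the general tracking lemmas (Lemma~\ref{lem-track}); on each $P\in\PP$ it meets, the nearest-point projection of $P$ onto $\til S_x$ is either uniformly bounded or coarsely an elevation of some drilled curve $\sigma_i$, so the portion of $\gamma^p_{uv}$ along $P$ is matched by a comparable segment in $\til S_x$. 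This gives the qi-embedding without invoking any group action on $X_1$. Your transverse-width argument for the second conclusion (confining paths of bounded $\dpe$-length to a finite-diameter subtree) is sound and matches the paper's ``since the finite diameter of $\TT_1$ was arbitrary''.
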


\begin{proof}
	Suppose that $S_x$ is a boundary surface of an atom ${M_e}$ (drilled or undrilled) of $F_S$. 
By Lemma~\ref{lem-atomshyp}, $\til S_x$ is qi-embedded in $\til{{M_e}}$ equipped
with $\dpe$ (the case where ${M_e}$ is undrilled is obvious). Let $\til{{M_e}}$ denote the vertex space $X_v$ for $v$ a vertex of $\TT$.
Next, let $\TT_1 \subset \TT$ denote a subtree of finite diameter (but not necessarily locally-finite) containing $v$. Then Corollary~\ref{cor-finiteqc} shows that 
$\til S_x$ is $\dpe-$quasiconvex in $\til{\Pi}^{-1} (\TT_1) = X_1$.

A reprise of the proof of  Lemma~\ref{lem-atomshyp} now shows that $\til S_x$, equipped with its intrinsic metric is qi-embedded in $X_1$.
Since the finite diameter of $\TT_1$ was arbitrary, the second conclusion follows.
\end{proof}

\begin{cor}\label{cor-nobt} There exist $K\geq 1, \ep >0$ such that the following hold.
	Let $\gamma$ be a geodesic in $(\til F, \dpe)$. Let $\til S_x$ be the elevation of
	a singular fiber bounding a drilled atom $\til M_v$ of $\til F$. Suppose further, that $u, v$ are entry and exit-points on $\til S_x$ respectively of $\gamma$ into and out of $\til M_v$. Let $\gamma|[u,v]$ denote the subpath of $\gamma$ between
	$u, v$ and $\beta(u,v)$ the geodesic in $\til S_x$ (with its intrinsic metric) between $u, v$. Let
	$$\gamma'(u,v) = (\gamma \setminus \gamma|[u,v]) \cup \beta(u,v)$$ be obtained from
	$\gamma$ by replacing $\gamma|[u,v]$ by  $\beta(u,v)$. Then $\gamma'(u,v)$ is a 
	$(K,\ep)-$quasigeodesic in  $(\til F, \dpe)$.
\end{cor}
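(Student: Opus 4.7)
The plan is to invoke both conclusions of Lemma~\ref{lem-atomshyp}: hyperbolicity of $(\til M_v, \dpe)$ and the qi-embedding of $\til S_x$ (with its intrinsic metric) into $(\til M_v, \dpe)$, with uniform constants $K_0, \ep_0$.

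First I would observe that $\gamma|[u,v]$ is in fact a $\dpe$-geodesic in $(\til M_v, \dpe)$: it is contained in $\til M_v$ by the hypothesis on entry/exit points, and any path in $\til M_v$ between $u$ and $v$ is also a path in $\til F$, so the $\dpe$-geodesic property of $\gamma|[u,v]$ in $\til F$ forces the same property in $\til M_v$. By Lemma~\ref{lem-atomshyp}(3), the intrinsic geodesic $\beta(u,v)$ on $\til S_x$ is a $(K_0, K_0\ep_0)$-quasigeodesic in $(\til M_v, \dpe)$. Since $(\til M_v, \dpe)$ is $\delta$-hyperbolic by Lemma~\ref{lem-atomshyp}(2), the Morse lemma produces a constant $C = C(K_0, \ep_0, \delta)$ such that $\gamma|[u,v]$ and $\beta(u,v)$ fellow-travel at Hausdorff distance at most $C$ in $(\til M_v, \dpe)$, and hence also in $(\til F, \dpe)$.

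Next I would verify the $(K,\ep)$-quasigeodesic property of $\gamma'(u,v)$ on an arbitrary pair $p, q \in \gamma'(u,v)$. If both $p, q$ lie on $\gamma|[0,a]$ or both on $\gamma|[b,L]$, then $l_{\gamma'}(p, q) = d_{\til F, \dpe}(p, q)$ since $\gamma$ is itself a geodesic. The key case is when $p, q$ both lie on $\beta(u,v)$: pick fellow-travelers $p', q' \in \gamma|[u,v]$ with $d_{\til M_v, \dpe}(p, p'), d_{\til M_v, \dpe}(q, q') \leq C$. Since $\gamma|[u,v]$ is a $\dpe$-geodesic in $\til F$, $d_{\til F, \dpe}(p', q')$ equals the $\gamma$-arclength between $p', q'$, and by the triangle inequality $d_{\til M_v, \dpe}(p, q) \leq 2C + d_{\til F, \dpe}(p, q)$. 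Applying Lemma~\ref{lem-atomshyp}(3) then gives
\[
l_\beta(p, q) \leq K_0 \cdot d_{\til M_v, \dpe}(p, q) + K_0 \ep_0 \leq K_0 \cdot d_{\til F, \dpe}(p, q) + (2 K_0 C + K_0 \ep_0),
\]
which is the required linear bound. The mixed cases (one point on $\beta(u,v)$ and the other on $\gamma \setminus \gamma|[u,v]$) reduce to this one by splitting at $u$ or $v$ and using the triangle inequality.

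The only obstacle is bookkeeping of constants across the cases: no new geometric input is required beyond Lemma~\ref{lem-atomshyp} and the stability of quasigeodesics in hyperbolic metric spaces. The resulting constants $K$ and $\ep$ depend only on $K_0, \ep_0$ from Lemma~\ref{lem-atomshyp} and the hyperbolicity constant $\delta$ of $(\til M_v, \dpe)$.
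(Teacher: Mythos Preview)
Your argument has a genuine gap at the very first step. You claim that $\gamma|[u,v]$ is contained in $\til M_v$, but this does not follow from the hypothesis. The points $u,v$ are consecutive crossings of $\til S_x$, so between them $\gamma$ lies on the $\til M_v$-side of $\til S_x$; however, $\til M_v$ has many other boundary surfaces $\til S_y$, and nothing prevents the $\dpe$-geodesic $\gamma$ from crossing some $\til S_y$ into a neighbouring atom and later returning. In other words, $\gamma|[u,v]$ is only guaranteed to lie in the half-space $X_1$ (the closure of the component of $\til F\setminus \til S_x$ containing $\til M_v$), not in the single atom $\til M_v$.

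This is precisely why the paper's proof invokes Corollary~\ref{cor-bdydrilledqc} rather than Lemma~\ref{lem-atomshyp}. Corollary~\ref{cor-bdydrilledqc} gives uniform $\dpe$-quasiconvexity of $\til S_x$ in all of $X_1$, which in turn rests on the flaring argument and Theorem~\ref{kld} (quasiconvexity of proper subsurface subgroups). Lemma~\ref{lem-atomshyp} only gives quasiconvexity of $\til S_x$ inside the single atom $\til M_v$, which is not enough once $\gamma|[u,v]$ is allowed to wander into deeper atoms. Once you have quasiconvexity in $X_1$, the rest of your Morse-lemma/fellow-travelling bookkeeping goes through essentially as you wrote it, with $X_1$ playing the role you assigned to $\til M_v$.
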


\begin{proof} Let $X_1$ denote the closure of the component of $X\setminus \til{S_x}$
	containing $\til M_v$.
By Corollary~\ref{cor-bdydrilledqc}, $\til{S_x}$ is quasiconvex in $(X_1, \dpe)$ with uniform constants (independent of $\til{S_x}$ and $X_1$). Hence, there exists $D$ such that $\gamma|[u,v]$ lies in a $D-$neighborhood of $\til{S_x}$ in  $(X_1, \dpe)$.
Hence, by Lemma~\ref{lem-fiberproper}, $\beta(u,v)$ is a $(K_1,\ep_1)-$quasigeodesic in
$(X_1, \dpe)$. 
The corollary follows.
\end{proof}

Corollary~\ref{cor-nobt} allows us to replace $\dpe-$geodesics by 
uniform $\dpe-$quasigeodesics that do not backtrack from drilled atoms in 
the following sense.

\begin{defn}\label{def-nobt}
 A $\dpe-$quasigeodesic $\gamma'$ in  $(\til F, \dpe)$ 
 is said to be a $\dpe-$quasigeodesic
 \emph{without backtracking from drilled atoms} if it satisfies the following. \\
 If $\gamma'$ enters a 
drilled atom $\til M_r$ through a boundary surface $\til S_1$, then it can
only leave  $\til M_r$ through  a boundary surface $\til S_2 \neq \til S_1$.
\end{defn}

Corollary~\ref{cor-nobt} now allows us to observe that for any $\dpe-$geodesic $\gamma$,
there exists a $(K,\ep)-$quasigeodesic $\gamma'$ without backtracking from drilled atoms
joining the end-points of $\gamma$. In fact, $\gamma'$ is obtained from $\gamma$ by
\begin{enumerate}
\item first carrying out replacements of all backtracking
segments in drilled atoms, 
\item and finally isotoping the resulting path slightly to make it disjoint from
$\til{S_x}$ as in Corollary~\ref{cor-nobt}.
\end{enumerate}

The paragraph titled `Checking the annuli flare condition' in the proof of Theorem~\ref{thm-tilfdpelhyp} gives the following further conclusion.

\begin{cor}\label{cor-maln1}
There exists $L \geq 1$ such that the following holds.\\ Let $[a,b] \subset \til{\GG}$ denote a geodesic of length at least $L$ such that $M_a, M_b$ are drilled atoms of $F_S$. Let $M_{[a,b]}$ denote the 3-manifold given by $\Pi_S^{-1}([a,b])$.
Also, let $S_a, S_b$ denote the boundary components of $M_{[a,b]}$. Let $\pi_1(S_a), \pi_1(S_b) $ denote the subgroups of $\pi_1(M_{[a,b]})$ carried
by $S_a, S_b$.
Then $\pi_1(S_a) \cap \pi_1(S_b) =\{1\}$.
\end{cor}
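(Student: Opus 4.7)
The plan is to derive the result from Lemma~\ref{lem-bddbreadth} by contradiction: set $L := D+1$, where $D$ is the constant from that lemma, and show that any non-trivial $g \in \pi_1(S_a) \cap \pi_1(S_b)$ would produce an essential annulus in $F_S$ intersecting both drilled atoms $M_a$ and $M_b$, which is exactly what Lemma~\ref{lem-bddbreadth} forbids once $d_{\til\GG}(a,b) > D$.

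First, I would construct an immersed annulus from the hypothetical non-trivial $g$. The element $g$ is represented by closed curves $\alpha_a \subset S_a$ and $\alpha_b \subset S_b$ whose conjugacy classes agree in $\pi_1(M_{[a,b]})$, so a free homotopy between them provides an immersed annulus $A:S^1\times I\to M_{[a,b]}$ with $A(S^1\times\{0\})=\alpha_a$ and $A(S^1\times\{1\})=\alpha_b$. Since the fibers $S_a, S_b$ are $\pi_1$-injective in $F_S$ by Lemma~\ref{lem-fiberproper}, $g$ remains non-trivial in $\pi_1(F_S)$, so $A$ is $\pi_1$-injective there as well.

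Next, I would verify that $A$ is essential and can be straightened into an essential hallway in the sense of Definition~\ref{def-hallway}. The key observation is that $\partial M_{[a,b]} = S_a \sqcup S_b \sqcup T$, where $T$ denotes the union of drilled boundary tori arising from the interior of $[a,b]$, so $S_a$ and $S_b$ lie in distinct connected components of $\partial M_{[a,b]}$. Since $\partial A$ meets both, $A$ cannot be homotoped rel.\ boundary into $\partial M_{[a,b]}$, giving essentiality in the 3-manifold sense. I would then pull $A$ tight so that its projection to $\til\GG$ coincides with the geodesic $[a,b]$ and each transverse slice is a geodesic in the corresponding edge space, producing an essential hallway in $F_S$.

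Finally, the projection of this hallway to $\til\GG$ traverses both endpoint vertices, so $A\cap M_a$ and $A\cap M_b$ are elementary annuli; hence $A$ intersects both drilled atoms per Definition~\ref{def-intersect}. Lemma~\ref{lem-bddbreadth} then yields $d_{\til\GG}(a,b)\leq D$, contradicting $d_{\til\GG}(a,b)\geq L$. The main obstacle I anticipate is the pulling-tight step: one must ensure the straightened hallway remains $\pi_1$-injective with endpoint slices still on $S_a$ and $S_b$, so I would invoke the hyperbolicity of atoms (Lemma~\ref{lem-atomshyp}) and the qi-embeddedness of fibers to carry out the straightening compatibly within each atom.
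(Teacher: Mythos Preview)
Your approach is correct and takes a genuinely different route from the paper. The paper argues via Corollary~\ref{cor-bdydrilledqc}: since $M_a,M_b$ are drilled, the lifts $\til S_a,\til S_b$ are uniformly $\dpe$-quasiconvex in $\til M_{[a,b]}$; a common conjugacy class would force a $\dpe$-geodesic to stay close to both, bounding $d_{\til\GG}(a,b)$. That route leans on the full Bestvina--Feighn machinery behind Theorem~\ref{thm-tilfdpelhyp}. Your route via Lemma~\ref{lem-bddbreadth} is more elementary: it only needs the curve-complex filling argument and nothing from Section~\ref{sec-hypofdpe}. What it buys is independence from the partial electrification setup; what the paper's route buys is a one-line deduction once Corollary~\ref{cor-bdydrilledqc} is in hand.

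On your anticipated obstacle: you do not actually need to straighten $A$ into an essential hallway in the Bestvina--Feighn sense to invoke the content of Lemma~\ref{lem-bddbreadth}. Put $A$ in general position with respect to $\Pi_S$ and remove inessential intersection circles. Since $\Pi_S(A)=[a,b]$, the annulus must cross the drilled fiber inside $M_a$, and any remaining intersection circle there is homotopic to the core $\alpha$ and lies in $S\setminus\sigma_a$; hence $i(\alpha,\sigma_a)=0$, and likewise $i(\alpha,\sigma_b)=0$. The filling argument in the proof of Lemma~\ref{lem-bddbreadth} then gives the contradiction directly, without ever needing monotonicity of the projection. Framing it this way closes the gap you flagged. (Minor point: $\pi_1$-injectivity of $S_a,S_b$ follows already from the graph-of-groups structure in Section~\ref{sec-prel-bdl}; citing Lemma~\ref{lem-fiberproper} is heavier than necessary.)
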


\begin{proof}
Since  $M_a, M_b$ are drilled atoms, $\til S_a, \til S_b$ are $\dpe-$quasiconvex
in $\til M_{[a,b]}$, equipped with the $\dpe-$metric by Corollary~\ref{cor-bdydrilledqc}. Further, the $\dpe-$quasiconvexity constant is 
uniform, independent of $a, b$. 

If $\pi_1(S_a) \cap \pi_1(S_b) \neq\{1\}$, then there exists a loop $\alpha_a \subset S_a$ freely homotopic to an 
$\alpha_b \subset S_b$. By uniform $\dpe-$quasiconvexity, any geodesic in the 
free homotopy class of $\alpha_a$ (resp.\ $\alpha_b$) must lie close to $S_a$
(resp.\ $S_b$). This forces the existence of an $L$ such that $d_{\til{\GG}} (a, b) <L$.
\end{proof}

\section{Guessing geodesics}\label{sec-gg}

\subsection{The guessing geodesics lemma}\label{sec-sisto} 
We shall need a necessary and sufficient condition for relative hyperbolicity due to Sisto \cite{sisto2012metric} building on earlier work of Bowditch \cite{bowditch-guessg}
and Hamenstadt \cite{hamen-guessg}.

\begin{defn}\label{def-cobdd} Let $(X,d_X)$ be a geodesic metric space, and $\PP$ 
	a collection of subsets.
The collection $\PP$ 
is said to be \emph{mutually bounded} if for each $K\geq 0$, there exists $B$ so that $\diam(N_K (P ) \cap
N_K (Q)) \leq B$, $\forall P\neq Q \in \PP$.
\end{defn}

In \cite{sisto2012metric}, Sisto refers to the mutual boundedness criterion above
as condition $(\alpha_1)$. We shall need the following:

\begin{theorem}\label{thm-sisto}\cite[Theorem 4.2]{sisto2012metric}\label{thm-guess}
Let $(X, d_X)$ and $ \PP$ be as above. Suppose that for all $x, y \in X$ we are given
\begin{enumerate}
\item a path $\eta (x, y)$
connecting them,
\item a closed subset $\theta (x, y) \subset \eta(x, y)$.
\end{enumerate} 
Suppose that there exists $D\geq 0$ such that 
 the following are satisfied:
 \begin{enumerate}
 \item if $d_X(x, y) \leq 2$ then $\diam(\theta(x, y)) \leq D$,
 \item Let $d_H$ denote Hausdorff distance. Then, $\forall x', y' \in \eta(x, y)$, we have
$ d_H(\theta(x', y'), \theta(x, y)\vert [x',y'] \cup \{x', y'\}) \leq D$,
 where $\theta(x, y)\vert [x',y'] = \theta (x, y) \cap \eta(x, y)\vert [x',y']$,
 \item $\forall x, y,z \in X$,  $\theta(x, y) \subset N_D(\theta(x, z) \cup \theta(z, y))$,
 \item if $x', y' \in \eta(x, y)$ do not both lie on the same
 $ P \in \PP$, then there
 exists $z \in \theta(x, y)$ between $x', y'$,
 \item The elements of $\PP$ are mutually bounded, i.e.\ for each $K\geq 0$ there exists $B\geq 1$ such that the diameter of $N_K (P_1 ) \cap
 N_K (P_2 ) \leq B$ for $P_1 \neq  P_2 \in \PP$.
 \item $\forall  k\geq 0$, there exists $K\geq 0$, such that if $d_X(x, P )\leq k$,
 and $d_X(y, P ) \leq  k$ for some $P \in \PP$; and further $d_X(x, y) \geq
 K$ then $\theta(x, y) \subset B_K (x) \cup B_K (y)$. Also, there exists 
 $z \in \theta(x, y)\cap
 N_D(P )$.
 \end{enumerate}
Then $X$ is strongly hyperbolic relative to $\PP$.
\end{theorem}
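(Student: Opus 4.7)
\medskip

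\noindent\textbf{Proof proposal for Theorem~\ref{thm-sisto}.}

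The plan is to deduce strong relative hyperbolicity from the data $(\eta,\theta,\PP)$ by passing to an electrified/coned-off space and then checking Bowditch's characterization. Concretely, form $\hat X$ by attaching a cone point $c_P$ to each $P\in\PP$ (equivalently, use a combinatorial horoball construction in the sense of Groves--Manning), and let $\hat d$ be the induced path metric. I would first replace each guessed path $\eta(x,y)$ by a path $\hat\eta(x,y)$ in $\hat X$ in which every maximal excursion into some $P\in\PP$ is shortcut through the associated cone point $c_P$, while leaving the closed subset $\theta(x,y)\subset X$ untouched and reinterpreted as a subset of $\hat\eta(x,y)$. Condition (4) is exactly the assertion that every bit of $\hat\eta(x,y)$ outside the peripherals is witnessed by $\theta$, so $\theta(x,y)$ should be thought of as the thick skeleton of $\hat\eta(x,y)$.

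The first main step is to show $(\hat X,\hat d)$ is hyperbolic. Here I would invoke the Bowditch/Hamenstadt guessing geodesics criterion (the non-relative version of the present statement) applied to the paths $\hat\eta$. The coarse slimness of triangles in $\hat X$ is extracted from condition (3) combined with (6): condition (3) furnishes slim triangles for the $\theta$-skeletons of triples, and (6) guarantees that any peripheral excursion along $\hat\eta(x,y)$ is coarsely detected at its entry/exit points by $\theta$, so that the cone-point shortcuts inherit thin triangle behavior. Conditions (1) and (2) then play the role of the usual local-to-global axioms, ensuring that the $\theta$-skeletons behave like quasi-geodesics under both short detours and restriction to subintervals. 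The output is a $\delta$-hyperbolic metric space $\hat X$.

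The second main step is to verify Farb's bounded coset penetration (or equivalently Druţu--Sapir's asymptotically tree-graded structure) for the pair $(X,\PP)$. Mutual boundedness of the collection (condition (5)) gives the ``finite penetration diameter for distinct peripherals'' half. The ``fellow-traveling'' half follows from condition (6) together with the Morse/stability of $\hat\eta$: two pairs $(x,y),(x',y')$ close in $X$ lead to $\hat\eta$'s that fellow-travel in $\hat X$ by hyperbolicity, and when one of them enters $P$, condition (6) forces the other to do so at a uniformly nearby point, since $\theta$ must touch $N_D(P)$ precisely at that entrance. Together with (5) this pins down entry and exit points up to bounded error.

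The main obstacle I anticipate is the interface between the ``peripheral'' and the ``thick'' portions of $\hat\eta$, namely verifying condition (6) in the coned-off picture without circularity: one must show that whenever a $\theta$-skeleton approaches a peripheral, it actually hits $N_D(P)$ and does so near both endpoints, so that the cone-point shortcut respects the slim-triangle bookkeeping. Once $\hat X$ is known to be hyperbolic and entry/exit points behave coarsely, the implication $\hat X \text{ hyperbolic} + \text{BCP} \Rightarrow (X,\PP)$ strongly relatively hyperbolic is standard (Farb \cite{farb-relhyp}, Bowditch \cite{bowditch-relhyp}), completing the proof.
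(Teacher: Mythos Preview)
The paper does not supply its own proof of Theorem~\ref{thm-sisto}; it is quoted as a black-box result from Sisto \cite{sisto2012metric}, with an attribution to the earlier non-relative guessing-geodesics criteria of Bowditch \cite{bowditch-guessg} and Hamenst\"adt \cite{hamen-guessg}. So there is nothing in this paper to compare your proposal against.

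That said, your outline is a reasonable caricature of how Sisto's argument actually runs: pass to the electrified space, use the $\theta$-skeletons together with conditions (1)--(4) to verify a non-relative guessing-geodesics criterion there, and then use (5)--(6) to recover bounded coset penetration. If you intend this as a genuine proof rather than a pointer to the literature, the part that needs real work is your ``first main step'': you assert that (1)--(3) and (6) feed into the Bowditch/Hamenst\"adt criterion in the coned-off space, but you have not said how the $\theta$-data on $X$ is promoted to a family of candidate geodesics on $\hat X$ satisfying the precise axioms of that criterion (in particular, how to handle triangles with a vertex at a cone point, and why the coarse subpath axiom (2) survives the shortcut operation). Sisto's paper spends considerable effort on exactly this translation, and it is not automatic. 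For the purposes of the present paper, the correct thing to do is simply cite \cite[Theorem~4.2]{sisto2012metric} and move on.
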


Theorem~\ref{thm-sisto} says roughly that if we can guess a family of paths 
in $X$ that satisfy the conditions required of geodesics in a relatively hyperbolic space, then $X$ itself is relatively hyperbolic.

\subsection{Path families}\label{sec-ea} Recall that $\PP$ denotes the collection of elevations of $\partial N_\ep(\sigma_i)$ in $\til F$, where 
$\sigma_i$ ranges over the finitely many drilled curves in $E$, 
 Recall that
$\til F$ admits three natural pseudo-metrics in our setup:
\begin{enumerate}
\item The path-metric $d$ lifted from $F$. Recall that the metric $d$ on $F$ is the natural path-metric induced from $E$.
\item The electric metric $d_e$ obtained from $d$ by electrifying the collection $\PP$
\cite{farb-relhyp}.
\item The partially electrified metric $\dpe$ constructed in Section~\ref{sec-pel}.
\end{enumerate}

We now recall a construction from \cite[Definition 1.13]{mahan-pal}. 
 Let $\hat \lambda$ denote an electric geodesic
in  $(\til F,\de)$ joining $a, b \in \til F$. Modify $\hat \lambda$ to a path $\lambda_{ea}$ as follows. First,
$\lambda_{ea}$ coincides with  $\hat \lambda$ away from the elevations of $\partial N_\ep(\sigma_i)$ to $\til F$.
Next, for any $\til{\partial N_\ep(\sigma_i)}$ that $\hat \lambda$ intersects, let
$x_i, y_i$ denote the entry and exit points. Join $x_i, y_i$ by a geodesic in
$\til{\partial N_\ep(\sigma_i)}$ equipped with its flat Euclidean metric. The resulting path $\lambda_{ea}$ will be called an
\emph{electro-ambient quasigeodesic} in $\til F$. 

We recall the following consequence of
 \cite[Lemma 1.21]{mahan-pal},
and Lemma 4.5 and Proposition 4.6 of \cite{farb-relhyp} for easy reference.

\begin{lemma}\label{lem-track}
Let $\til M_r$ denote a drilled atom of $\til F$. Let  $a, b \in \til M_r$.
Let $\PP_r$ denote the collection of elevations of  $\til{\partial N_\ep(\sigma_i)}$ to
$\til M_r$. Then  $\til M_r$ is strongly hyperbolic relative to the collection $\PP_r$.

 Let $d, \de,\dpe$ denote the metric, electric (pseudo-)metric, and the partially electrified  (pseudo-)metric respectively on $\til M_r$.
 Let $\lambda, \hat \lambda, \lambda_{ea}, \lambda_p$ denote respectively the geodesic,
the electric geodesic, the electro-ambient quasigeodesic, and the geodesic with respect to the  $\dpe$ on $\til M_r$ joining $a, b$. 
Then
$\lambda, \hat \lambda, \lambda_{ea}, \lambda_p$ track each other away from $\PP_r$.
\end{lemma}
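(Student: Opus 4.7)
\medskip

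\noindent\textbf{Proof proposal.}
The plan is to handle the two assertions in turn: first the strong relative hyperbolicity of $\til M_r$, and then the pairwise tracking of the four candidate paths, reducing everything to standard relatively hyperbolic machinery plus the partial electrification comparison of Mj--Pal.

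For the first assertion, I would observe that it has essentially already appeared inside the proof of Lemma~\ref{lem-atomshyp}. Indeed, by the discussion in Section~\ref{sec-drill3}, $M_r$ admits either a complete finite-volume hyperbolic structure or a geometrically finite hyperbolic structure with convex boundary, in both cases with the rank-two cusps corresponding exactly to the drilled tori $\partial N_\ep(\sigma_i)$. Applying the main theorem of Farb \cite{farb-relhyp} then yields strong relative hyperbolicity of $\til M_r$ relative to the collection $\PP_r$ of elevated peripheral tori.

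With strong relative hyperbolicity available, the pair $(\lambda,\hat\lambda)$ is dispatched by Farb's bounded coset penetration property (Lemma 4.5 and Proposition 4.6 of \cite{farb-relhyp}): the hyperbolic geodesic $\lambda$ and the electric geodesic $\hat\lambda$ join the same endpoints in a strongly relatively hyperbolic space, so they fellow-travel in the complement of a uniform neighborhood of $\PP_r$. The electro-ambient quasigeodesic $\lambda_{ea}$ is, by definition, obtained from $\hat\lambda$ by replacing each maximal subsegment lying inside a peripheral $P\in\PP_r$ with a genuine Euclidean geodesic in $P$, so it coincides with $\hat\lambda$ (and hence tracks $\lambda$) away from $\PP_r$ by construction.

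The only genuinely delicate point is the fourth path $\lambda_p$, because $\dpe$ is only a \emph{partial} electrification: it shrinks the meridional circles of each boundary torus to zero length but leaves the longitudinal direction Euclidean, so the ``horospheres'' for $\dpe$ have a different shape than for $\de$. The key observation, which I would make explicit, is that $\dpe$ and $d$ agree in the complement of any fixed neighborhood of $\PP_r$, and that collapsing each remaining Euclidean factor induces a quasi-isometry $(\til M_r,\dpe)\to(\til M_r,\de)$ that is an isometry outside $\PP_r$. The partial electrification comparison recorded in \cite[Lemma 1.21]{mahan-pal} then yields that $\lambda_p$ fellow-travels $\hat\lambda$ in the complement of a uniform neighborhood of $\PP_r$; combining with the previous paragraph gives mutual tracking of all four paths away from $\PP_r$.

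The main potential obstacle is conceptual rather than technical: one must be careful that the partially electrified geodesic $\lambda_p$ cannot exploit the zero-length meridians to take long shortcuts that deviate substantially from $\lambda$ outside $\PP_r$. This is precisely what \cite[Lemma 1.21]{mahan-pal} rules out, so no new input beyond the cited lemmas is required, and the argument amounts to assembling them in the order above.
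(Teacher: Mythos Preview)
Your proposal is correct and follows essentially the same route as the paper's own proof: strong relative hyperbolicity via \cite{farb-relhyp} from the preferred hyperbolic structure on $M_r$, tracking of $\lambda$ and $\hat\lambda$ from Lemma~4.5 and Proposition~4.6 of \cite{farb-relhyp}, agreement of $\lambda_{ea}$ with $\hat\lambda$ away from $\PP_r$ by construction, and tracking of $\lambda_p$ with $\hat\lambda$ via \cite[Lemma~1.21]{mahan-pal}. Your additional remarks about the quasi-isometry between $(\til M_r,\dpe)$ and $(\til M_r,\de)$ are helpful context but not strictly needed beyond what the cited lemmas already provide.
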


\begin{proof}
It follows from \cite{farb-relhyp} that $\til M_r$ is strongly hyperbolic relative to the collection $\PP_r$, since $M_r$ admits the structure of a complete hyperbolic manifold with convex boundary.

Lemma 4.5 and Proposition 4.6 of \cite{farb-relhyp} guarantee that $\lambda, \hat \lambda$ track each other away from $\PP_r$. The construction of $\lambda_{ea}$ guarantees that $\lambda_{ea}$ and $ \hat \lambda$ agree on the nose  away from $\PP_r$.  Finally,  \cite[Lemma 1.21]{mahan-pal} guarantees that $\lambda_p$ 
and $ \hat \lambda$ track each other away from $\PP_r$.
\end{proof}

\subsubsection{ Connectors in drilled atoms:}\label{sec-connatom} $ $\\
Recall that for any drilled atom $M_r$ of $F_S$, $\til M_r$ is strongly hyperbolic relative
to $\PP_r$  by Lemma~\ref{lem-atomshyp} (see Lemma~\ref{lem-atomshyp} for notation). We assume that $M_r$ has been equipped with a complete hyperbolic metric with convex boundary (as in the proof of the first conclusion of Lemma~\ref{lem-atomshyp}).
Let $\til S_1, \til S_2$ denote two boundary components of
$\til M_r$. We are interested in the nearest point projections  of $\til S_1, \til S_2$ on each other. The nearest point projection of $\til S_1$
(resp.\ $\til S_2$) onto
$\til S_2$
(resp.\ $\til S_1$) will be denoted as $\pi_{12}$ (resp.\  $\pi_{21}$).
Three possible cases arise:\\

\noindent{\bf Case 1: Product region connectors.}\\
There exists a maximal essential proper  subsurface $\Sigma$ of one of the surface boundary components of $M_r$ such that  $\Sigma\times [0,1]$, equipped with the standard product structure embeds in $M_r$, with $\Sigma\times \{0,1\} \subset \partial M_r$. We refer to $\Sigma\times [0,1]$ as a \emph{maximal product region} in $M_r$. Further,
there exists an elevation $\til{\Sigma}\times [0,1] \subset \til M_r$, such that 
$\til{\Sigma}\times \{0\} \subset \til S_1$, and $\til{\Sigma}\times \{1\} \subset \til S_2$. In this case,  $\pi_{12} (\til S_1)$ lies in a uniformly bounded neighborhood of $\til{\Sigma}\times \{1\} \subset \til S_2$, and 
 $\pi_{21} (\til S_2)$ lies in a uniformly bounded neighborhood of $\til{\Sigma}\times \{0\} \subset \til S_1$. 
 For any $z\in \Sigma$, we refer to an elevation of
 $\{z\} \times [0,1]$ to $\til M_r$ as a \emph{product region connector between 
 $\til S_1, \til S_2$}. Note that in this case,  $\til S_1, \til S_2$ are necessarily elevations of distinct boundary components of $M_r$.

 \begin{defn}\label{defn-productpair}
If  $\til S_1, \til S_2$ are connected by a product region connector,  we say that 
 $\til S_1, \til S_2$ are a \emph{product pair} of elevations.
 \end{defn}
 
We emphasize that for $\til S_1, \til S_2$ a product pair, the nearest-point projection of one onto the other lies in a uniformly bounded neighborhood of $\til{\Sigma}\times \{0\} $ for a  maximal essential proper  subsurface $\Sigma$.\\
 
\noindent {\bf Case 2: Annular connectors.}\\
Recall that $M_r$ is obtained from $S\times I$ after drilling finitely
many curves $\{\sigma_i\}$. For any $z \in \sigma_i \times \{0,1\}$, for some $i$, suppose that $z_I= z\times I \setminus \, {\rm{Int}} (N_\ep (\sigma_i))\subset  S\times I$ is contained in $M_r$, i.e.\ $z_I$ does not intersect any $N_\ep (\sigma_j), j \neq i$. Let $z_I^\pm$ denote the two components of $z_I$, and let $m_z$ denote the meridian of $N_\ep (\sigma_i)$
passing through $z_I^\pm \cap \partial N_\ep (\sigma_i)$. Then an \emph{annular connector} $\eta$ in $M_r$ starts and ends at $\{z\}  \times \{0,1\}$, 
traverses a connected component of $z_I$,  wraps around 
$m_z$ finitely many times and finally traverses a (possibly same) connected component of $z_I$. 
 If $\eta$  starts and ends at the same point $\{z\}  \times \{0\}$ (or $\{z\}  \times \{1\}$), then it wraps around 
$m_z$ (the meridian) $k$ times for some $k \in \Z$; else if it starts and ends at distinct points
in $\{z\}  \times \{0,1\}$, it wraps around 
$m_z$  (the meridian)  $(k+\half)-$ times for some $k \in \Z$. Any elevation of an annular connector in $M_r$ is called \emph{an annular connector in $\til M_r$.}
If the number of times an annular connector
wraps around 
$m_z$ does not equal $\pm \half$, we call it 
and its elevations  \emph{strict annular} connectors. Let $\til S_1, \til S_2$ denote the elevations of the boundary components of $M_r$ passing through the end-points of an annular connector $\til \eta$. We observe the following:

\begin{lemma}\label{lem-nppline}
Suppose that $\til \eta$ is a strict annular connector between $\til S_1, \til S_2$ joining $\til z_1 \in \til S_1$ to 
$\til z_2 \in \til S_2$. Let $\til\sigma, \til \sigma'$ denote the elevations of $\sigma_i$ through $\til z_1 $ and $\til z_2$ respectively. Then $\pi_{12}(\til S_1)$ (resp.\ $\pi_{21}(\til S_2)$) lies in a uniformly bounded neighborhood of
$\til \sigma'$ (resp.\ $\til\sigma$).
\end{lemma}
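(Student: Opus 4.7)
The plan is to exploit the geometrically finite hyperbolic structure on $M_r$, together with the relative hyperbolicity of $\til M_r$ with respect to the horoball collection $\PP_r$ established in Lemma~\ref{lem-atomshyp}, to convert the wrapping data of the annular connector $\til\eta$ into a shared peripheral structure for $\til S_1$ and $\til S_2$ at a single cusp, from which the projection statement will follow by a standard relatively quasi-convex projection argument.

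First, I would unpack the geometric content of the strict annular connector. The connector $\til\eta$ starts at $\til z_1 \in \til S_1$, traverses a specific lift $\til T$ of $\partial N_\ep(\sigma_i)$ while wrapping a nonzero and non-half-integer number of times around the meridian $m_z$, and then ends at $\til z_2 \in \til S_2$. Since the longitude of $\til T$ is an elevation of $\sigma_i$, and both $\pi_1(S\times\{0\})$ and $\pi_1(S\times\{1\})$ contain loops freely homotopic (in $M$) to $\sigma_i$, the cyclic parabolic subgroup $\langle \sigma_i\rangle$ stabilizing $\til T$ lies simultaneously in $\operatorname{Stab}(\til S_1)$ and $\operatorname{Stab}(\til S_2)$. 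Geometrically, both $\til S_1,\til S_2$ asymptote to the horoball $H$ bounded by $\til T$ along horocyclic cusp rays in the longitude direction of $\til T$; these rays are exactly $\til\sigma \subset \til S_1$ and $\til\sigma' \subset \til S_2$. The strictness condition, wrapping $\neq \pm\half$, is precisely what prevents $\til\eta$ from being a product connector disguised by a half-twist across $\til T$, which would instead have placed $\til S_1,\til S_2$ into the product-pair situation of Case 1.

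Next, I would invoke that $\til S_1,\til S_2$ are uniformly relatively quasi-convex in $(\til M_r, \PP_r)$, since the corresponding boundary components of $M_r$ are geometrically finite by Lemma~\ref{lem-geof}. The standard projection lemma for relatively quasi-convex subsets of a relatively hyperbolic space then applies: the nearest-point projection $\pi_{12}(\til S_1)$ is uniformly bounded except within a uniform neighborhood of each peripheral horoball whose stabilizer meets both $\operatorname{Stab}(\til S_1)$ and $\operatorname{Stab}(\til S_2)$ in an infinite parabolic, and on each such horoball it lies in a uniform neighborhood of the common parabolic axis. For $H$, this axis on $\til S_2$ is exactly $\til\sigma'$, giving $\pi_{12}(\til S_1) \subset N_C(\til\sigma')$ for some uniform $C$; the symmetric statement $\pi_{21}(\til S_2) \subset N_C(\til\sigma)$ follows by interchanging the roles of $\til S_1$ and $\til S_2$.

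The main delicate point I foresee is confirming that the cusp at $\til T$ is the sole (or at least dominant) contribution to the projection: should $\til S_1,\til S_2$ happen to share further peripheral parabolics, the naive statement would require enlarging $\til\sigma'$ into a finite union of horocyclic arcs. One expects this to be dispensed with either by arguing that $\til\eta$ geometrically singles out $\til T$ as the unique shared cusp between $\til S_1$ and $\til S_2$ (any additional shared cusp would produce its own annular connector, treated separately), or by using the finiteness of the drilled curves together with the uniform quasi-convexity of the $\til S_j$ to absorb the finitely many cusp-orbits of $\operatorname{Stab}(\til S_1)\cap \operatorname{Stab}(\til S_2)$ into the constant $C$.
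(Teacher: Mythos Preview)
Your geometric approach via relative hyperbolicity is sound in outline, but the paper takes a much more direct algebraic route that sidesteps exactly the ``delicate point'' you flag at the end. The paper simply computes the intersection of stabilizers: choosing the basepoint $z_1$ and writing the strict annular connector as a group element $g \in \pi_1(M_r)$, one has $\operatorname{Stab}(\til S_1) \cap \operatorname{Stab}(\til S_2) = \pi_1(S_l) \cap \pi_1(S_k)^g$, and the strictness hypothesis (wrapping number $\neq \pm\tfrac12$) means $g$ involves a nontrivial power of the meridian $m_z$. Since $m_z$ commutes with the longitude $\sigma_i$ but with nothing else in $\pi_1(S)$, this intersection is exactly the cyclic group $\langle\sigma_i\rangle$. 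From $\operatorname{Stab}(\til S_1)\cap\operatorname{Stab}(\til S_2)=\Z$ the projection statement follows immediately (as in the cobounded case, Lemma~\ref{lem-cobpair}, via the Susskind--Swarup type argument).

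The advantage of the paper's route is that it \emph{proves} the intersection is exactly $\Z$ in one line, rather than having to argue separately that there are no additional shared cusps and no loxodromic intersection. Your approach would work, but the two resolutions you propose for the uniqueness-of-shared-cusp issue are both more roundabout than just computing the intersection directly; in particular, your second suggestion (absorbing finitely many cusp-orbits into the constant) would not by itself give the conclusion that the projection lies near the \emph{single} line $\til\sigma'$ as stated. The cleanest fix to your argument is simply to insert the paper's algebraic observation that conjugation by a nontrivial meridian power reduces the stabilizer intersection to $\langle\sigma_i\rangle$, after which your projection-lemma machinery applies without the caveat.
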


\begin{proof}
Let $z_1$ (the image of $\til z_1$ under the covering projection) be the base-point for $\pi_1(M_r)$, and let $z_\sigma$ be an annular connector that 
wraps around 
$m_z$  only half a time. Identify $\pi_1(S_2)$ with loops based at $z_1$ preceded and succeeded by $z_\sigma$ with appropriate orientations.

Then the result follows from the fact that $\pi_1(S_l) \cap \pi_1(S_k)^g = \Z$ for
$g \in \pi_1(M_r)$ representing any strict annular connector starting and ending 
at $z_1$, and $1 \leq l, k \leq 2$.
\end{proof}

\begin{defn}\label{defn-annularpair}
	If  $\til S_1, \til S_2$ are connected by a strict annular connector, we say that 
	$\til S_1, \til S_2$ are an \emph{annular pair} of elevations.
\end{defn}
 
\noindent {\bf Case 3: Cobounded connectors.}
\begin{defn}\label{def-cobpair}
	If  $\til S_1, \til S_2$ is a pair of distinct elevations that are neither 
a	product pair, nor an annular pair, we say that 
$\til S_1, \til S_2$ are a \emph{cobounded pair} of elevations.
\end{defn}

The terminology is justified by the following.

\begin{lemma}\label{lem-cobpair}
If $\til S_1, \til S_2$ are a cobounded pair  of elevations, then $\pi_{12}(\til S_1)$ is uniformly bounded in diameter.
\end{lemma}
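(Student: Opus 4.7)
The plan is to prove the contrapositive: assuming $\mathrm{diam}(\pi_{12}(\til S_1))$ is arbitrarily large, I deduce that $(\til S_1, \til S_2)$ is either a product pair or an annular pair, contradicting Definition~\ref{def-cobpair}. Equip $M_r$ with its preferred hyperbolic structure (Definition~\ref{def-preferred}) and work inside $\til M_r \subset \H^3$. By Lemma~\ref{lem-geof}, the stabilizer $H_i$ of $\til S_i$ is a geometrically finite surface subgroup of $\pi_1(M_r)$, and $\til S_i$ is coarsely its convex core; let $\Lambda_i \subset \partial \H^3$ denote its limit set.

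First I would convert unbounded projection into an infinite common stabilizer. A standard thin-quadrilateral argument in $\H^3$ shows that $\pi_{12}(\til S_1)$ lies in a uniform neighborhood of the convex hull of $\Lambda_1 \cap \Lambda_2$ intersected with $\til S_2$, so unboundedness forces $|\Lambda_1 \cap \Lambda_2| \geq 2$. Applying Susskind--Swarup in its extension to geometrically finite Kleinian subgroups with convex boundary (or equivalently the relatively-quasiconvex generalisation inside the relatively hyperbolic space $\til M_r$ furnished by Lemma~\ref{lem-atomshyp}), we conclude that $K := H_1 \cap \mathrm{stab}(\til S_2)$ is infinite, with $\Lambda(K) = \Lambda_1 \cap \Lambda_2$.

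Next I would analyse $K$ using the classification of essential annuli in drilled atoms from Section~\ref{sec-drill3}. If $K$ contains a loxodromic element, its conjugacy class in $\pi_1(M_r)$ supplies an essential annulus $A \subset M_r$ with boundary loops on the projections of $\til S_1$ and $\til S_2$. By the Section~\ref{sec-drill3} classification, $A$ is either a product annulus $\sigma \times I$ with $\sigma$ disjoint from the subsurface filled by the drilled curves, in which case $\til S_1, \til S_2$ share a maximal product region and form a product pair (Definition~\ref{defn-productpair}); or $A$ is a wrapping annulus around some drilled curve $\eta$, and since $\til S_1 \neq \til S_2$ are distinct, the backtracking subcase is ruled out, leaving a non-backtracking wrapping annulus, which is precisely the data of an annular connector as in Definition~\ref{defn-annularpair}. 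If instead $K$ is purely parabolic, both $\til S_1, \til S_2$ approach a common horoball $\til T \in \PP_r$; by concatenating a segment from $\til S_1$ to $\til T$, a loop wrapping around the meridian of $\til T$ iterated enough times (using a nontrivial element of $K$) to make the wrapping number fall outside $\{\pm \half\}$, and a segment from $\til T$ to $\til S_2$, we obtain the required strict annular connector, yielding an annular pair.

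The main obstacle is the extraction in the first step: establishing that unbounded projection of relatively quasiconvex subsets really does produce an infinite common stabilizer. Although this conclusion is classical for closed hyperbolic $3$-manifolds, the $S \times I$ case forces us to handle convex boundary; the cleanest route is either to double $M_r$ along its surface boundary and apply Susskind--Swarup in the resulting complete hyperbolic $3$-manifold (the boundary surface subgroups being preserved and remaining geometrically finite under doubling), or to work directly in $(\til M_r, \dpe)$ and exploit the relative hyperbolicity from Lemma~\ref{lem-atomshyp}. Once this common infinite subgroup is in hand, the remainder of the argument is a bookkeeping identification of the resulting loxodromic or parabolic element with one of the explicit annulus types of Section~\ref{sec-drill3}.
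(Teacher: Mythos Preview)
Your approach is essentially the contrapositive of the paper's, and both hinge on the same two ingredients: the triviality of $H_1\cap H_2$ (where $H_i=\operatorname{stab}(\til S_i)$) and the Susskind--Swarup result relating limit-set intersections to subgroup intersections for geometrically finite groups. The paper's proof is two lines: it simply asserts $H_1\cap H_2=\{1\}$ (this being implicit in the case analysis of product and annular pairs immediately preceding the lemma) and cites \cite{swarsuss} for the passage from trivial intersection to bounded projection. You instead argue the contrapositive explicitly, deriving an infinite $K=H_1\cap H_2$ from unbounded projection and then classifying the resulting annulus. This is the same mathematics, with more of the bookkeeping written out.

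One small correction: your claim that ``since $\til S_1\neq\til S_2$ are distinct, the backtracking subcase is ruled out'' is not right. Two \emph{distinct} elevations $\til S_1,\til S_2$ can both cover the same boundary component $S\times\{0\}$, and a nontrivial element of $K$ then produces exactly a backtracking wrapping annulus (type~2 in Section~\ref{sec-drill3}). This does not damage your conclusion, because such an annulus still furnishes an annular connector in the sense of Definition~\ref{defn-annularpair}: for the backtracking case the wrapping number is a nonzero integer, hence automatically $\neq\pm\half$, so the connector is strict and $(\til S_1,\til S_2)$ is an annular pair. Your worry about the convex-boundary issue in applying Susskind--Swarup is reasonable but not serious: the $H_i$ are geometrically finite by Lemma~\ref{lem-geof}, and the cited result \cite{swarsuss} applies to geometrically finite subgroups of a Kleinian group without requiring the ambient manifold to be closed.
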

 
\begin{proof}
Let $H_i < \pi_1(M_r)$ denote the stabilizers of $\til S_i,$ 
$i=1,2$. The Lemma then follows from the fact that $H_1 \cap H_2 =\{1\}$ (see, for instance,   the proof of  \cite[Corollary 3]{swarsuss}).
\end{proof}

\subsubsection{Preferred extended connectors in drilled atoms}\label{sec-prefconnatom} $ $ \\
We now describe a preferred family of quasigeodesics in $\til M_r$ connecting
$x \in \til S_1$ to $y \in \til S_2$. 
Denote $\pi_{12}{\til S_1} $ by $Z_2$ and $\pi_{21}{\til S_2} $ by $Z_1$. Then there are the following preferred product regions in $\til M_r$:

\begin{enumerate}
\item If  $\til S_1, \til S_2$ are  a product pair, then there exists 
 a proper essential subsurface $\Sigma$ of $S$  and elevations $\til \Sigma_i$, $i=1,2$  of boundary components of the product region $\Sigma \times [0,1]$ such that 
$Z_1 $ (resp.\ $Z_2$) is coarsely  $\til \Sigma_1$ (resp.  $\til \Sigma_2$). Also
$Z_i$ are (coarsely) the boundary components of $\til\Sigma \times [0,1]$, where
the $[0,1]$ direction has uniformly bounded length. We normalize the length of
$z \times [0,1]$ to one for all $z \in \til\Sigma$.
\item If  $\til S_1, \til S_2$ is an  annular  pair, then  $Z_1, Z_2$ are coarsely elevations
of (curves parallel to) $\sigma_i$. Further,  $Z_1, Z_2$ are boundary components
of a flat strip $\til \sigma_i \times [0,k\, l(m_z)+a]$, where $k$ equals  the number of times the annular connector wraps around the meridian $m_z$, 
$l(m_z)$ denotes the length of the meridian $m_z$, and $a$ equals the sum of the lengths of $z_I^\pm$, assuming without loss of generality that they are equal. 
\item If  $\til S_1, \til S_2$ is a cobounded  pair, then  $Z_1, Z_2$ are coarsely points, i.e.\  $Z_1, Z_2$ have uniformly bounded diameter.
\end{enumerate}

In all three cases, $\til S_i$ is strongly hyperbolic relative to $Z_i$, $i=1,2$.
Further, there is a natural product  $Z \times [a_1,a_2]$ embedded in $\til M_r$
	with $Z \times \{a_i\}=Z_i$, $i=1,2$. The interval $[a_1,a_2]$ has the following properties:
\begin{enumerate}
\item If  $\til S_1, \til S_2$ are  a product pair, $[a_1,a_2]$ has length one,
\item If  $\til S_1, \til S_2$ is an  annular  pair, $[a_1,a_2]$ has length equal to $k\, l(m_z)+a$,
\item If  $\til S_1, \til S_2$ is a cobounded  pair,  $[a_1,a_2]$ has length equal
to the length of an electro-ambient geodesic in $\til M_r$ joining $\til z_1$
and $\til z_2$.
\end{enumerate}

Next, for any $x_i\in \til{S_i}$, $i=1,2$, let $y_i \in Z_i$ denote a nearest point projection (in the intrinsic metric on $\til{S_i}$) of $x_i$ onto $Z_i$.
Identifying $Z_i$, $i=1,2$ with $Z \times \{0\} \subset Z \times [0,1]$
and  $Z \times \{1\} \subset Z \times [0,1]$ respectively, we have the following preferred family of paths joining $x_1, x_2$ in $\til M_r$:

 Projecting both $y_1, y_2$ to the $Z-$factor, we get points that we call $y_1, y_2$ again to avoid cluttering notation. Let $[y_1, y_2]$ denote the geodesic in $Z$ joining $y_1, y_2$. Let $p$ be any point on  $[y_1, y_2]$. Then the preferred collection of paths joining $x_1, x_2 \in \til M_r$ are given by the concatenation of the following segments:
 \begin{enumerate}
 \item the geodesic $[x_1,y_1] \subset \til S_1$ joining $x_1, y_1$,
 \item the geodesic $[y_1,p \times \{0\}] \subset \big( Z  \times \{0\}\big) (=Z_1) \subset \til S_1$ joining $y_1, p \times \{0\}$ in $\big( Z  \times \{0\}\big)$,
 \item the vertical interval $p \times [0,1] $ traveling from $ p \times \{0\}$ to  $p \times \{1\}$,
 \item the geodesic $[p \times \{1\},y_2] \subset \big( Z  \times \{1\}\big) (=Z_2) \subset \til S_2$ joining $p \times \{1\},y_2$ in $\big( Z  \times \{1\}\big)$,
 \item the geodesic $[y_2,x_2] \subset \til S_2$ joining $ y_2, x_2$.
 \end{enumerate}
 
 Note that there is only one vertical interval $p \times [0,1] $ traveling from $ p \times \{0\}$ to  $p \times \{1\}$ in each member of the family given above. Let $\FF(M_r, x_1, x_2)$ denote the above family. Elements of $\FF(M_r, x_1, x_2)$ for $x_1 \in \til{S_1}, x_2 \in \til S_2$ will be referred to as \emph{preferred extended connectors}.
 The construction above shows the following:
 
 \begin{lemma}\label{lem-preftracksea}
 Each $\alpha\in \FF(M_r, x_1, x_2)$ tracks the $\dpe-$geodesic and the electro-ambient quasigeodesic between $x_1, x_2$ in the intrinsic metric on
 $\til M_r$.
 \end{lemma}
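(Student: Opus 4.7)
The plan is to show that $\alpha$ is itself a uniform quasigeodesic in the intrinsic metric on $\til M_r$ that shares the same coarse trajectory as the electro-ambient quasigeodesic $\lambda_{ea}$ between $x_1, x_2$, after which the conclusion for the $\dpe$-geodesic follows from Lemma~\ref{lem-track}. More precisely, by Lemma~\ref{lem-track} applied to $\til M_r$ (which is strongly hyperbolic relative to $\PP_r$ by Lemma~\ref{lem-atomshyp}), the $\dpe$-geodesic $\lambda_p$ and the electro-ambient quasigeodesic $\lambda_{ea}$ between $x_1,x_2$ track each other in the intrinsic metric away from $\PP_r$. Hence it suffices to show that $\alpha$ tracks $\lambda_{ea}$ in the intrinsic metric.

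First I would verify that $\alpha$ is a uniform quasigeodesic. The five pieces of $\alpha$ live respectively in $\til S_1$, in $Z_1 \subset \til S_1$, in the product $Z \times [a_1,a_2]$, in $Z_2 \subset \til S_2$, and in $\til S_2$; by Lemma~\ref{lem-atomshyp} each boundary surface is qi-embedded in $(\til M_r, \dpe)$ (and, being the boundary of a convex hyperbolic manifold, also in the intrinsic metric). At each of the four bend points $y_1$, $p \times \{0\}$, $p \times \{1\}$, $y_2$, the definition of $y_i$ as a nearest-point projection of $x_i$ onto the (uniformly quasiconvex) set $Z_i$ in $\til S_i$, together with uniform quasiconvexity of $Z_i$ and the product structure across $Z\times[a_1,a_2]$, forces the concatenation to satisfy a uniform local quasigeodesic condition. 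Hyperbolicity of $(\til M_r,\dpe)$ then upgrades $\alpha$ to a global $(K,\ep)$-quasigeodesic in $\dpe$, and the qi-embedding between intrinsic and $\dpe$-metrics on each atom (Lemma~\ref{lem-atomshyp}, item (3), together with the relative hyperbolicity structure) transfers this to the intrinsic metric.

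It remains to match $\alpha$ against $\lambda_{ea}$, which we carry out in the three cases. In the \emph{product pair} case, $Z_1,Z_2$ are coarsely $\til\Sigma \times \{0\}, \til\Sigma \times \{1\}$ with $[0,1]$-direction of bounded length, so any quasigeodesic from $x_1$ to $x_2$ in $(\til M_r,\dpe)$ must (by nearest-point-projection properties in the hyperbolic space $\til M_r$) pass within bounded distance of the sets $Z_i$; $\lambda_{ea}$ and $\alpha$ thus agree up to uniform Hausdorff error. In the \emph{cobounded pair} case, $Z_1, Z_2$ are each uniformly bounded, so by construction the vertical interval is (up to bounded error) an electro-ambient geodesic joining $\til z_1,\til z_2$, and projections from $x_i$ to $Z_i$ in $\til S_i$ pin down $\alpha$ to within bounded distance of $\lambda_{ea}$. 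The hardest case is the \emph{annular pair}: here $Z_1, Z_2$ are coarse elevations of $\sigma_i$ sitting on the flat torus $T=\partial N_\ep(\sigma_i)$, and the vertical interval of $\alpha$ wraps around the meridian $k$ times inside the intrinsic flat strip $\til\sigma_i \times [0,k\,l(m_z)+a]$. The main obstacle is to check that this wrapping piece genuinely reflects the intrinsic-metric behavior of $\lambda_{ea}$, which modifies the electric geodesic by inserting geodesics in elements of $\PP_r$; here the key fact is Lemma~\ref{lem-nppline}, which identifies $\pi_{12}(\til S_1)$ and $\pi_{21}(\til S_2)$ as bounded neighborhoods of the lines $\til\sigma', \til\sigma$, so the electric geodesic between $x_1, x_2$ is forced to enter and exit $T$ at points coarsely matching the end-points of the vertical piece of $\alpha$. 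Since the electro-ambient modification inside $T$ is, by construction of the partially electrified metric, homotopic (and coarsely equivalent in length) to the wrapping piece in $\alpha$, the two paths track each other with uniform constants.
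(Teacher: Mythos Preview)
Your argument is correct, but it is organized quite differently from the paper's. The paper sidesteps the three-way case analysis entirely: instead of splitting according to whether $\til S_1,\til S_2$ are a product, annular, or cobounded pair, it splits according to whether one is tracking \emph{along} an element $P\in\PP_r$ or \emph{away} from all of them. Away from $\PP_r$, tracking follows directly from Lemma~\ref{lem-atomshyp} and Lemma~\ref{lem-track}. Along $P$, the paper uses the single observation that the nearest-point projection of any $P\in\PP_r$ onto $\til S_1$ is either uniformly bounded or coarsely an elevation of some $\sigma_i$; in the latter case the concatenations $[x_1,y_1]\cup[y_1,p\times\{0\}]$ and $[p\times\{1\},y_2]\cup[y_2,x_2]$ each contain a maximal subpath parallel to $P$, which is exactly what the electro-ambient construction inserts. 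This buys brevity and avoids your intermediate step of first proving $\alpha$ is a uniform quasigeodesic. Your route, by contrast, makes the geometry of each connector type explicit and in effect re-derives the paper's projection dichotomy case by case.

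One point to tighten: you invoke ``the qi-embedding between intrinsic and $\dpe$-metrics on each atom (Lemma~\ref{lem-atomshyp}, item (3))'' to transfer the quasigeodesic property of $\alpha$ from $\dpe$ to the intrinsic metric. That lemma only asserts that $\til S_x$ is qi-embedded in $(\til M_e,\dpe)$, not that the two metrics on $\til M_r$ are globally comparable---indeed they are not, since meridians have zero $\dpe$-length but positive intrinsic length. Your subsequent case analysis (especially the annular case via Lemma~\ref{lem-nppline}) does supply the missing control along $\PP_r$, so the argument is ultimately sound, but that transitional sentence should be rephrased or dropped.
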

 
 \begin{proof}
  The fact that $\alpha$ tracks the electro-ambient quasigeodesic
 \emph{ along elements of $\PP_r$ } follows from the fact that the nearest-point projection of
 any $P \in \PP_r$ onto $\til S_1$ is either uniformly bounded or (coarsely) an elevation of $\sigma_i$ to $\til S_1$. More precisely, in the second case, there exists an elevation $\til \sigma_i\subset \til S_1$, such that the nearest-point projection of
  $P \in \PP_r$ onto $\til S_1$ lies in a uniformly bounded neighborhood of 
  $\til \sigma_i$, and hence the concatenations $[x_1,y_1]\cup [y_1,p \times \{0\}]$, and $[p \times \{1\},y_2]\cup [y_2,x_2] $, used to define $\alpha$ have a maximal subpath each
  parallel to $P$. These are referred to as the subpaths of $\alpha$ \emph{along
  $P$}
  
  Away from elements of $\PP_r$, this is a consequence of Lemma~\ref{lem-atomshyp} and Lemma~\ref{lem-track}.
 \end{proof}

\subsubsection{Preferred extended connectors in concatenated drilled atoms}\label{sec-prefconnextdd} $ $ \\
Lemma~\ref{lem-preftracksea} can be extended to a 3-manifold obtained by concatenating finitely many atoms. As before, let $\Pi:\til F\to \TT$ denote  the Bass-Serre tree of $\til F=\til F_S$, with vertex spaces $X_v$ given by elevated atoms.

Corollary~\ref{cor-maln1} can be strengthened slightly as follows:

\begin{cor}\label{cor-maln2}
	There exists $L \geq 1$ such that the following holds.\\ Let $[a,b] \subset \TT$ denote a geodesic of length at least $L$ such that $X_a, X_b$ are elevated drilled atoms of $F_S$. Let $X_{[a,b]}$ denote the 3-manifold given by $\Pi^{-1}([a,b])$.
	Also, let $\til S_a\subset X_a$, and $ \til S_b\subset X_b$ denote  boundary components of $X_{[a,b]}$ such that $\til S_a, \til S_b$ do not separate 
	$X_{[a,b]}$ (i.e.\ $X_{[a,b]}$ lies entirely on one side of $\til S_a$; similarly for $\til S_b$). Then
	\begin{enumerate}
	\item  $X_{[a,b]}$ is strongly hyperbolic relative to the collection  $\PP_{[a,b]}$ (the elements of $\PP$ contained in it)
	\item  there exists $D$ depending only on
	$d_\TT (a,b)$ such that $\pi_{ab}(\til S_a)$ has diameter bounded by $D$
	(here, $\pi_{ab}$ denotes the nearest-point projection of $\til S_a$ on $\til S_b$).
	\end{enumerate}
\end{cor}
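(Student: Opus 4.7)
The plan is to establish (1) first, then obtain (2) by induction on $n := d_\TT(a,b)$. For (1), I would start by noting that $(X_{[a,b]}, \dpe)$ is hyperbolic by Lemma~\ref{lem-subtree} applied to the finite subtree $[a,b] \subset \TT$, since the weak annuli flare condition was verified for $F_S$ in the proof of Theorem~\ref{thm-tilfdpelhyp}. Further electrifying $\PP_{[a,b]}$ inside $(X_{[a,b]}, \dpe)$ merely collapses each torus lift (whose $\dpe$-pseudometric already kills the meridian direction) to a line and then to a point, yielding a quasi-isometry. This already gives weak relative hyperbolicity of $X_{[a,b]}$ with respect to $\PP_{[a,b]}$.

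To upgrade to strong relative hyperbolicity, I would invoke Sisto's guessing geodesics criterion (Theorem~\ref{thm-guess}). The candidate paths $\eta(x,y)$ would be constructed atom-by-atom as concatenations of preferred extended connectors from Sections~\ref{sec-prefconnatom}--\ref{sec-prefconnextdd}, with the subsets $\theta(x,y) \subset \eta(x,y)$ taken to be the portions outside controlled neighborhoods of $\PP_{[a,b]}$. Conditions (1)--(4) of Theorem~\ref{thm-guess} would reduce to the atom-level tracking in Lemma~\ref{lem-preftracksea} combined with the global hyperbolicity of $(X_{[a,b]}, \dpe)$. Mutual boundedness of $\PP_{[a,b]}$ (condition (5)) is inherited from each drilled atom, where peripheral $\Z\oplus\Z$ subgroups are malnormal by Lemma~\ref{lem-atomshyp} (which equips each atom with a complete hyperbolic structure having rank-two cusps). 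Condition (6) similarly follows from the atom-level strong relative hyperbolicity of Lemma~\ref{lem-atomshyp}.

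For (2), the base case $n = L$ would reduce to the cobounded pair analysis of Lemma~\ref{lem-cobpair}: the non-separating hypothesis on $\til S_a, \til S_b$ combined with the argument used to prove Corollary~\ref{cor-maln1} precludes product and annular pairs between $\til S_a$ and $\til S_b$ once $L$ is chosen large enough, so $\pi_{ab}(\til S_a)$ is uniformly bounded. For the inductive step, Corollary~\ref{cor-nobt} replaces $\dpe$-geodesics from $\til S_a$ to $\til S_b$ by uniform $(K,\ep)$-quasigeodesics without backtracking from drilled atoms; these must cross every intermediate boundary surface since $\til S_a, \til S_b$ are non-separating. The projection of $\til S_a$ onto the first intermediate boundary surface $\til S_{a'}$ is controlled by the base case applied within $X_a$, and the remaining configuration has tree distance $n-1$, to which the inductive hypothesis applies.

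The hard part will be verifying bounded coset penetration in the upgrade step of (1), since one must control how $\dpe$-geodesics re-enter peripheral cosets across atom transitions. This is where the atom-level strong relative hyperbolicity of Lemma~\ref{lem-atomshyp} must be combined delicately with the tracking properties of preferred extended connectors (Lemma~\ref{lem-preftracksea}), exploiting that edge-space surfaces are disjoint from every $P \in \PP_{[a,b]}$.
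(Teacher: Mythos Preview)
Your approach is far more elaborate than what is needed, and the paper's proof is essentially two lines. For (1), the paper simply observes that the argument for the first conclusion of Lemma~\ref{lem-atomshyp} applies verbatim: the quotient of $X_{[a,b]}$ is an atoroidal Haken 3-manifold (a finite concatenation of atoms), so by Thurston's hyperbolization it carries a geometrically finite hyperbolic structure with convex surface boundary and rank-two cusps corresponding exactly to $\PP_{[a,b]}$; strong relative hyperbolicity is then immediate from Farb's theorem. No combination theorem, no Sisto criterion, no path family is required.

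Your route through Theorem~\ref{thm-guess} has a genuine gap and a circularity risk. The gap is that you concede the ``hard part'' (bounded coset penetration across atom transitions) without resolving it; this is precisely the content that the 3-manifold structure gives for free. The circularity is that the preferred extended connectors of Section~\ref{sec-prefconnextdd}, and in particular Lemma~\ref{lem-cobddconnextdd} which you implicitly need for their tracking properties, are stated as \emph{consequences} of Corollary~\ref{cor-maln2}. Moreover, the paper later uses Corollary~\ref{cor-maln2} itself to verify conditions (5) and (6) of Theorem~\ref{thm-guess} for the full space $\til F$, so replicating that verification in miniature here inverts the logical order.

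For (2), your induction is also unnecessary. The paper deduces (2) directly from Corollary~\ref{cor-maln1}: once the stabilizers of $\til S_a$ and $\til S_b$ intersect trivially and both are quasiconvex in the (strongly relatively hyperbolic, by part (1)) space $X_{[a,b]}$, bounded projection follows from the standard fact recorded in Lemma~\ref{lem-cobpair} (citing \cite{swarsuss}). Your proposed base case at $n=L$ invokes Lemma~\ref{lem-cobpair}, which is formulated for a single atom, and the reduction to that setting is not explained; but once (1) is in hand the cobounded-pair argument applies to $X_{[a,b]}$ globally, with no induction needed.
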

\begin{proof}
The first conclusion has the same proof as the first conclusion of Lemma~\ref{lem-atomshyp}. The second now follows Corollary~\ref{cor-maln1}.
\end{proof}

The construction of preferred extended connectors in $X_{[a,b]}$ can
now be carried out exactly 
as in Section~\ref{sec-prefconnatom}. We denote the family thus constructed as $\FF(
X_{[a,b]}, x_1, x_2)$ and refer to elements in 
$\FF(
X_{[a,b]}, x_1, x_2)$ as \emph{preferred extended connectors in $X_{[a,b]}$}. 
 Corollary~\ref{cor-maln2} gives us the following immediate consequence.
 \begin{lemma}\label{lem-cobddconnextdd}
 We continue with the setup of Corollary~\ref{cor-maln2}. Then, there exists $L \geq 1$ such that if $d_\TT (a,b) \geq L$, then preferred extended connectors
 $\alpha \in  \FF(
 X_{[a,b]}, x_1, x_2)$ are concatenations of three pieces, where the middle piece $\alpha_m$ is necessarily a cobounded connector, and the first and last ones are geodesics in $\til S_a, \til S_b$. 
 
 Thus, the end-points of $\alpha_m$  are  coarsely well-defined, i.e.\ there exists $D$ depends only on $d_\TT (a,b)$, and $z_1, z_2 $ in $\til S_a, \til S_b$ respectively
 such that the end-points of $\alpha_m$ lie on  $\til S_a, \til S_b$ at a distance
 of at most $D$ from $z_1, z_2 $.
 \end{lemma}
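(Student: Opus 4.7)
The plan is to show that, once $d_\TT(a,b)$ is large enough, the only possibility in the extended trichotomy of connector types (product / annular / cobounded) for the pair $(\til S_a, \til S_b)$ is the cobounded one, and then to read off the three-piece structure and the coarse well-definedness directly from the construction of Section~\ref{sec-prefconnatom} as adapted in Section~\ref{sec-prefconnextdd}.

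First, I would recall that the construction of $\FF(X_{[a,b]}, x_1, x_2)$ classifies the boundary pair $(\til S_a, \til S_b)$ exactly as in Section~\ref{sec-connatom} into a product pair, annular pair, or cobounded pair, and builds $\alpha$ as a five-piece concatenation (two intrinsic geodesics in $\til S_a, \til S_b$, two geodesics in the projection sets $Z_a \subset \til S_a$ and $Z_b \subset \til S_b$, and one middle ``vertical'' connector). The essential observation is that a product pair forces $\pi_1(\til S_a) \cap \pi_1(\til S_b)$ to contain the fundamental group of a proper essential subsurface, while an annular pair forces this intersection to contain a $\Z$ generated by an elevation of some drilled curve. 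Thus, in both of these cases the intersection of stabilizers is nontrivial.

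Next, I would invoke Corollary~\ref{cor-maln1}, strengthened to the tree-of-spaces setting by Corollary~\ref{cor-maln2}, to produce $L \geq 1$ such that $d_\TT(a,b) \geq L$ forces $\pi_1(\til S_a) \cap \pi_1(\til S_b) = \{1\}$. Combined with the previous paragraph, this rules out both product and annular pairs, so $(\til S_a, \til S_b)$ must be a cobounded pair. By Lemma~\ref{lem-cobpair} (applied in the extended setting of Corollary~\ref{cor-maln2}), the projection sets $Z_a, Z_b$ have diameter bounded by some $D = D(d_\TT(a,b))$. Therefore the two ``geodesic-in-$Z_i$'' pieces of the five-piece construction collapse coarsely to points, and $\alpha$ reduces to the three-piece concatenation claimed: a geodesic in $\til S_a$, the cobounded middle connector $\alpha_m$, and a geodesic in $\til S_b$.

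Finally, for coarse well-definedness of the endpoints of $\alpha_m$, it suffices to pick $z_1 \in Z_a \subset \til S_a$ and $z_2 \in Z_b \subset \til S_b$ arbitrarily; since $\diam(Z_a), \diam(Z_b) \leq D$ by Corollary~\ref{cor-maln2}, both endpoints of $\alpha_m$ lie within distance $D$ of $z_1, z_2$ respectively. The main (mild) obstacle I anticipate is verifying the trichotomy carefully in the concatenated setting---one needs to make sure that the intersection-of-stabilizers argument genuinely detects product and annular pairs across multiple atoms, which uses that product and annular pairs within any single drilled atom in the chain propagate to nontrivial intersection between the two extreme boundary surfaces $\til S_a, \til S_b$; this is exactly the strengthening packaged into Corollary~\ref{cor-maln2}, and no further work should be needed.
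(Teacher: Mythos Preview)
Your proposal is correct and follows essentially the same approach as the paper: the paper's entire proof is the single sentence ``Corollary~\ref{cor-maln2} gives us the following immediate consequence,'' and your argument is a faithful unpacking of that immediacy---using the bounded-diameter projection from Corollary~\ref{cor-maln2}(2) (equivalently, the trivial intersection from Corollary~\ref{cor-maln1}) to force the cobounded case in the trichotomy, so that $Z_a, Z_b$ are coarsely points and the five-piece construction collapses to three. No differences worth noting.
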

 
 Since  $X_{[a,b]}$ is strongly hyperbolic relative to the collection  $\PP_{[a,b]}$, as are all drilled atoms $\til M_c$ for $c \in [a,b]$, 
 the restriction of $\sigma \in  \FF(
 X_{[a,b]}, x_1, x_2)$ to  $\til M_c$ may be perturbed by a uniformly bounded amount, so that
 $(\sigma \cap \til M_c) \in \FF_c (M_c , x_1, x_2)$ for some $x_1, x_2 \in
 \partial  \til M_c$.

\begin{rmk}\label{rmk-cobddconstdependence}
We note that Lemma~\ref{lem-cobddconnextdd} implies that  $\alpha_m$ is a coarsely
well-defined electro-ambient quasigeodesic in $X_{[a,b]}$, and \emph{any preferred connector} between $\til S_a, \til S_b$ coarsely contains it. Note however that the parameter
$D$ determining coarseness depends on $d_\TT (a,b)$.
\end{rmk}

Let $[a,b] \subset \TT$ denote a geodesic of length at least $L$ such that $X_a, X_b$ are elevated drilled atoms of $F_S$. Let $X_{[a,b]}$ denote the 3-manifold given by $\Pi^{-1}([a,b])$. Assume further that the only drilled
atoms in $X_{[a,b]}$
are $X_a, X_b$. Let $\til S_a, \til S_b$ denote the boundary components of $X_{[a,b]} \setminus {\rm Int} (X_a \cup X_b)$ so that $\til S_a, \til S_b$ are 
also boundary components of $X_a, X_b$ respectively. We refer to $\til S_a, \til S_b$ as \emph{internal boundary components}    of $X_a, X_b$ respectively.
All other boundary components will be referred to as \emph{external boundary components}.

\begin{lemma}\label{lem-longundrilled}
Let $X_{[a,b]}$, $X_a, X_b$ be as above.
 Then there exists $L\geq 2$ such that if $d_\TT (a,b) \geq L$, the following holds. For any external boundary components $\til S_1, \til S_2$ of $X_{[a,b]}$,
there exists a coarsely well-defined geodesic $\lambda$ joining internal boundary components  $\til S_a, \til S_b$ in 
$X_{[a,b]} \setminus {\rm Int} (X_a \cup X_b)$ such that any $\alpha \in \FF$ connecting $\til S_1, \til S_2$  coarsely contains $\lambda$. Further, the coarseness is uniform,
independent of $X_{[a,b]}$.
\end{lemma}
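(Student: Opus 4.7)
The plan is to take $\lambda$ to be the restriction to the middle undrilled region $Y := X_{[a,b]} \setminus \mathrm{Int}(X_a \cup X_b)$ of the coarsely well-defined cobounded middle piece $\alpha_m$ of any preferred extended connector between $\til S_1$ and $\til S_2$ in $X_{[a,b]}$. To make this work, I need to upgrade Corollary~\ref{cor-maln2} -- which asserts cobounded-ness of the internal pair $(\til S_a, \til S_b)$ -- to cobounded-ness of the external pair $(\til S_1, \til S_2)$ when $d_\TT(a,b) \geq L$ for $L$ sufficiently large.

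The first step is to verify that $(\til S_1, \til S_2)$ is a cobounded pair in $(X_{[a,b]}, \dpe)$. Both $\til S_1$ and $\til S_2$ are $\dpe$-quasiconvex: they each bound a drilled atom (namely $X_a$ and $X_b$), so Corollary~\ref{cor-bdydrilledqc} applies. I would then rerun the Scott--Swarup style argument from Corollary~\ref{cor-maln1}: if $\pi_1(\til S_1) \cap \pi_1(\til S_2) \neq \{1\}$ inside $\pi_1(X_{[a,b]})$, a common loop would be $\dpe$-close to both $\til S_1$ and $\til S_2$ by uniform quasiconvexity, forcing a uniform upper bound on $d_\TT(a,b)$. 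Choosing $L$ beyond this bound yields trivial intersection and hence (as in Lemma~\ref{lem-cobpair}) uniformly bounded nearest-point projections.

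With this in place, the analogue of Lemma~\ref{lem-cobddconnextdd} applied to the pair $(\til S_1, \til S_2)$ gives that every $\alpha \in \FF(X_{[a,b]}, x_1, x_2)$ with $x_i \in \til S_i$ decomposes as an initial geodesic in $\til S_1$, a coarsely well-defined middle piece $\alpha_m$, and a terminal geodesic in $\til S_2$. By Lemma~\ref{lem-track} together with Remark~\ref{rmk-cobddconstdependence}, $\alpha_m$ is coarsely a $\dpe$-geodesic (equivalently an electro-ambient quasigeodesic) between coarsely fixed points $z_1 \in \til S_1$ and $z_2 \in \til S_2$ depending only on the pair $(\til S_1, \til S_2)$. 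Applying Corollary~\ref{cor-nobt} to eliminate backtracking at $\til S_a, \til S_b$, I may assume $\alpha_m$ enters $X_a$ at $z_1$, crosses $\til S_a$ once at a coarsely well-defined point -- namely the projection of $z_1$ onto $\til S_a$ within $X_a$, coarsely determined by the classification in Section~\ref{sec-connatom} as a product, annular, or cobounded pair -- then traverses $Y$, crosses $\til S_b$ symmetrically, and exits at $z_2$. Set $\lambda := \alpha_m \cap Y$.

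Finally, $\lambda$ is a $\dpe$-geodesic in $Y$ joining coarsely well-defined endpoints on $\til S_a, \til S_b$. Since $Y$ is a sub-tree-of-spaces of $\til F$ inheriting the weak annuli flare condition, Lemma~\ref{lem-subtree} gives hyperbolicity of $(Y, \dpe)$, so $\lambda$ is coarsely well-defined with constants depending only on $L$, the bundle, and the (finite) drilling data -- not on $d_\TT(a,b)$ beyond the threshold $L$ -- hence uniform in $X_{[a,b]}$. Any other preferred connector $\alpha' \in \FF$ between $\til S_1, \til S_2$ differs only in the choice of endpoints $x_i$ and the auxiliary projection point $p$, so its middle piece coarsely agrees with $\alpha_m$ and hence $\alpha'$ coarsely contains $\lambda$. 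The main obstacle I anticipate is the verification in the second paragraph that the pair $(\til S_1, \til S_2)$ is cobounded with constants independent of $X_{[a,b]}$; in particular the Swarup-type triviality of $\pi_1(\til S_1) \cap \pi_1(\til S_2)$ must be turned into a uniform projection bound, and some care is needed when $\til S_1$ or $\til S_2$ is a surface boundary distinct from $\til S_a, \til S_b$ inside the same drilled atom.
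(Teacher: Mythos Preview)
Your approach differs from the paper's, and the obstacle you flag in your final paragraph is genuine: it is exactly where the argument breaks down and exactly where the paper takes a different route.

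You want to show that $(\til S_1,\til S_2)$ is a cobounded pair in $(X_{[a,b]},\dpe)$ with constants independent of $d_\TT(a,b)$, and then feed this into Lemma~\ref{lem-cobddconnextdd}. But Remark~\ref{rmk-cobddconstdependence} explicitly warns that the coarseness constant there depends on $d_\TT(a,b)$, and the bound in Corollary~\ref{cor-maln2}(2) does as well. Your Scott--Swarup step gives trivial intersection of stabilizers for $d_\TT(a,b)\ge L$, but turning that into a \emph{uniform} diameter bound on the projection requires a cocompactness argument over the family $\{X_{[a,b]}\}$ of unbounded size, which is not available. Consequently your endpoints $z_1,z_2$ of $\alpha_m$ are well-defined only up to a constant growing with $d_\TT(a,b)$, and every subsequent step inherits this non-uniformity.

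The paper never asserts coboundedness of $(\til S_1,\til S_2)$. Instead it projects $\til S_1$ onto $\til S_a$ \emph{entirely within the single drilled atom $X_a$}, obtaining $W_1\subset\til S_a$; by the trichotomy of Section~\ref{sec-connatom} this $W_1$ is (coarsely) an elevation of a proper essential subsurface of $S$, with constants depending only on the finitely many atom types---in particular, independent of $d_\TT(a,b)$. Symmetrically $W_2\subset\til S_b$. The key structural point you do not exploit is that the middle region $Y=X_{[a,b]}\setminus\mathrm{Int}(X_a\cup X_b)$ is \emph{undrilled}, hence sits inside $\til E$, and Theorem~\ref{kld} gives that $W_1,W_2$ are quasiconvex in $Y$ with \emph{uniform} constants (finitely many subsurfaces, fixed hyperbolic group $\Gamma$). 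Two uniformly quasiconvex subsets of a hyperbolic space admit a coarsely well-defined shortest geodesic between them once sufficiently far apart; this is what the threshold $L$ provides. Any $\alpha\in\FF$ must pass coarsely through $W_1$ and $W_2$ (its restriction to $X_a$ is a preferred connector landing in the projection region), hence coarsely contains that geodesic. The uniformity comes from Theorem~\ref{kld} applied in the fixed ambient $\til E$, not from any estimate internal to $X_{[a,b]}$.
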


\begin{proof}
Let $\pi_{1a}, \pi_{2b}$ denote nearest point projections of $\til S_1$ onto 
$\til S_a, $ and $\til S_2$ onto $ \til S_b$ respectively. Then the images of
$\pi_{1a}, \pi_{2b}$, given by $W_1, W_2$ respectively, are given by an elevation each of a proper essential subsurfaces of $\til S_a, \til S_b$ respectively. Hence, by Theorem~\ref{kld}, 
 there exists $L\geq 2$ such that if $d_\TT (a,b) \geq L$, 
 there exists a coarsely well-defined shortest geodesic $\sigma$ in $X_{[a,b]} \setminus {\rm Int} (X_a \cup X_b)$ joining  $W_1, W_2$. Any $\alpha \in \FF$ connecting $\til S_1, \til S_2$  must  coarsely join points in
$W_1, W_2$ and hence must coarsely  contain $\sigma$.
\end{proof}

\subsubsection{Controlling backtracking in elevated subbundles}\label{sec-baccktrackinsubbundle}$ $ \\ Let $\til M_r$ be a drilled
atom of $\til F$. Let $\til S_1, \til S_2, \til S_3$ denote three distinct
boundary components of $\til M_r$, and let $\BB$ be the elevation of a maximal undrilled subbundle of $F_S$ (Definition~\ref{def-atom}) such that $\BB \cap \til M_r=\til S_3$. 
Let  $\pi_{13}, \pi_{23}$ be nearest-point projections of $\til S_1, \til S_2$
onto $\til S_3$. Then the images of $\pi_{13}, \pi_{23}$ are given 
\begin{enumerate}
\item either by an elevation 
of a proper essential subsurface of $S_3$ (as in Section~\ref{sec-connatom}),
\item or is uniformly bounded in diameter.
\end{enumerate}
Further, by Theorem~\ref{kld}, $\pi_{13}(\til S_1), \pi_{23}(\til S_2)$ are uniformly quasiconvex in $\BB$, i.e.\ there exists $C \geq 1$ such that 
for any $\BB, \til S_1, \til S_2, \til{S_3}$ as above, $\pi_{13}(\til S_1), \pi_{23}(\til S_2)$ are $C-$quasiconvex in $\BB$. Hence, there exists $D_0$ such that if $d(\pi_{13}(\til S_1),\pi_{23}(\til S_2)) \geq D_0$, there is a coarsely
unique shortest path $\alpha$ 
 in $\BB$ joining $\pi_{13}(\til S_1), \pi_{23}(\til S_2)$. Thus, due to quasiconvexity of $\pi_{13}(\til S_1), \pi_{23}(\til S_2)$, there exists $C_1 \geq 0$ such that
\begin{enumerate}
\item either there is a coarsely well-defined geodesic $\alpha$ joining 
$\pi_{13}(\til S_1), \pi_{23}(\til S_2)$ in $\BB$. In this case, any $\dpe-$geodesic between $\til S_1, \til S_2$ in $\til M_r \cup \BB$
$C_1-$coarsely contains $\alpha$, i.e.\ 
any $\dpe-$geodesic between $\til S_1, \til S_2$ in $\til M_r \cup \BB$ contains a subpath tracking  $\alpha$ within distance $C_1$ of it. In such a case the 
$\dpe-$geodesic is said to \emph{have an allowable backtrack in $\BB$}.
 
\item or $\pi_{13}(\til S_1) \cup \pi_{23}(\til S_2)$ is $C_1-$quasiconvex in
$\BB$. In this case, a 
$\dpe-$geodesic $\lambda_p$ between $\til S_1, \til S_2$ is said to \emph{have  non-allowable backtracks in $\BB$} if $\lambda_p \cap \BB \neq \emptyset$.
By perturbing any such $\dpe-$geodesic between $\til S_1, \til S_2$ by a bounded amount, we obtain a $\dpe-$quasigeodesic that does not intersect
$\BB$ at all.
\end{enumerate}

\begin{rmk}\label{rmk-cobddconstdependencesubb}
Note that the constant $C_1$ above may be chosen to be uniform, i.e.\ independent
of the choice of $\BB, \til M_r, \til S_1, \til S_2, \til S_3$ (as there are only finitely many such possibilities up to the action of $\pi_1(F)$).
\end{rmk}

\subsubsection{Controlling small connectors in elevated subbundles}\label{sec-smallbaccktrackinsubbundle}$ $ \\
We generalize the above discussion in Section~\ref{sec-baccktrackinsubbundle} to the case where there are two elevations $\til S_3, \til S_3'$ coming from \emph{different} elevated drilled atoms abutting a common elevated maximal subbundle $\BB$.

Let $\til M_r^1 \neq \til M_r^2$ be  drilled
atoms of $\til F$. Let $\BB$ be  the elevation of a maximal undrilled subbundle of $F_S$ (Definition~\ref{def-atom}) such that $\BB \cap \til M_r^i=\til S_i$,
$i=1,2$. 
Let $\til S_i' \subset \til M_r^i$, $i=1,2$ denote 
boundary components of $\til M_r$ (different from $\til S_i$).
Let  $\pi_{11}, \pi_{22}$ be nearest-point projections of $\til S_1', \til S_2'$
onto $\til S_1, \til S_2$ respectively. Then the images of $\pi_{ii}$,
$i=1,2$ is given 
\begin{enumerate}
	\item either by an elevation 
	of a proper essential subsurface of $S_i$ (as in Section~\ref{sec-connatom}),
	\item or is uniformly bounded in diameter.
\end{enumerate}
 By Theorem~\ref{kld}, $\pi_{11}(\til S_1'), \pi_{22}(\til S_2')$ are uniformly quasiconvex in $\BB$. As in Section~\ref{sec-baccktrackinsubbundle} above, there exists $C_1 \geq 0, D_1\geq 0$ such that
 \begin{enumerate}
 	\item either there is a coarsely well-defined geodesic $\alpha$ joining 
 	$\pi_{11}(\til S_1'), \pi_{22}(\til S_2')$ in $\BB$. In this case, any $\dpe-$geodesic between $\til S_1', \til S_2'$ in $\til M_r \cup \BB$
 	$C_1-$coarsely contains $\alpha$.

 	\item or $\pi_{11}(\til S_1') \cup \pi_{22}(\til S_2')$ is $C_1-$quasiconvex in
 	$\BB$. Further, the distance between $\pi_{11}(\til S_1'), \pi_{22}(\til S_2')$ in $\BB$ is at most $D_1$. 	
 \end{enumerate}
 
 We shall refer to paths $\alpha$ as in item (1) above as \emph{long connectors in undrilled elevations}. Further, the triple  $\til M_r^1, \BB, \til M_r^2$ will be called a \emph{long connector triple}.
 
 If, on the other hand $\pi_{11}(\til S_1') \cup \pi_{22}(\til S_2')$ is $C_1-$quasiconvex in
 $\BB$ as in item (2) above, then the triple  $\til M_r^1, \BB, \til M_r^2$ will be called a \emph{short connector triple}.

\subsubsection{Defining the path family}\label{sec-defpf}$ $ \\
We now define the family $\FF$ of paths that will feed into Theorem~\ref{thm-sisto}.
Recall that $(\til{F},\dpe)$ is hyperbolic by Theorem~\ref{thm-tilfdpelhyp}.
For $x, y \in \til F$, define $\eta_p(x,y)$ to be a geodesic in $(\til{F},\dpe)$.
Now, replace 
$\eta_p(x,y)$ by a $\dpe-$quasigeodesic that 
\begin{enumerate}
\item does not backtrack from drilled atoms
(in the sense of Definition~\ref{def-nobt})
as in Corollary~\ref{cor-nobt},
\item does not have non-allowable backtracks in elevations of maximal undrilled
subbundles.
\end{enumerate}
 Abusing notation slightly, we continue to denote the
non-backtracking $\dpe-$quasigeodesic by $\eta_p$.
For each drilled atom $\til M_r$, and each connected component $\zeta_r$ of
$\eta_p\cap \til M_r$, 
replace  $\zeta_r$  by an electro-ambient geodesic joining its end-points. Denote the resulting path in $\til F$ joining the end-points of $\eta_p$ by $\eta$.

 We refer
to $\eta$ as the electro-ambient path obtained from $\eta_p$ by \emph{de-electrification}.
Next, define $\FF_p$ to be a collection $\{\eta_p(x,y)\}$ of uniform
 quasigeodesics in $(\til{F},\dpe)$ without backtracking in drilled atoms, one for every pair $x, y \in \til F$.
Define $\FF$ to be the collection $\{\eta(x,y)\}$ obtained from the collection 
$\{\eta_p(x,y)\}$ by de-electrification. For each such $\eta(x,y)$ define $\theta(x,y)$
to be the closure of $\eta(x,y) \setminus \bigcup_{P \in \PP} P$. Thus, 
$\theta(x,y)$ is obtained from $\eta(x,y)$ by removing the interiors of the intersections with elements of $\PP$.

In short, $\FF$ is obtained from geodesics in $(\til{F},\dpe)$ by
\begin{enumerate}
\item  removing backtracking in drilled atoms, and 
  removing non-allowable backtracks in elevations of maximal undrilled subbundles
\item subsequent de-electrification.
\end{enumerate}

\begin{rmk}\label{rmk-nobt}
The purpose of replacing a $\dpe-$geodesic by a  $\dpe-$quasigeodesic that does not backtrack from drilled atoms is to minimize intersections with elevated singular fibers
$\til S_x$ in $\til F$. This ensures that the only allowable backtracking is in elevations of maximal undrilled subbundles (see Definition~\ref{def-atom}).
\end{rmk}

\subsection{Stability}\label{sec-stab} The aim of this subsection is to prove the following stability condition, which is the main technical result of this section:

\begin{prop}\label{prop-stab} Given $D > 0$, there exists $C > 0$ such that the following holds.\\
	Let $\eta (x,y), \eta (u,v) \in \FF$ such $d(x, u) \leq D$ and $d(y, v) \leq D$.
Then  $\eta (x,y), \eta (u,v)$ track each other in a $C-$neighborhood of each other.
\end{prop}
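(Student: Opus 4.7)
The plan is to argue in two stages: first obtain tracking in the partially electrified pseudo-metric $\dpe$, then upgrade to tracking in the intrinsic metric $d$ by controlling what happens inside each drilled atom.

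For the first stage, I note that by Corollary~\ref{cor-nobt}, the no-backtracking modification of a $\dpe$-geodesic yields a uniform $\dpe$-quasigeodesic, and the removal of non-allowable backtracks in elevated maximal undrilled subbundles (Section~\ref{sec-baccktrackinsubbundle}) costs only a uniformly bounded amount. Hence each $\eta_p(x,y) \in \FF_p$ is a $(K,\ep)$-quasigeodesic in $(\til F,\dpe)$ with constants independent of $x,y$. Since $d(x,u), d(y,v) \leq D$ implies $\dpe(x,u), \dpe(y,v) \leq D$, Theorem~\ref{thm-tilfdpelhyp} combined with the standard stability of quasigeodesics in a hyperbolic space yields a constant $C_0 = C_0(D)$ such that $\eta_p(x,y)$ and $\eta_p(u,v)$ track each other within $\dpe$-distance $C_0$.

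For the second stage, I pass to the de-electrified paths $\eta(x,y), \eta(u,v) \in \FF$. Outside every drilled atom, $\eta$ agrees with $\eta_p$ and the metrics $d$ and $\dpe$ coincide, so $d$-tracking on this region follows immediately. Within a drilled atom $\til M_r$, consider a maximal sub-arc $\zeta_r(x,y) \subset \eta(x,y) \cap \til M_r$. By construction this is an electro-ambient quasigeodesic in $\til M_r$, and its endpoints lie on singular fiber boundary components of $\til M_r$ (the peripheral tori $\partial\til N_\ep(\sigma_i)$ are boundary of $\til F$ itself, so a non-backtracking $\dpe$-geodesic enters and exits $\til M_r$ only through singular fibers). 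The corresponding endpoints of $\zeta_r(u,v)$ are $\dpe$-close to those of $\zeta_r(x,y)$ by Stage 1. Invoking Lemma~\ref{lem-fiberproper} (singular fibers are properly embedded in $(\til F,\dpe)$ and qi-embedded in any finite-diameter subtree region), $\dpe$-closeness on a common singular fiber upgrades to $d$-closeness with a constant depending only on $C_0$.

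With $d$-close endpoints on $\partial\til M_r$, I apply Lemma~\ref{lem-track}: in the relatively hyperbolic space $(\til M_r, d)$ with peripheral collection $\PP_r$, the electro-ambient quasigeodesics $\zeta_r(x,y), \zeta_r(u,v)$ track each other in the intrinsic metric away from elements of $\PP_r$, while along any peripheral $P \in \PP_r$ they are replaced by Euclidean geodesics on $P$ joining coarsely close entry/exit points, and hence track each other along $P$ as well. Assembling these local estimates across all drilled atoms encountered produces the desired constant $C$. The main obstacle is the upgrade from $\dpe$-closeness to $d$-closeness of entry/exit points on boundary surfaces: if the two paths enter a common drilled atom through \emph{distinct} singular fibers, one must use the Bass-Serre tree projection together with hyperbolicity of $(\til F,\dpe)$ and Lemma~\ref{lem-bddbreadth} to show that the discrepancy is absorbed into the error constant, so that Lemma~\ref{lem-fiberproper} still applies after a uniformly bounded perturbation.
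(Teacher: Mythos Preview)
Your two-stage strategy contains a genuine gap in Stage~2. The assertion that ``outside every drilled atom, $\eta$ agrees with $\eta_p$ and the metrics $d$ and $\dpe$ coincide, so $d$-tracking on this region follows immediately'' is false as stated. It is true that $\eta=\eta_p$ there, but the \emph{ambient} metrics $d$ and $\dpe$ on $\til F$ do not agree on such points: a $\dpe$-geodesic between two points lying outside all drilled atoms may well pass through a drilled atom and exploit an electrified torus, so $\dpe(p,q)$ can be arbitrarily small while $d(p,q)$ is arbitrarily large. Thus Stage~1 gives you nothing about $d$-distances anywhere. The same problem undermines your treatment inside a drilled atom $\til M_r$: you need the entry and exit points of $\zeta_r(x,y)$ and $\zeta_r(u,v)$ on a common singular fiber to be $d$-close before you can invoke Lemma~\ref{lem-track}, but $\dpe$-tracking of the paths does not force their crossing points with $\til S_a$ to be $\dpe$-close (let alone $d$-close), and you have not argued that $\eta(u,v)$ even visits the same drilled atoms as $\eta(x,y)$ in the presence of allowable backtracks in undrilled subbundles.

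The paper's proof does not attempt any such upgrade. Instead it works directly in $d$ by producing, along the itinerary of atoms and undrilled molecules traversed by $\eta(x,y)$, a sequence of coarsely \emph{canonical} checkpoints that any path in $\FF$ with nearby endpoints must also pass through. These come from three sources: the coarsely well-defined connector $\alpha_j$ inside each undrilled molecule (Sections~\ref{sec-baccktrackinsubbundle}--\ref{sec-smallbaccktrackinsubbundle}), the cobounded connector of Lemma~\ref{lem-cobddconnextdd} across any stretch containing $L$ drilled atoms, and the canonical geodesic of Lemma~\ref{lem-longundrilled} across any long undrilled stretch. Between consecutive checkpoints one is reduced to a concatenation of boundedly many atoms, where relative hyperbolicity (Lemma~\ref{lem-track}) finishes the job. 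Your final paragraph gestures toward this difficulty, but it is not an afterthought to be ``absorbed into the error constant''; it is the entire substance of the argument.
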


We first observe a version of Proposition~\ref{prop-stab} in atoms.
\begin{lemma} Given $D > 0$, there exists $C > 0$ such that the following holds.\\
	Let $\eta (x,y), \eta (u,v) \in \FF$ be  such that
	\begin{enumerate}
	\item There exists an atom $\til{\mathbb{M}}$ equal to $\til M_r$ (drilled) or $\til M$ (undrilled) such that
	$\eta (x,y), \eta (u,v)$ are contained in $\til{\mathbb{M}}$.
	\item $x, u \in \til{S_x}$ and $y, v \in \til{S_y}$, where $\til{S_x}, \til{S_y}$
	are elevations of singular fibers, or equivalently, boundary components of
	$\til{\mathbb{M}}$.
	\item   $d(x, u) \leq D$ and $d(y, v) \leq D$.
	\end{enumerate}
	
	Then $\eta (x,y), \eta (u,v)$ track each other  in a $C-$neighborhood of each other.
\end{lemma}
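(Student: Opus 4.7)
The plan is to reduce $d$-stability of paths in $\FF$ contained in an atom $\til{\mathbb M}$ to $\dpe$-stability of $\dpe$-geodesics in the hyperbolic space $(\til{\mathbb M},\dpe)$, and then transfer the tracking back from $\dpe$ to $d$ across the peripheral tori. By the construction of $\FF$ via de-electrification together with Lemma~\ref{lem-preftracksea}, each $\eta(x,y)\in\FF$ lying in a drilled atom $\til M_r$ uniformly $d$-tracks the $\dpe$-geodesic $\beta(x,y)$ between its endpoints; in the undrilled case $\dpe=d$ on $\til{\mathbb M}$, so $\eta(x,y)$ is itself a $d$-geodesic in a hyperbolic space and $d$-stability is immediate from $\delta$-hyperbolicity. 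Hence it suffices to prove that, for $\til{\mathbb M}=\til M_r$ drilled, $\dpe$-geodesics $\beta(x,y), \beta(u,v)$ with $D$-close endpoints in $d$ track each other uniformly in $d$.

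First I would apply Lemma~\ref{lem-atomshyp}: since $(\til M_r,\dpe)$ is $\delta$-hyperbolic and $\dpe\leq d$, the endpoints of the two $\dpe$-geodesics are also $D$-close in $\dpe$, and thinness of $\dpe$-triangles yields a constant $C_0=C_0(D,\delta)$ such that $\beta(x,y)$ and $\beta(u,v)$ track each other within $\dpe$-distance $C_0$. Let $\PP_r$ denote the family of elevated peripheral tori in $\til M_r$. Outside $\PP_r$ the metrics $d$ and $\dpe$ agree (up to bi-Lipschitz constants built into the Riemannian setup), so $\dpe$-tracking outside $\PP_r$ becomes $d$-tracking directly. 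When $\beta(x,y)$ enters some $P\in\PP_r$ at $p_1$ and exits at $p_2$, the $d$-tracking just outside $P$ forces $\beta(u,v)$ to enter and exit the same $P$ at points $q_1, q_2$ that are $d$-close to $p_1, p_2$. Since the electro-ambient replacement inside $P$ is a $d$-geodesic in the flat Euclidean metric on $P$, and flat-torus $d$-geodesics between $d$-close endpoints uniformly track, the two segments inside $P$ are $d$-close as well.

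The main obstacle is precisely this transfer across a common peripheral torus $P$: a priori $\dpe$-tracking of the geodesics only controls entry and exit points on $P$ modulo the meridian, which has $\dpe$-length zero, so $q_1$ could in principle differ from $p_1$ by an unbounded number of meridians in $d$. The resolution is that just outside $P$ the tracking is already in $d$, not merely in $\dpe$, which pins down the entry point in $d$ and hence the meridian wrapping number of the flat geodesic inside $P$ up to a bounded error. Combining this with Lemma~\ref{lem-track} to move between $\dpe$-geodesics and electro-ambient quasigeodesics, and Lemma~\ref{lem-preftracksea} to move between electro-ambient quasigeodesics and elements of $\FF$, then yields the required uniform constant $C=C(D)$.
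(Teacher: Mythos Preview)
Your approach is essentially the same as the paper's: use Lemma~\ref{lem-track} (tracking away from $\PP_r$) together with flatness of the peripheral $\R^2$'s (tracking inside $\PP_r$), with entry/exit points pinned down in $d$ by the tracking just outside. Two small remarks: the reference to Lemma~\ref{lem-preftracksea} is misplaced (that lemma concerns the family $\FF(M_r,x_1,x_2)$ of preferred extended connectors, not the path family $\FF$ of Section~\ref{sec-defpf}; the relevant fact is simply that elements of $\FF$ restricted to a drilled atom are electro-ambient quasigeodesics by construction, and Lemma~\ref{lem-track} relates these to $\dpe$-geodesics); and your explicit treatment of the meridian ambiguity is more careful than the paper's terse three-sentence proof, which simply asserts that the entry/exit points on $P$ are $d$-close without spelling out why.
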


\begin{proof}
This is a consequence of
 Lemma~\ref{lem-track} which guarantees that 
$\eta (x,y), \eta (u,v)$ track each other away from the elements of $\PP$ in  $\til{\mathbb{M}}$.
Further, since each element of $\PP$ is a flat $\R^2$,  geodesics in each element
of $\PP$ track each other provided they start and end nearby. The Lemma now follows from the construction of electro-ambient quasigeodesics.
\end{proof}

Before starting with the proof of Proposition~\ref{prop-stab}, we point out that the  main idea below
is to divide an element $\eta \in \FF$ into pieces that satisfy the property that its end-points are coarsely well-defined. 

\begin{proof}[Proof of Proposition~\ref{prop-stab}]
Assume, without loss of generality, that $x, u \in \til{S_1}\subset X_a$ and $y, v \in \til{S_2} \subset X_b$, where $a, b \in \TT$. There exists an indexing set
$\II$ giving a sequence of vertices $a=a_0, \cdots, a_n =b$, possibly with repetition, such that $\eta (x,y)$ traverses $\til M_{a_i}$ in order. 
Since $\eta (x,y) \in \FF$, it
\begin{enumerate}
\item does not backtrack in drilled atoms,
\item does not have non-allowable backtracks in  elevations of maximal undrilled subbundles.
\end{enumerate}

If there exists a maximal undrilled subbundle $\BB$ in which $\eta (x,y)$ has an
allowable backtrack (Section~\ref{sec-baccktrackinsubbundle}), we collect together all the (necessarily consecutive) vertices in the sequence $\{a_i\}$ that
correspond to atoms contained in $\BB$ and replace them by a single vertex $B_j$
for some $j$. We refer to such $B_j$ as an \emph{undrilled molecule}.
Thus, if some such $B_j$ occurs, then there exists $a_i \in  \II$ such that $B_j$ occurs in a unique subsequence of the form $a_iB_ja_i$ in $\II$.

Again, if there exists a maximal undrilled subbundle $\BB$ in which $\eta (x,y)$ has a long connector (Section~\ref{sec-smallbaccktrackinsubbundle}), then also 
we collect together all the (necessarily consecutive) vertices in the sequence $\{a_i\}$ that
correspond to (elevated) atoms contained in $\BB$ and replace them by a single vertex $B_k$
for some $k$. We also refer to such a $B_j$ as an \emph{undrilled molecule}.
Thus, if some such $B_k$ occurs, then there exists $a_i\neq a_s \in  \II$ such that $B_k$ occurs in a unique subsequence of the form $a_iB_ka_s$ in $\II$.

The construction of undrilled molecules now allows us obtain a new 
finite sequence
$\JJ= a_{1,1}, \cdots a_{1,m_1}, B_1, a_{2,1}, \cdots a_{2,m_2},B_2, \cdots$. Note that
the only possible repetition allowable in this  sequence are of the following form. If $\eta$ has an allowable backtrack in $B_j$ for some $j$, then there is a triple of
the form $a_{j,m_{j}},B_j, a_{j+1,1}$ with $a_{j,m_{j}} = a_{j+1,1}$ corresponding
to the same elevated drilled atom.

By the properties of an allowable backtrack (Section~\ref{sec-baccktrackinsubbundle}) or long connectors in undrilled elevations (Section~\ref{sec-smallbaccktrackinsubbundle}), 
both $\eta(x,y), \eta(u,v)$ coarsely contain a subpath $\alpha_j \subset B_j$ 
for all molecules $B_j$ (here we are conflating the index $B_j$ with the elevated 
maximal undrilled subbundle it indexes). Let $\til{S_j^-}, \til{S_j^+}$ denote
the boundary components of $B_j$ through which $\eta(x,y), \eta(u,v)$ enter
and leave $B_j$. Then there exist
\begin{enumerate}
\item $z_j^-(x,y) \in \eta(x,y) \cap \til{S_j^-}$, 
$z_j^+(x,y) \in \eta(x,y) \cap \til{S_j^+}$, 
\item $z_j^-(u,v) \in \eta(u,v) \cap \til{S_j^-}$, 
$z_j^+(u,v) \in \eta(u,v) \cap \til{S_j^+}$, 
\end{enumerate} 
such that
\begin{enumerate}
\item $z_j^-(x,y), z_j^-(u,v)$ lie at a distance of at most $2C_1$ from each other
on $\til{S_j^-}$,
\item $z_j^+(x,y), z_j^+(u,v)$ lie at a distance of at most $2C_1$ from each other
on $\til{S_j^+}$,
\item 
\end{enumerate}
Let $\eta_j(x,y)$ denote the subpath of $\eta(x,y)$ between $z_j^-(x,y),z_j^+(x,y)$. Let $\eta_j(u,v)$ denote the subpath of $\eta(u,v)$ between $z_j^-(u,v),z_j^+(u,v)$. 
By hyperbolicity of each $B_j$, $\eta_j(x,y)$ and $\eta_j(u,v)$ track each other
in a $C_1'-$neighborhood of each other, where $C_1'$ depends only on $C_1$ and
the hyperbolicity constant of $B_j$. Hence $C_1'$ is uniform.

To prove Proposition~\ref{prop-stab}, it therefore suffices to assume that the finite sequence $\JJ$ constructed from $\II$ \emph{does not contain any molecule $B_j$}. 

A caveat is in order. We note that if there is a short connector triple
$\til{M_r^1}, \BB, \til{M_r^2}$
as in Section~\ref{sec-smallbaccktrackinsubbundle}, then the atoms of the elevated maximal
undrilled subbundle $\BB$ are \emph{not} combined into a single molecule.
Further, there exists uniform $C_2 \geq 1$ (independent of $\BB$, $\eta(x,y), \eta(u,v)$) such that after a uniformly bounded perturbation if necessary, both $\eta(x,y), \eta (u,v)$ 
\begin{enumerate}
\item intersect
the same set of undrilled atoms $A_1, \cdots, A_k$ of $\BB$ in order without
backtracking in any of the atoms (i.e.\ after leaving any of the undrilled atoms $A_i,$  $\eta(x,y), \eta(u,v)$ do not return to it);
\item $k \leq C_2$ (this follows from the quasiconvexity property  of the union of projections used to define  short connector triples
 in Section~\ref{sec-smallbaccktrackinsubbundle}).
\end{enumerate}
We summarize this by saying that $\eta(x,y), \eta (u,v)$  have no backtracking 
in short connector triples.

We now return to the sequence of atoms  $\JJ = a_0, \cdots, a_m$ where 
\begin{enumerate}
\item $\JJ$ has no molecules, and hence
\item $\JJ$ is the vertex sequence of a \emph{geodesic} in the Bass-Serre tree
$\TT$ of $\til F$. 
\end{enumerate}
The absence of backtracking in short connector triples along with no backtracking in drilled atoms guarantees that $\JJ$ is the vertex sequence of a geodesic.

Let $L$ be the maximum of $C_2$ and the constants  in Lemma~\ref{lem-cobddconnextdd}
and Lemma~\ref{lem-longundrilled}. If there is a 
sequence of more than $L$ contiguous
undrilled blocks in $\JJ$, then choose a maximal  sequence $a_k, \cdots, a_l$
with $l-k \geq L$ indexing such blocks. Then, by Lemma~\ref{lem-longundrilled}, there exists a coarsely
well-defined $\alpha$ connecting boundary components $\til{S_k}, \til{S_l}$
(the \emph{internal boundary components} occurring in Lemma~\ref{lem-longundrilled}) coarsely contained in both $\eta(x,y), \eta (u,v)$.
Hence there exist $z_k(x,y) \in \eta(x,y)\cap \til{S_k}$, $z_k(u,v) \in \eta(u,v)\cap \til{S_k}$ such that $z_k(x,y), z_k(u,v)$ lie at a distance of at most $2C_1$ from each other
on $\til{S_k}$, where $C_1$ is the uniform constant of coarseness from 
Lemma~\ref{lem-longundrilled}. We may therefore assume henceforth that 
there does not exist a sequence of more than $L$ contiguous
undrilled blocks in $\JJ$.

Next, let $a_{k_1}, \cdots, a_{k_L}$ be a subsequence of $\JJ$ so that 
\begin{enumerate}
\item Each $a_{k_i}$ indexes a drilled atom,
\item $k_{i+1} > k_i$,
\item For any $i \in 1, \cdots, L$, every $j$ strictly between 
$a_{k_i}$ and $a_{k_{i+1}}$ indexes an undrilled atom,
\end{enumerate}
Then, by the simplification in the above paragraph $(a_{k_{i+1}}-a_{k_i})\leq L$.
Thus, any subsequence of $L$ drilled atoms in $\JJ$ interpolated 
only by undrilled atoms has length at most $L^2$. We shall refer to such a 
 subsequence of $\JJ$ as a \emph{subsequence of successive $L$ drilled atoms}.
 Note that in a subsequence of successive $L$ drilled atoms, the 
 drilled atoms need not be contiguous.
 
For such a subsequence of successive $L$ drilled atoms, let $a=a_{k_1}, b= a_{k_L}$, 
and $X_{[a,b]}$,
 $\til S_a, \til S_b$ be as in Lemma~\ref{lem-cobddconnextdd}. Then, 
 by Lemma~\ref{lem-cobddconnextdd},
 there exists a coarsely
 well-defined $\alpha$ connecting boundary components $\til{S_a}, \til{S_b}$
coarsely contained in both $\eta(x,y), \eta (u,v)$.

Finally divide $\JJ$ into subsequences of successive $L$ drilled atoms as follows:
let $\JJ = a_0=a_{n_0}, \cdots, a_{n_1}, \cdots, a_{n_s}, \cdots, a_m$, such that
\begin{enumerate}
\item each  $a_{n_i}$, $i=1, \cdots, s$ is a drilled atom
\item The subsequence of $\JJ$ between $a_{n_i}$ and $a_{n_{i+1}}$ (both included)
 has exactly $L$ drilled atoms for $i=0, \cdots, s$.
 \end{enumerate}
 We also call the initial sequence $a_{n_0}, \cdots, a_{n_1}$ a subsequence of successive $L$ drilled atoms. For each such subsequence of successive $L$ drilled atoms, there exists a coarsely
 well-defined $\alpha$ connecting boundary components $\til{S_a}, \til{S_b}$
 as before, and
 coarsely contained in both $\eta(x,y), \eta (u,v)$.
 
Setting $c=a_0, d= a_{n_s}$, it follows that there exist $C'$ and $y',v' \in \til{S_d}$
and subpaths  $\eta(x,y'), \eta (u,v')$ of $\eta(x,y), \eta (u,v)$ respectively
\begin{enumerate}
\item starting at $x, u$ respectively,
\item ending at $y', v'$ respectively,
\end{enumerate}
such that
\begin{enumerate}
\item $y', v'$ lie at a distance of at most $C'$ from each other
on $\til{S_d}$,
\item  $\eta(x,y'), \eta (u,v')$ track each other
in a $C'-$neighborhood of each other,
\item $C'$ is  independent of $x,y,u,v, \JJ$.
\end{enumerate}

 Let $\eta(y',y), \eta(v',v)$ denote the subpaths of  $\eta(x,y), \eta (u,v)$ respectively
 \begin{enumerate}
 	\item starting at $y', v'$ respectively,
 	\item ending at $y, v$ respectively.
 \end{enumerate}
 Let $p=a_{n_s}, q=a_m$. Then $X_{[p,q]}$ is a 3-manifold given by a concatenation of at most
 $L^2$ atoms. The proof of Lemma~\ref{lem-atomshyp} now shows furnishes relative hyperbolicity for each such
 $X_{[p,q]}$. Since there are only finitely many possibilities, the constants
 of relative hyperbolicity are uniform.
 The tracking properties of $\eta(y',y), \eta(v',v)$ now follow from relative hyperbolicity. Combining this with the tracking properties of $\eta(x,y'), \eta (u,v')$ already established, Proposition~\ref{prop-stab} follows.
\end{proof}

\subsection{Checking conditions 1,2,4,5,6 of Theorem~\ref{thm-sisto}}\label{sec-12456} We shall now show that the family $\FF$ defined above satisfy the conditions of Theorem~\ref{thm-sisto}.

\noindent {\bf  Condition (1) of Theorem~\ref{thm-sisto}:} \\ After rescaling $F$ if necessary, we might as well assume that the distance between any singular fiber of $F$
and the boundary $\partial (N_\ep(\sigma_i))$ of the neighborhood of any drilled curve
is at least 4. Hence, if $d_X (x,y) \leq 2$, then the condition follows from strong relative hyperbolicity of $\til M_r$ where $M_r$ is any drilled atom in $F$
(Lemma~\ref{lem-track}).\\

\noindent {\bf  Condition (2) of Theorem~\ref{thm-sisto}} is a consequence of the proof of stability of elements of $\FF$, Proposition~\ref{prop-stab}. \\

\noindent {\bf  Condition (4) of Theorem~\ref{thm-sisto}} follows from the fact that
any element of $\FF$ starting on $P_1\in \PP$ and ending on an element $P_2 \neq 
P_1$ of $\PP$, necessarily has points in the complement of $\bigcup_{P \in \PP} P$.\\

\noindent {\bf  Condition (5) of Theorem~\ref{thm-sisto}} follows from Corollary~\ref{cor-maln2}. Indeed, for any $P_1, P_2 \in \PP$,
 there exist $[a_1,a_2] \in \TT$ such that
  $P_i \in X_{a_i}$, for $i=1,2$. The strong relative hyperbolicity of 
  $X_{[a_1,a_2]}$ relative to the collection of elements of $\PP$ contained in it (with constants depending only on $d_\TT(a_1, a_2)$) furnishes
  Condition (5).  \\

 \noindent {\bf  Condition (6) of Theorem~\ref{thm-sisto}} also follows from 
Corollary~\ref{cor-maln2}. Indeed, as in Condition (5) above, we can choose
 $[a_1,a_2] \in \TT$ such that $d_\TT(a_1, a_2) \leq 2k$, where $k$ is as in
 Condition (6). Then 
 strong relative hyperbolicity of 
 $X_{[a_1,a_2]}$ relative to the collection of elements of $\PP$ contained in it 
 furnishes the constant $K$ required by Condition (6).\\

\subsection{Thin Triangles in $\FF$}\label{sec-thintriangles} It remains to prove the thin triangle condition, i.e.\ 
Condition 3 of Theorem~\ref{thm-sisto}. Let $a, b, c \in \til F$.

 Let $\gamma_1^d, \gamma_2^d, \gamma_3^d$ be sides of a quasigeodesic triangle
 in $(\til F, \dpe)$  with vertices $a, b, c$ used for constructing elements
 of $\FF$. Let
  Let $\gamma_1, \gamma_2, \gamma_3 \in \FF$ denote the elements of the
   path family constructed from them. Let $a$
 (resp.\ $b, c$) be the vertex opposite $\gamma_1 $ (resp.\ $ \gamma_2, \gamma_3$).
 Let $z$ denote a centroid.
 \\
 
 \noindent {\bf Case 1: $z$ lies in an undrilled atom  $\til M$ in $\til F$.}\\
In this case, thinness of triangles follows  from stability,
Proposition~\ref{prop-stab}. Indeed $z$ lies close to each of $\gamma_1, \gamma_2, \gamma_3$ (in the usual unelectrified metric on $\til F$), as each $\til S_i$, and hence $\til M$ is properly embedded in $(\til F,\dpe)$ (Lemma~\ref{lem-fiberproper}).
Hence for  $a$ (or $b$, or $c$) there exist a pair of paths $ \sigma_2, \sigma_3$
(given by subpaths of $ \gamma_2, \gamma_3$ respectively) starting from $a$
and ending close to $z$ in $\til M$. By Proposition~\ref{prop-stab},  $ \sigma_2, \sigma_3$ track each other (with uniform constants). A similar argument
works for $b, c$ completing this case.\\

 \noindent {\bf Case 2: $z$ lies in a drilled atom $\til M_r$ in $\til F$.}\\ In this case, there are three boundary components
$\til S_x$, $\til S_y$, $\til S_z$, of $\til M_r$, and subpaths $\beta_1^d, \beta_2^d, \beta_3^d$
of $\gamma_1^d, \gamma_2^d, \gamma_3^d$ such that the following holds.
In $\til M_r$ equipped with $\dpe$ the
subpaths $\beta_1^d, \beta_2^d, \beta_3^d$ 
 pass  close to $z$. We assume that $\beta_1^d, \beta_2^d$ have one
end-point each on $\til S_z$, $\beta_3^d, \beta_1^d$ have one
end-point each on $\til S_y$, $\beta_2^d, \beta_3^d$ have one
end-point each on $\til S_x$.

 This gives a hexagon in $\til M_r$, where
the other three sides (other than $\beta_1^d, \beta_2^d, \beta_3^d$)
 are geodesics in  $\til S_x$, $\til S_y$, $\til S_z$, joining
the two intersection points of the segments. Call these sides the \emph{complementary 
geodesics}, and denote them by $\alpha_x , \alpha_y, \alpha_z$ respectively. Call the subpaths  $\beta_1^d, \beta_2^d, \beta_3^d$ the \emph{internal 
geodesics}.

Suppose that at least
 one of the complementary 
geodesics, say $\alpha_x$ is long.  Note that $\alpha_x\subset \til S_x$ joins $x_2, x_3$, where $x_2, x_3$ are respectively the
intersection points  of $\beta_2^d, \beta_3^d$  with $\til S_x$.
Let $p_2, p_3 \in \beta_2^d, \beta_3^d$ respectively be points that are $\delta-$close to $z$ in the $\dpe-$metric, where $\delta$ is a constant depending only on the hyperbolicity constant of $(\til F, \dpe)$ (Theorem~\ref{thm-tilfdpelhyp}). 
Since $\til S_x$ is quasi-isometrically embedded (Lemma~\ref{lem-atomshyp})
in $(\til M_r,\dpe)$, $\alpha_x$ is a quasi-geodesic in $(\til M_r,\dpe)$.
(The quasigeodesic constants are uniform as there are only finitely many possibilities for drilled atoms.)
Let $\pi_x$ denote a nearest-point
projection of  $(\til M_r,\dpe)$ onto $\alpha_x$. Let $\pi_x(p_i)=q_i$,
$i=2,3$.  Then the $\dpe-$length of
the subsegment $\alpha_x(q_2,q_3)$ of $\alpha_x$ joining $q_2, q_3$ is at most $ 8\delta$ 
(see \cite[Lemma 3.2]{mitra-trees} for instance for a proof of this standard fact).
Since $\alpha_x$ is a quasi-geodesic in $(\til M_r,\dpe)$, the length of
$\alpha_x(q_2,q_3)$ in the intrinsic metric on $\til S_x$ is at most $ C\delta$, where $C$ depends only on the uniform  quasigeodesic constant for $\alpha_x$,
and is therefore uniform.

Let $[x_2,q_2]$ (resp.\ $[x_3,q_3]$)
 denote the geodesic in $\til S_x$ joining $x_2,q_2$ (resp.\ $x_3,q_3$).
 Also, let $[q_2,p_2]_{pel}$ (resp.\  $[q_3,p_3]_{pel}$) denote the  $\dpe-$geodesic segments in $\til{M_r}$ joining $q_2,p_2$ (resp.\ $q_3,p_3$).
 Then
 \begin{enumerate}
 \item $\beta_2'=[x_2,q_2] \cup [q_2,p_2]_{pel}$ is a $\dpe-$geodesic with uniform constants
 (cf.\ \cite[Lemma 3.2]{mitra-trees}),
 \item  $\beta_3'=[x_3,q_3] \cup [q_3,p_3]_{pel}$ is a $\dpe-$geodesic with uniform constants.
 \end{enumerate}
 Let  $\beta_2$ (resp.\ $\beta_3$) denote the subpaths of $\beta_2^d$
 (resp.\  $\beta_3^d$)
 joining $x_2, p_2$ (resp. $x_3, p_3$). By relative hyperbolicity of $\til M_r$
 (Lemma~\ref{lem-atomshyp}), and the tracking properties in Lemma~\ref{lem-track},
there exists $q_2' \in \beta_2$ (resp.\ $q_3' \in \beta_3$) such that 
the distance (in the unelectrified metric $d$ on $\til M_r$) between $q_2, q_2'$
(resp.\ $q_3,q_3' $) is uniformly bounded by a uniform constant $D'$. Hence
$d(q_2',q_3')\leq C\delta + 2D'$.

Let $\gamma_2'$ (resp. $\gamma_3'$) be the subpath of $\gamma_2$ from $a$ to $q_2'$ (resp.\ $a$ to $q_3'$).  By Proposition~\ref{prop-stab}, $\gamma_2'$ and $\gamma_3'$ track each other within a uniform distance $C''$ of each other.

Removing the initial subpath of $\beta_2^d$ (resp.\  $\beta_3^d$) between $x_2, q_2'$ (resp.\ $x_3, q_3'$), we can replace the complementary geodesic $\alpha_x$ 
by a geodesic $\alpha_x'$ of length  at most $ C\delta + 2D'$ joining $q_2',q_3'$.
Carrying out such replacements for all the long complementary 
geodesics, we obtain a $\dpe-$quasigeodesic hexagon whose complementary 
geodesics are uniformly bounded in length. Let $\beta_f^1, \beta_f^2, \beta_f^3$
be the internal geodesics of the resulting hexagon. By strong relative hyperbolicity of $\til M_r$ with respect to $\PP_r$, the internal geodesics 
$\beta_f^1, \beta_f^2, \beta_f^3$ satisfy Condition 3 of Theorem~\ref{thm-sisto}.

Combining this with the tracking properties of pairs such as $\gamma_2'$ and $\gamma_3'$ proved above, using Proposition~\ref{prop-stab},  it follows that $\gamma_1, \gamma_2, \gamma_3 \in \FF$  satisfy Condition 3 of Theorem~\ref{thm-sisto}.

\begin{proof}[Proof of Theorem~\ref{thm-relhyp}]
That the family $\FF$ constructed in Section~\ref{sec-defpf} satisfies Conditions
1,2,4,5,6 of Theorem~\ref{thm-sisto} has been established in Section~\ref{sec-12456}. The thin triangles condition, i.e.\ Condition
3 of Theorem~\ref{thm-sisto} has been checked above in this subsection. Hence,
by Theorem~\ref{thm-sisto}, $\til F$ is strongly hyperbolic relative to the collection $\PP$.
\end{proof}

\subsection{Relative quasiconvexity} Let $S_0$ be a drilled fiber in $E$.
Further, after isotoping the drilled curves if necessary, we  can assume that the collection  of  drilled curves $\sigma_1, \cdots, \sigma_k$ in $S_0$
is maximal, i.e.\ no other drilled curve may  be isotoped 
into $S_0$ in the complement of $\cup_{i=1, \cdots, k} N_\ep(\sigma_i)$.

\begin{defn}\label{def-reduced}
We say 
that the collection of drilled fibers is \emph{reduced} if the collection of 
drilled curves in any singular fiber is maximal in the above sense.
\end{defn}
 Then $S_0 \setminus (\bigcup_i \sigma_i)$ consists of finitely many components $K_1, \cdots, K_m$. By Lemma~\ref{kld}, we have the following:
 \begin{lemma}\label{lem-qc}
Each $\pi_1(K_i)$ is quasiconvex in $\pi_1(E)$. 
 \end{lemma}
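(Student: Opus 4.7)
The plan is to reduce the lemma directly to Theorem~\ref{kld} (Dowdall--Kent--Leininger): it will suffice to verify that each $\pi_1(K_i)$, regarded as a subgroup of $H=\pi_1(S_0)=\pi_1(S)$ via the inclusion $K_i\hookrightarrow S_0$, is finitely generated and of infinite index in $H$.

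Injectivity of the induced map $\pi_1(K_i)\to H$ is routine: since each drilled curve $\sigma_j\subset S_0$ is essential and the $\sigma_j$ are pairwise disjoint and pairwise non-parallel (the latter being forced by Definition~\ref{def-drillbdl}(2), which prohibits distinct drilled curves from being freely homotopic even in the ambient $E$), the subsurface $K_i$ is incompressible in $S_0$. Finite generation is then immediate from compactness of $K_i$.

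For infinite index, I would argue as follows. Since $S_0$ is by hypothesis a drilled fiber, it contains at least one $\sigma_j$, so each component $K_i$ has non-empty boundary; consequently $K_i$ deformation retracts to a finite graph and $\pi_1(K_i)$ is free. On the other hand, hyperbolicity of $\Gamma$ forbids $\Z\oplus\Z$ subgroups, which (together with the existence of essential simple closed curves to drill) forces $H$ to be the fundamental group of a closed surface of genus at least $2$. Such a surface group is not virtually free, so any free subgroup has infinite index. Hence $[H:\pi_1(K_i)]=\infty$, and Theorem~\ref{kld} then delivers quasiconvexity of $\pi_1(K_i)$ in $\Gamma=\pi_1(E)$.

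The entire content of the lemma is absorbed into Theorem~\ref{kld}; the reduction hypothesis of Definition~\ref{def-reduced} plays no essential role beyond ensuring that each $K_i$ is a genuine essential subsurface (rather than, for instance, a disc or an annulus cobounding two parallel drilled curves, both of which are already ruled out by Definition~\ref{def-drillbdl}(2)). There is no substantive obstacle here.
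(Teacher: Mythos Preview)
Your proposal is correct and matches the paper's approach exactly: the paper states this lemma as an immediate consequence of Theorem~\ref{kld}, and you have simply supplied the routine verification that $\pi_1(K_i)$ is a finitely generated infinite-index subgroup of $H$. One minor remark: your infinite-index argument assumes $S$ is closed; this is the paper's implicit setting, but the conclusion holds more generally since any proper essential subsurface of a hyperbolic surface has infinite-index fundamental group.
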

 
 Next, we consider $F$ and $(\til F, \dpe)$. Then we have
 
 \begin{lemma}\label{lem-dpeqc}
 Let $K$ be a component of $S_0 \setminus (\bigcup_i \sigma_i)$ as above for $S_0$ a drilled fiber. Then	there exists $C \geq 1$ such that any elevation 
 $\til K$ is  quasiconvex $(\til F, \dpe)$.
 \end{lemma}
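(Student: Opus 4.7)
The plan is to apply Corollary~\ref{cor-qc} via Remark~\ref{rmk-qc}, regarding $K$ as a $\pi_1$-injective auxiliary subspace of the vertex space associated to the singular fiber $S_0$ in the graph of spaces decomposition of $F$. Since $K$ is the closure of a component of $S_0 \setminus \bigcup_i \sigma_i$, its boundary consists of essential simple closed curves, so $\pi_1(K)$ is a quasiconvex subgroup of the closed surface group $\pi_1(S_0)$ and any elevation $\til K$ is qi-embedded in the corresponding elevation of the vertex space. This verifies the hypotheses of Remark~\ref{rmk-qc}, so it suffices to establish that essential annuli in $F$ starting at $K$ flare in all directions in the sense of Definition~\ref{def-flareinall}.

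Let $\AAA$ be such an essential annulus. The $\{0\}\times S^1$ of $\AAA$ maps into $K$, so its core curve represents an element of $\pi_1(K) \subset H = \pi_1(S) \subset \Gamma$. I claim this core curve cannot be freely homotopic in $E$ to a power of any drilled curve $\sigma_j$. Indeed, if it were, then since both loops already lie in the normal subgroup $H$, one would obtain an essential annulus in $E_S$ with one boundary on a loop in $K$ and the other on a lift of a drilled curve, which is forbidden by Condition~\ref{cond-hyp}. The classification of essential annuli in $F_S$ from Section~\ref{sec-essentialannuli} therefore places $\AAA$ into Case 1: $\AAA$ is a non-backtracking annulus with core curve having free homotopy type distinct from any drilled curve.

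Corollary~\ref{cor-weakannuliflare} now provides the desired one-sided flare for $\AAA$ with uniform constants $\lambda > 1$ and $m > 1$. Feeding this flaring input into Corollary~\ref{cor-qc} (as adapted by Remark~\ref{rmk-qc}) yields a uniform constant $C \geq 1$ such that every elevation $\til K$ is $C$-quasiconvex in $(\til F, \dpe)$.

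The principal subtlety is ruling out atomic wrapping sub-annuli inside $\AAA$: such a wrapping contributes zero length under $\dpe$ and could in principle defeat flaring. Lemma~\ref{lem-1wrap} together with the classification of annuli shows, however, that any wrapping sub-annulus of $\AAA$ forces the core curve of $\AAA$ into the free homotopy class of a non-trivial power of some drilled curve, which we have just excluded using Condition~\ref{cond-hyp} and the disjointness of $K$ from the $\sigma_i$. Once this is ruled out, the rest is an immediate application of previously established machinery.
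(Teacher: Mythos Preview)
Your approach is exactly the paper's: attach $K$ as an auxiliary vertex via Remark~\ref{rmk-qc}, verify flaring in all directions, and invoke Corollary~\ref{cor-qc}. There is, however, a genuine gap in your intermediate claim that the core curve of $\AAA$ cannot be freely homotopic in $E$ to a power of a drilled curve. This is false precisely when the core curve is peripheral in $K$: a boundary-parallel curve in $K$ \emph{is} freely homotopic in $S_0$ (hence in $E$) to one of the $\sigma_i$ bounding $K$. Your appeal to Condition~\ref{cond-hyp} does not rule this out, since that condition concerns essential annuli in $F_S$ between \emph{distinct elevations of drilled curves}, not annuli with one boundary on an arbitrary loop in $K$.

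The repair is to observe that you do not actually need the core curve to avoid the drilled classes. If the core curve is non-peripheral in $K$, your argument stands and Case~1 applies. If it is peripheral (homotopic to some $\sigma_i$), the annulus either remains non-backtracking---in which case the \emph{proof} of Corollary~\ref{cor-weakannuliflare} still applies verbatim (it only uses that a non-backtracking annulus in $F_S$ is essential in $E_S$, together with Proposition~\ref{prop-conversebf})---or it contains an atomic wrapping piece, landing in Cases~2 or~3 of Section~\ref{sec-hypofdpe}. The argument there establishes the one-sided flare in those cases as well, using that the $\dpe$-length of the wrapping piece is uniformly bounded. The paper's own proof is terse on this point, citing only Corollary~\ref{cor-weakannuliflare}, but it implicitly leans on the full trichotomy analysis of Section~\ref{sec-hypofdpe}; you should do the same rather than trying to exclude Cases~2 and~3 outright.
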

 
 \begin{proof}
  The same argument  as in Section~\ref{sec-hypofdpe} in the paragraph "Identifying $\rho-$thin annuli" identifies the collection of essential annuli with core curve homotopic to a curve in $K$. Corollary~\ref{cor-weakannuliflare}
  now establishes that $K$ flares in all directions in the sense of Definition~\ref{def-flareinall}. Hence, by Corollary~\ref{cor-qc} and Remark~\ref{rmk-qc}, 	there exists $C \geq 1$ such that any elevation 
  $\til K$ is  quasiconvex $(\til F, \dpe)$.
 \end{proof}

 We finally have the following:
 
 \begin{prop}\label{prop-relqc}
 Let $K$ be a component of $S_0 \setminus (\bigcup_i \sigma_i)$ as above for $S_0$ a drilled fiber. Then	there exists $C' \geq 1$ such that any elevation 
 $\til K$ is relatively $C'-$quasiconvex.
 \end{prop}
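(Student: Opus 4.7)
The plan is to upgrade the $\dpe$-quasiconvexity of $\til K$ established in Lemma~\ref{lem-dpeqc} to quasiconvexity in the fully electrified metric $\de$, which, in view of Theorem~\ref{thm-relhyp}, is exactly what is needed for relative $C'$-quasiconvexity of $\pi_1(K)$ in $G$. The crucial geometric observation is that for each elevated boundary torus $\til T_i$ that meets $\til K$, the intersection $\til K \cap \til T_i$ consists of lines parallel to the longitudinal direction, i.e.\ parallel to an elevation of the drilled core $\sigma_i$. Since the partial electrification collapses the meridian direction, the torus $\til T_i$ in the $\dpe$-pseudometric coarsely degenerates to a single line in the longitudinal direction, and $\til K \cap \til T_i$ coarsely equals all of $\til T_i$ in $(\til F,\dpe)$.

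To carry out the upgrade, I would take arbitrary $x, y \in \til K$ and a $\de$-geodesic $\hat\lambda$ joining them. I then form the electro-ambient quasigeodesic $\lambda_{ea}$ as in Section~\ref{sec-ea}: on each elevated torus $\til T_i$ crossed by $\hat\lambda$, replace the collapsed subpath by the flat Euclidean geodesic joining its entry and exit points on $\til T_i$. A routine argument adapting the tracking properties surrounding Lemma~\ref{lem-track} shows that $\lambda_{ea}$ is a uniform quasigeodesic in $(\til F, \dpe)$. By Lemma~\ref{lem-dpeqc}, $\lambda_{ea}$ therefore lies in a uniform $\dpe$-neighborhood $N^{\dpe}_C(\til K)$ of $\til K$; since $\de(x,y) \leq \dpe(x,y)$ pointwise, it also lies in the corresponding $\de$-neighborhood. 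Finally, $\hat\lambda$ and $\lambda_{ea}$ agree off the elevated tori, and any subpath of $\hat\lambda$ contained in a single torus $\til T_i$ is $\de$-distance zero from the entry/exit points on that torus, so $\hat\lambda$ itself lies in a uniform $\de$-neighborhood of $\til K$.

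This proves $\de$-quasiconvexity of $\til K$. Relative $C'$-quasiconvexity of $\pi_1(K)$ then follows from the standard equivalence between quasiconvexity in the coned-off/electrified metric and relative quasiconvexity, combined with the observation that each intersection $\pi_1(K) \cap P_i^g$ is either trivial or the infinite cyclic subgroup generated by a boundary longitude of $K$ on $T_i$ — i.e., that the induced peripheral structure on $\pi_1(K)$ is well-defined and compatible with the ambient one on $G$. The main technical obstacle is verifying that $\lambda_{ea}$ is a uniform $\dpe$-quasigeodesic: the Euclidean detours on the tori can a priori be long, and one must rule out the possibility of shorter $\dpe$-paths through the ambient space shortcutting them. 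This in turn reduces, via the tracking properties of Lemma~\ref{lem-track} and the flat product structure of the partially electrified metric on each boundary torus, to the analogous statement for electro-ambient quasigeodesics inside a single drilled atom, which is built into Lemma~\ref{lem-atomshyp}.
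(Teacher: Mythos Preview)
Your approach is genuinely different from the paper's. The paper does not pass through the fully electrified metric $\de$ at all: it localizes to a bounded-diameter subtree $\TT_0$ around the vertex carrying $\til S_0$, observes (via Lemma~\ref{lem-dpeqc}) that any $\dpe$-geodesic between points of $\til K$ stays in $X_0=\Pi^{-1}(\TT_0)$, establishes strong relative hyperbolicity of $X_0$ separately, and then notes that the preferred path $\eta(x,y)\in\FF$ from Section~\ref{sec-defpf} can be taken inside $\til K$ itself. Your route---upgrading $\dpe$-quasiconvexity to $\de$-quasiconvexity and then invoking the Hruska-style equivalence with relative quasiconvexity---is more conceptual and avoids the explicit path-family machinery; the paper's route is more self-contained within the framework already built.

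There is, however, a gap in your justification of the key step. You claim that showing $\lambda_{ea}$ is a uniform $\dpe$-quasigeodesic in all of $\til F$ ``reduces, via the tracking properties of Lemma~\ref{lem-track} \ldots, to the analogous statement for electro-ambient quasigeodesics inside a single drilled atom.'' But Lemma~\ref{lem-track} is stated and proved only for a single drilled atom $\til M_r$, and a global $\de$-geodesic in $\til F$ has no reason to restrict to $\de$-geodesics (or even quasigeodesics) in the individual atoms it traverses; it may backtrack through atoms, and the atoms' intrinsic electric metrics differ from the restriction of the global one. The reduction you describe is not routine and is not what the paper does.

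The correct way to complete your argument is to invoke Theorem~\ref{thm-relhyp} directly rather than atom-by-atom: once $\til F$ is known to be strongly hyperbolic relative to $\PP$, the general partial-electrification tracking result \cite[Lemma~1.21]{mahan-pal} (or equivalently the standard fact that electro-ambient representatives of $\de$-geodesics are uniform quasigeodesics in a relatively hyperbolic space) applies \emph{globally} to $(\til F,d,\PP)$ and gives exactly the tracking you need. With that substitution your argument goes through; you are using the already-established relative hyperbolicity of the ambient group to deduce relative quasiconvexity of the subgroup, which is legitimate and not circular.
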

 
 \begin{proof} Let $\TT$ denote the Bass-Serre tree of $\til F$, 
 	$\Pi:\til F \to \TT$ denote the tree of spaces structure, and $v$ be the
 	vertex such that $\til S_0 \subset \til M_v$. Let $\TT_0$ denote the
 	$C-$neighborhood of $v_0$ in $\TT$, where $C$ is as in Lemma~\ref{lem-dpeqc}.
 	Also, let $X_0 = \Pi^{-1}(\TT_0)$.
 By Lemma~\ref{lem-dpeqc} the $\dpe-$geodesic joining a pair of points
 $x, y \in \til K$ lies in $X_0$. Further, the proof of Lemma~\ref{lem-fiberproper}  establishes that $\til K$ with its intrinsic metric
 is qi-embedded in $(X_0,\dpe)$.
 
The proof of Theorem~\ref{thm-relhyp} applied to $X_0$ establishes strong-relative hyperbolicity of $X_0$ relative to the collection $\PP_0$ consisting of the
elements of $\PP$ contained in $X_0$. Since  $\til K$ with its intrinsic metric
is qi-embedded in $(X_0,\dpe)$, the construction of the path family $\FF$ in Section~\ref{sec-defpf} shows that we can take $\eta(x,y)$ to lie on $\til K$
for $x, y \in \til K$. Hence, $\til K$ is relatively $C'-$quasiconvex for some $C'$.
 \end{proof}

\section{Cubulation}\label{sec-cub} We refer the reader to \cite{hw-gafa,wise-cbms,wise-hier} for details on virtually special CAT(0) cube-complexes. However, before attempting  to cubulate $G=\pi_1(F)$, we first describe another graph of groups structure on $G$.  

\subsection{Another graph of groups structure on $G$}\label{sec-gog} 
	We  construct a new graph of groups structure on $\pi_1(F)$, with a new underlying graph $\GG_0$ as follows. 
	
	Assume that the collection of drilled fibers is reduced in the sense of Definition~\ref{def-reduced}.
The vertex groups of $\GG_0$ 
	are all isomorphic to $H=\pi_1(S)$, and consist of the following:
	\begin{enumerate}
		\item For every singular fiber $S_v$, necessarily undrilled by construction, we have a vertex group $G_v$ isomorphic to $H$.
		\item For a drilled edge $e$, let $S_{x_1}, \cdots, S_{x_p}$ denote
		the drilled fibers where $x_i$ are points in order along $e$, where $e$
		has initial and final vertices $v_i, v_f$.
	  Interpolate vertices 
	 $v_i=v_0, v_1, \cdots, v_p =v_f$ where $v_i$ lies between $x_i, x_{i+1}$
	 for $1\leq i \leq p-1$. The points $x_1, \cdots, x_p$   will be referred to as \emph{special drilled points}, and $ v_1, \cdots, v_{p-1}$  will be referred to as \emph{interpolating points}.
\end{enumerate} 

The vertices of $\GG_0$ consist of the vertices of $\GG$ \emph{along with 
interpolating points}.

We now define the edge spaces. The intervals $[v_i, v_{i+1}]$  will be termed \emph{subdivision intervals}. Note that each subdivision interval contains a unique special drilled point $x_{i+1}$.
Let $\eta=[c,d]$ be a subdivision interval, so that the vertex groups $G_c, G_v$
are isomorphic to $H$. Now, restrict the drilled bundle $F$ to $\eta$ to obtain
$F_\eta$. Then $F_\eta$ is homeomorphic to $S \times [0,1]$ after removing 
$\ep-$neighborhoods of a non-empty 
family $\{\sigma_i\}$ of disjoint, homotopically distinct, essential simple closed curves on a unique drilled fiber $S_{x_i}$. Let $K_{\eta,i},  \, i=1, \cdots, m$ denote the components of $S \setminus \bigcup_i \sigma_i$. Then $\pi_1(F_\eta)$ admits a graph
of group description with two vertex groups $G_c, G_d$, each isomorphic to $H$; and $m$ edge groups, isomorphic to $\pi_1(K_{\eta,i}), \, i=1, \cdots, m$. Let $\{e_{\eta,i}, i=1,\cdots, m\}$ denote the resulting edges `living over' 
the subdivision interval $\eta$.
We call these \emph{subdivision edges}. The groups $\pi_1(K_{\eta,i}), \, i=1, \cdots, m$ are called the  \emph{subdivision edge groups}.
The edges of $\GG_0$ are obtained as follows.
\begin{enumerate}
\item  If there is an edge of $\GG$ that has no special drilled points, leave it as it is in $\GG_0$. We refer to these as \emph{undrilled edges} of $\GG_0$.
\item Next, replace each interval with at least one  special drilled point by subdivision intervals, and finally each 
subdivision interval by the subdivision edges that live over it.
\end{enumerate} 

Note that the above discussion goes through even when the initial and final vertices of the drilled edge $e$ coincide,
so that there is a monodromy map $\phi$ for the bundle $E$ restricted to the (closed) $e$. 
We explicate the inclusion maps for $\pi_1(K_{\eta,i}), \, i=1, \cdots, m$ into the vertex groups $H_i, H_f$ corresponding to the vertices $v_i, v_f$ (the initial and final vertices of $e$). We identify $\pi_1(K_{\eta,i}) \, i=1, \cdots, m$ with subgroups of $H_i$ via the product structure on $S\times [0,d)$. Then, modulo this identification, the edge-to-vertex group maps for $\pi_1(K_{\eta,i}), \, i=1, \cdots, m$ into $G_c$ is the inclusion. The same holds for $d \neq v_f$.
For $v_f=d$, the  edge-to-vertex group maps for $\pi_1(K_{\eta,i}), \, i=1, \cdots, m$ into $G_d$ is given by inclusion followed by $\phi_\ast$, the map induced by the monodromy $\phi$.

It remains to identify the edges and edge groups for the underlying graph $\GG_0$. The edges are of exactly two kinds:
 undrilled edges, and subdivision edges. The edge groups for undrilled edges correspond to $H$. The edge groups for subdivision edges correspond to the  subdivision edge groups $\pi_1(K_{\eta,i}), \, i=1, \cdots, m$.
Finally, the edge-to-vertex group maps are given as above.

\begin{defn}\label{def-reducedgraph}
The graph $\GG_0$ obtained as above will be called the \emph{reduced form} of $\GG$
for $F$.
\end{defn}

\begin{defn}\label{def-component}
A maximal connected  undrilled subgraph $\KK$ of $\GG$ will be called an \emph{undrilled
	component of $\GG$}, and  $\FF_\KK:=\Pi_0^{-1} (\KK)$ will be called  an \emph{undrilled
	constituent of $\FF$}. 
\end{defn}

We finally modify $\GG_0$ to another graph $\GG_\K$ by collapsing each undrilled
component $\KK$ of $\GG$ to a single vertex, i.e.\ $\GG_\K$  is the quotient space
obtained from $\GG_0$ under the equivalence relation $x\sim y$ if and only if
$x, y $ belong to the same undrilled
component $\KK$ of $\GG$. Let $v_{\KK}$ be the resulting vertex
of $\GG_\KK$. We refer to $v_{\KK}$ as the \emph{undrilled
	component vertex of $\GG_\K$ associated with $\KK$.}
	Thus, the set of  vertices of $\GG_\K$ is precisely the collection 
	$\{v_{\KK}\}$ of undrilled
	component vertices.
	
The vertex space associated to $v_{\KK}$ is then declared to be
$\FF_\KK$. The edge to vertex inclusions are given by the composition of
\begin{enumerate}
\item edge to vertex inclusion maps over $\GG_0$ composed with  
\item  inclusions of vertex spaces over $v\in \GG_0$  to the 
vertex spaces over $v_\KK\in \GG_\K$, where  $\KK$ is the undrilled
component  of $\GG$ containing $v$.
\end{enumerate}

\begin{defn}\label{def-canred}
$\GG_\KK$ will be called the \emph{canonical reduction} of $\GG$
for $F$.
\end{defn}
We conclude this subsection with the following observation that follows from
the above construction.

\begin{lemma}\label{lem-edgesofcanonical}
The edge spaces of $\GG_\KK$ are precisely the components   of $S_d \setminus \bigcup_i \sigma_i$, where $S_d$ ranges over a reduced collection of  drilled fibers .
\end{lemma}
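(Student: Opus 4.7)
The plan is a direct unpacking of the constructions defining $\GG_0$ and $\GG_\KK$ from Section~\ref{sec-gog}. By construction, $\GG_0$ has exactly two kinds of edges: (i) \emph{undrilled edges}, i.e.\ edges of $\GG$ containing no special drilled points, each with edge space $S$ and edge group $H = \pi_1(S)$; and (ii) \emph{subdivision edges} $\{e_{\eta,i}\}$, living over subdivision intervals $\eta = [v_{j-1}, v_j]$ that each contain a unique special drilled point $x_j$. The edge spaces of the subdivision edges over $\eta$ are precisely the components $K_{\eta,i}$ of $S_{x_j} \setminus \bigcup_i \sigma_i$.

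First I would verify that every undrilled edge of $\GG_0$ lies entirely inside some undrilled component $\KK$ of $\GG$, and hence is collapsed to a single vertex $v_\KK$ in $\GG_\KK$. This is immediate from the definitions: an undrilled edge contains no drilled points in its interior or endpoints, so by maximality it sits in the unique undrilled component through it. Consequently no undrilled edge survives passage to $\GG_\KK$. Conversely, I would check that every subdivision edge $e_{\eta,i}$ survives in $\GG_\KK$: its two endpoints $v_{j-1}, v_j$ are locally separated along the ambient drilled edge by the drilled point $x_j$, hence they lie in two (possibly distinct, possibly equal) undrilled components of $\GG$ that are only identified along other parts of $\GG$. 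Since the collapse $\GG_0 \to \GG_\KK$ identifies only pairs lying in a common undrilled component, the subdivision edge $e_{\eta,i}$ is not collapsed; it descends either to an edge between two distinct vertices of $\GG_\KK$ or to a loop at a single vertex.

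Combining these two observations, the edges of $\GG_\KK$ are precisely the subdivision edges of $\GG_0$, whose edge spaces are by construction the components $K_{\eta,i}$ of $S_{x_j} \setminus \bigcup_i \sigma_i$ as $x_j$ ranges over the special drilled points. Since the collection of drilled fibers is assumed reduced (Definition~\ref{def-reduced}), the special drilled points exhaust a reduced collection of drilled fibers $S_d$, and the claim follows. The argument is entirely definitional bookkeeping; the only point requiring a moment's thought is ruling out the degenerate possibility that two distinct subdivision edges get identified under the collapse, which is blocked by the observation that collapsing occurs only within undrilled components and no undrilled component contains a subdivision edge.
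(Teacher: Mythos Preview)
Your proposal is correct and takes essentially the same approach as the paper, which in fact gives no explicit proof at all: the lemma is stated as ``the following observation that follows from the above construction,'' and your argument is precisely a careful unpacking of that construction. One small terminological wrinkle worth noting is that the interpolating vertices $v_1,\ldots,v_{p-1}$ are new in $\GG_0$ and not literally vertices of $\GG$, so speaking of them as lying in ``undrilled components of $\GG$'' requires interpreting each such singleton as a (trivial) undrilled component --- but this is the paper's own informal usage, and your reasoning is sound once this is understood.
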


\subsection{Cubulating drilled bundles}
We shall need the following theorem due to Wise:
\begin{theorem}\cite[Theorem 15.1]{wise-hier}\label{15.1}
Let \(G\) be a group satisfying the following:
	\begin{enumerate}
		\item \(G\) is hyperbolic relative to virtually abelian subgroups.
		\item \(G\) splits as a graph of groups \(\Gamma\) where each edge group is relatively quasiconvex in \(G\).
		\item Each vertex group is virtually special.
		\item For each edge \(e\), the edge group \(G_e\) has trivial intersection with each \(Z^2\) in the fundamental group of the graph of groups \(\Gamma - e\).
	\end{enumerate}
	Then \(G\) is the fundamental group of a virtually special cube complex.
\end{theorem}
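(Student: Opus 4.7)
The plan is to prove this by induction on the length of a quasiconvex hierarchy associated to the graph of groups decomposition, with the base case being trivial or finite vertex groups (which are vacuously virtually special). The overall strategy follows Wise's program: at each stage, cubulate the current group via codimension-one subgroups, show the resulting cube complex action is cocompact and proper enough, and then upgrade cubulation to virtual speciality using the Malnormal Special Quotient Theorem (MSQT) together with Dehn filling machinery.

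First, I would reduce to the case where $\Gamma$ has a single edge: by iterating, a finite splitting can be peeled off one edge at a time, with each intermediate group remaining relatively hyperbolic relative to virtually abelian subgroups (by Bigdely-Wise type combination theorems) and each new vertex/edge group relatively quasiconvex by transitivity of relative quasiconvexity along hierarchies. Thus reduce to $G = A *_C$ or $G = A *_C B$, where vertex groups are virtually special, relatively hyperbolic, and $C$ is relatively quasiconvex with the trivial-intersection condition (4) supplying the crucial almost malnormality of $C$ with respect to the $\Z^2$ peripherals of $G$.

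Second, I would use $C$ as a codimension-one subgroup of $G$ to invoke the relatively hyperbolic Sageev construction, producing a CAT(0) cube complex $X$ on which $G$ acts. The virtual specialness of the vertex groups provides a rich supply of additional codimension-one quasiconvex subgroups via Haglund-Wise walls in special cube complexes; combined with the edge splitting, these suffice (in the sense of Hruska-Wise) to make $G$ act relatively cocompactly on $X$ with quasiconvex hyperplane stabilizers and with peripheral subgroups acting properly after passing to an appropriate completion. The edge group condition (4) is exactly what ensures that hyperplane stabilizers meet $\Z^2$ peripherals in rank at most one, so the induced action on each peripheral is by a hyperplane-free complex.

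Third, and most delicate, I would upgrade cubulation to virtual speciality. Apply MSQT to each vertex group with respect to its induced peripheral structure: for sufficiently deep fillings of the $\Z^2$ peripherals, each filled vertex group is hyperbolic virtually special. Using Groves-Manning-Osin Dehn filling, these assemble to a hyperbolic virtually special quotient $\bar G$ of $G$, in which the image of $C$ is quasiconvex and malnormal. By Wise's combination theorem for special cube complexes, $\bar G$ acts on a virtually special cube complex $\bar X$. One then lifts the finite covers of $\bar X$ that separate hyperplanes back to finite covers of $X$ via the filling kernel, using almost malnormality of $C$ to guarantee that the separation properties (disjoint, self-osculation-free, inter-osculation-free hyperplanes, in the Haglund-Wise sense) are preserved after unfilling. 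Agol's theorem applied to the hyperbolic filling grants the necessary residual finiteness input.

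The main obstacle, and the genuine heart of Wise's argument, is showing that separability of hyperplane stabilizers and the Haglund-Wise wall pathologies are controlled across the hierarchy. This requires propagating double coset separability through each HNN/amalgamated splitting, which is where condition (4) and almost malnormality of the edge group relative to the $\Z^2$-peripherals are indispensable: without them, the Dehn filling step introduces uncontrolled identifications among peripheral cosets and the MSQT hypotheses fail. I expect this step to require Wise's full machinery (the malnormal special combination theorem and the quasiconvex hierarchy induction on weight), and I see no shortcut that bypasses it.
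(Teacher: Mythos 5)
This statement is not proved in the paper at all: it is imported verbatim as \cite[Theorem 15.1]{wise-hier}, Wise's relatively hyperbolic quasiconvex hierarchy theorem, and is used as a black box to deduce Theorem~\ref{thm-cub}. There is therefore no ``paper's own proof'' to compare against, and you should not attempt to reprove it in this context; the correct move is simply to verify that the hypotheses (1)--(4) are met by $G=\pi_1(F)$, which is exactly what the paper does in the proof of Theorem~\ref{thm-cub} via Theorem~\ref{thm-relhyp}, Proposition~\ref{prop-relqc}, the cubulability hypothesis on the undrilled constituents, and the no-accidental-parabolics observation coming from the canonical reduction $\GG_\KK$.

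As a high-level gloss of what Wise actually does, your sketch is broadly in the right spirit (induction on hierarchy length, Sageev-type cubulation from relatively quasiconvex codimension-one subgroups, Malnormal Special Quotient Theorem combined with relatively hyperbolic Dehn filling, separability propagated through the splitting), but a few points are off. The claim that ``Agol's theorem applied to the hyperbolic filling grants the necessary residual finiteness input'' inverts the logical order: Wise's Chapter 15 is self-contained within his quasiconvex-hierarchy machinery and the MSQT, and Agol's theorem \cite{agol-vh} postdates and in part relies on that machinery rather than feeding into it. The honest content of the theorem is several hundred pages of delicate combinatorics of walls, canonical completion and retraction, and double-coset separability arguments, which your sketch gestures at but does not, and realistically could not, reproduce. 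For the purposes of this paper none of that is needed: only the verification that conditions (1)--(4) hold for the graph of groups over $\GG_\KK$.
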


For us, $G=\pi_1(F)$, and let $\Pi_0: F \to \GG$ denote the natural projection.

\begin{theorem}\label{thm-cub} Suppose that the graph $\GG_\K$ is the canonical reduced form of $\GG$. Suppose further that for every undrilled
	component $\KK$ of $\GG$, $\pi_1(\FF_\KK)$ is virtually special cubulable.
	Then the group \(G=\pi_1(F)\) is virtually special cubulable.
\end{theorem}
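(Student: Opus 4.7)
The plan is to apply Wise's Quasiconvex Hierarchy Theorem (Theorem~\ref{15.1}) to $G=\pi_1(F)$ equipped with the graph of groups structure $\GG_\K$ constructed in Section~\ref{sec-gog}, verifying its four hypotheses in turn.

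Hypothesis~(1), relative hyperbolicity with virtually abelian peripherals, is precisely Theorem~\ref{thm-relhyp}: $G$ is strongly hyperbolic relative to the collection $\{P_j\}$ with each $P_j\cong\Z\oplus\Z$. Hypothesis~(3) is the standing assumption of the theorem, together with the fact that the surface groups $\pi_1(S)$ (which may arise as vertex groups at auxiliary interpolating vertices of the reduction) are themselves virtually special. For hypothesis~(2), Lemma~\ref{lem-edgesofcanonical} identifies the edge groups of $\GG_\K$ as the subgroups $\pi_1(K)$ for $K$ a component of $S_d\setminus\bigcup_i\sigma_i$ over a drilled fiber $S_d$, and Proposition~\ref{prop-relqc} shows that each such edge group is relatively quasiconvex in $G$.

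The main technical step is hypothesis~(4): for every edge $e$ of $\GG_\K$, the edge group $G_e=\pi_1(K)$ meets each $\Z^2$ subgroup of $\pi_1(\GG_\K-e)$ trivially. The strategy is as follows. Since $G$ is hyperbolic relative to the $\Z^2$-peripherals $\{P_j\}$, every $\Z^2$ subgroup of $G$ is a conjugate of some $P_j$. Since $\pi_1(E)$ is Gromov hyperbolic and each vertex group of $\GG_\K$ embeds into $\pi_1(E)$, no vertex group of $\GG_\K$ contains a $\Z^2$; hence any $\Z^2$ subgroup $Z\subset G$ acts hyperbolically on the Bass--Serre tree of $\GG_\K$ with an invariant axis, and the $\Z$-direction fixing the axis pointwise lies in every edge group along the axis. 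Because $G_e=\pi_1(K)$ is a free group (as $K$ has nonempty boundary), $G_e\cap Z$ is cyclic or trivial; a nontrivial intersection occurs precisely when $K$ has a boundary circle $b$ lying on the torus $T_j$ corresponding to $Z$, in which case the intersection is generated by the longitudinal class of $b$. For this specific conjugate $Z=g_bP_jg_b^{-1}$ anchored at $b$, the element $b$ lies in the stabilizer of some lift $\tilde e$ of the edge $e$, so the invariant axis of $Z$ must pass through $\tilde e$. The meridian generator of $Z$ translates along this axis, and so its normal form in the Bass--Serre presentation of $G$ necessarily uses the stable letter at $e$. Consequently $Z\not\subseteq\pi_1(\GG_\K-e)$, which rules out any nontrivial intersection $G_e\cap Z$ with $Z\subset\pi_1(\GG_\K-e)$ and verifies hypothesis~(4).

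The principal obstacle in this plan is the explicit identification of the invariant axis of the anchored conjugate $g_bP_jg_b^{-1}$ with a loop in $\GG_\K$ that provably traverses $e$; this requires a careful analysis of how the longitudinal and meridional generators of each peripheral torus $T_j$ are distributed across the parallel-edge structure of $\GG_\K$ near each drilled fiber. Once all four hypotheses of Wise's theorem are in place, Theorem~\ref{15.1} immediately gives that $G$ is the fundamental group of a virtually special CAT(0) cube complex, which is the statement of Theorem~\ref{thm-cub}.
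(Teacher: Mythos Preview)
Your verification of hypotheses~(1)--(3) matches the paper exactly: Theorem~\ref{thm-relhyp} for (1), Lemma~\ref{lem-edgesofcanonical} together with Proposition~\ref{prop-relqc} for (2), and the standing hypothesis for (3).

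Where you diverge is in hypothesis~(4). The paper dispatches it in one line: the canonical reduced form guarantees that no \emph{non-peripheral} essential curve in any component $K$ of $S_d\setminus\bigcup_i\sigma_i$ is freely homotopic to a drilled curve (``no accidental parabolics''). This immediately rules out any intersection of $G_e=\pi_1(K)$ with a $\Z^2$ coming from a drilled curve in a \emph{different} fiber. For the peripheral curves of $K$---which \emph{are} parabolic---the relevant $\Z^2$ is the one attached to the adjacent drilled curve $\sigma_i\subset S_d$, and cutting $F$ along $K$ cuts the torus $\partial N_\ep(\sigma_i)$ along a longitude, turning it into an annulus; so that particular $\Z^2$ simply does not survive in $\pi_1(\GG_\K-e)$. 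The paper leaves this last point implicit.

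Your Bass--Serre axis argument is a correct way to make this explicit, but it is heavier machinery than needed, and the ``principal obstacle'' you flag (locating the axis of the anchored $\Z^2$ and showing it traverses $e$) is exactly what the topological picture above avoids. In fact the claim that the axis passes through $\tilde e$ because $b\in G_e$ fixes $\tilde e$ is not automatic: $b$ could in principle fix a larger subtree. The cleaner route is the direct observation that the meridian of $\partial N_\ep(\sigma_i)$, projected to $\GG_\K$, necessarily traverses the subdivision edges adjacent to $\sigma_i$ (including $e$), so the $\Z^2$ cannot lie in $\pi_1(\GG_\K-e)$. With that replacement your argument is complete and essentially equivalent to the paper's, just more explicit.
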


\begin{proof}
Theorem~\ref{thm-relhyp} 
proves that $\til F$ is strongly hyperbolic relative to $\PP$. Since the stabilizers of each $P \in \PP$ is $\Z+\Z$, Condition (1) of Theorem~\ref{15.1} is satisfied.\\

By Lemma~\ref{lem-edgesofcanonical}, the edge spaces over $\GG_\KK$ are given by
the components   of $S_d \setminus \bigcup_i \sigma_i$, where $S_d$ ranges over a reduced collection of  drilled fibers. The fundamental groups of these components
are relatively quasiconvex in $G$ by Proposition~\ref{prop-relqc}. Hence 
Condition (2) of Theorem~\ref{15.1} is satisfied.\\

Condition  (3) of Theorem~\ref{15.1} follows from the hypothesis that 
$\pi_1(\FF_\KK)$ is virtually special cubulable.\\

Condition (4) of Theorem~\ref{15.1} follows from the hypothesis that 
$\GG_\K$ is the canonical reduced form of $\GG$. Indeed, this hypothesis guarantees
that there are no accidental parabolics in the components $K$  of $S_d \setminus \bigcup_i \sigma_i$, where $S_d$ ranges over a reduced collection of  drilled fibers, i.e.\ no non-peripheral essential curve in any $K$  is freely homotopic to
a drilled curve. \\

Hence, by Theorem~\ref{15.1}, $G$ is virtually special cubulable.

\end{proof}

\subsection{Examples} We now give examples of surface bundles $E$ over graphs $\GG$ and drilled curves such that the hypotheses of Theorem~\ref{thm-cub} are satisfied:\\
\begin{eg}\label{eg-contratcible}
Each edge of $\GG$ contains a drilled surface. In this case, the undrilled components $\KK$ are precisely the vertices of $\GG$, and the associated spaces
are given by the fiber $S$. Since $\pi_1(S)$ is special cubulable, 
the hypotheses of Theorem~\ref{thm-cub} are satisfied.
\end{eg}

\begin{eg}\label{eg-3mfld}
 Undrilled components $\KK$ are either non-self-intersecting loops in $\GG$ or vertices. The associated vertex spaces are either hyperbolic 3-manifolds $M$ fibering over the circle, or the fiber $S$. By \cite{agol-vh}, $\pi_1(M)$ is virtually special and hence the hypotheses of Theorem~\ref{thm-cub} are satisfied.
 More generally, undrilled components $\KK$ could be homotopy equivalent to circles or contractible.
\end{eg}

A simple example for Example~\ref{eg-3mfld} is given by a graph $\GG$ with two 
vertices $v_1, v_2$, an edge $[v_1, v_2]$ and a loop at each vertex $v_1, v_2$.
Further, the edge $[v_1, v_2]$ has a single drilled fiber $S_w$ with one simple closed curve $\sigma \subset S_w$ drilled.

\begin{eg}\label{eg-mms}
Undrilled components $\KK$ are homotopy equivalent to a wedge of circles, and the
restriction of $E$ over any such $\KK$  are examples from \cite{mmscubulating}.
The main theorem of \cite{mmscubulating} then guarantees that the hypotheses of Theorem~\ref{thm-cub} are satisfied.

More generally, components $\KK$ could be a mixture of these cases, i.e.\ they could be
\begin{enumerate}
\item contractible, in which case the associated vertex space is homotopy equivalent to $S$,
\item homotopy equivalent to a circle, in which case the associated vertex space is homotopy equivalent to hyperbolic 3-manifolds $M$ fibering over the circle,
\item homotopy equivalent to a  wedge of circles, with the associated vertex space  homotopy equivalent to  one of the examples from \cite{mmscubulating}.
\end{enumerate}
\end{eg}

\section{Virtual algebraic fibering}\label{sec-algfib}

\begin{defn}
A finitely generated  group $G$ is said to \emph{ virtually algebraically fiber}
if there exists a finite index subgroup $G_1$ of $G$ such that $G_1$  
admits
a surjective homomorphism to $\Z$ with finitely generated kernel.
\end{defn}
A
 theorem of Kielak \cite{kielak} gives the following criterion for 
 virtual algebraic fibering.

\begin{theorem}\cite{kielak}\label{kielak}
	Let $G$ admit a geometric  action on a CAT(0) cube complex. Then the following are equivalent:
	\begin{enumerate}
	\item $G$ virtually algebraically fibers,
	\item the first $\ell^2-$betti number $\beta_1^{(2)} (G)$ equals zero.
	\end{enumerate}
\end{theorem}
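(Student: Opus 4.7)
The plan is to prove the two implications separately, with $(1) \Rightarrow (2)$ being relatively standard and $(2) \Rightarrow (1)$ being the substantive direction. For $(1) \Rightarrow (2)$: assume $G_1 \le G$ is a finite-index subgroup admitting a surjection $\phi: G_1 \to \Z$ with finitely generated kernel $K$. By a theorem of L\"uck on $\ell^2$-Betti numbers of extensions $1 \to K \to G_1 \to \Z \to 1$ with $K$ finitely generated (equivalently, using that $G_1$ is of type $\mathrm{FP}_1$ over the Novikov completion associated to the fibering character), one obtains $\beta_1^{(2)}(G_1) = 0$. Multiplicativity of $\ell^2$-Betti numbers under finite-index subgroups, $\beta_1^{(2)}(G_1) = [G:G_1]\cdot\beta_1^{(2)}(G)$, then forces $\beta_1^{(2)}(G) = 0$.

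For $(2) \Rightarrow (1)$, the strategy is to reduce to the \emph{residually finite rationally solvable} (RFRS) case and then run a Bieri--Neumann--Strebel argument. Since $G$ acts geometrically on a CAT(0) cube complex, Agol's theorem \cite{agol-vh} combined with the Haglund--Wise embedding of virtually special groups into right-angled Artin groups produces a RFRS finite-index subgroup $G_1 \le G$. Multiplicativity gives $\beta_1^{(2)}(G_1) = 0$. Since $G_1$ has a finite classifying space, it suffices to exhibit a rational character $\phi : G_1' \to \Z$ of some finite-index $G_1' \le G_1$ with finitely generated kernel; equivalently, by Bieri--Strebel, with $\{[\phi], [-\phi]\} \subset \Sigma^1(G_1')$.

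The core technical input is converting the analytic vanishing $\beta_1^{(2)}(G_1) = 0$ into the combinatorial nonemptiness of $\Sigma^1 \cap (-\Sigma^1)$ on some finite-index subgroup. Following the agrarian / twisted Novikov framework, for each character $\phi \in H^1(G_1;\R)$ form the Novikov completion $\widehat{\Z G_1}^\phi$ of the group ring; Sikorav's theorem identifies $\Sigma^1(G_1)$ with $\{[\phi] : H_0(G_1; \widehat{\Z G_1}^\phi) = 0\}$, and the finitely generated kernel condition is governed by a $H_1$-analogue. Using the RFRS tower $G_1 \ge G_2 \ge \cdots$ with $\bigcap G_n = 1$, together with L\"uck approximation, one approximates $\beta_1^{(2)}(G_1)$ by virtual first Betti numbers and, after twisting, by Novikov first Betti numbers along the tower. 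The vanishing of $\beta_1^{(2)}$ then propagates to force vanishing of the Novikov first Betti number on a nonempty open cone of characters at some finite level $G_n$; semicontinuity of Novikov Betti numbers ensures openness, density of rational points yields a rational $\phi_0$, and the Thurston-polytope-type symmetry of the resulting BNSR region around $0$ produces $\{[\phi_0], [-\phi_0]\} \subset \Sigma^1(G_n)$.

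The main obstacle is precisely this transfer: converting the asymptotic analytic statement $\beta_1^{(2)}(G_1) = 0$, which is a statement about von Neumann dimensions of $\ell^2$-chains, into an open-cone-of-characters statement about Novikov homology over a purely algebraic ring. The key technical tools are L\"uck's approximation theorem applied to the RFRS tower, a spectral sequence / transfer argument controlling how Novikov Betti numbers behave under passage along the tower, and a semicontinuity result that upgrades pointwise vanishing of a virtual Novikov invariant to vanishing on an open rational cone. Assembling these into a uniform vanishing result, and then extracting a genuine symmetric rational character from it, is the heart of Kielak's proof.
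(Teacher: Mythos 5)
This statement is not proved in the paper at all: it is quoted verbatim as Theorem~\ref{kielak}, a black-box citation of Kielak's result (together with the standard easy converse), and the paper immediately applies it to deduce Theorem~\ref{thm-vfib} from Proposition~\ref{prop-l2}. So there is no ``paper's own proof'' to compare against.

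As a reconstruction of Kielak's actual argument, your sketch is broadly on target. The $(1)\Rightarrow(2)$ direction via L\"uck's theorem on $\ell^2$-Betti numbers of $\Z$-extensions with finitely generated kernel, together with multiplicativity under finite index, is the standard and correct route. For $(2)\Rightarrow(1)$ you correctly identify the reduction (Agol plus Haglund--Wise) to the virtually RFRS case, the role of Sikorav's Novikov-homology characterization of $\Sigma^1$, the use of L\"uck approximation along the RFRS chain, and the semicontinuity/open-cone argument that ultimately yields a symmetric rational character. A few points where the sketch glosses over real work in Kielak's paper: the bridge from $\ell^2$-Betti numbers to Novikov Betti numbers is not a direct L\"uck approximation argument but goes through the Linnell skew field $\mathcal{D}(G)$ and the construction of suitable twisted group rings over subgroups in the RFRS chain; the ``semicontinuity'' and the production of an open cone require the machinery of the (universal) $L^2$-torsion polytope and the single-polytope theorem, not merely generic upper-semicontinuity of Novikov Betti numbers; and one needs to control passage between the various finite-index subgroups carefully so that vanishing at one level genuinely propagates. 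Your outline captures the architecture but compresses the hard analytic-to-algebraic transfer into a single sentence, which is where the bulk of Kielak's paper lives. Since the paper you are reviewing does none of this and simply invokes the theorem, the relevant takeaway is that your effort, while not wrong, is addressing a result the paper treats as external input.
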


\subsection{Vanishing first $l^2$ betti number} The aim of this subsection is to show:

\begin{prop}\label{prop-l2}
Let $F$ be a drilled surface bundle over a finite graph $\GG$. Let
$G=\pi_1(F)$. Then $\beta_1^{(2)} (G)=0$.
\end{prop}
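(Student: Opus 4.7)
The plan is to apply a Mayer--Vietoris argument in $L^2$-homology to the graph-of-groups decomposition of $G$ over the canonical reduction $\GG_\K$ (Definition~\ref{def-canred}). The vertex groups are $\pi_1(\FF_\KK)$ for the undrilled constituents, and by Lemma~\ref{lem-edgesofcanonical} the edge groups are the free groups $\pi_1(K)$, where $K$ ranges over the components of $S_d \setminus \bigcup_i \sigma_i$ (and $S_d$ ranges over a reduced collection of drilled fibers). All vertex and edge groups are infinite, so $\beta_0^{(2)}$ vanishes on each, and the $L^2$-Mayer--Vietoris long exact sequence collapses at its tail to
\[
\bigoplus_K H_1^{(2)}(\pi_1(K)) \;\longrightarrow\; \bigoplus_\KK H_1^{(2)}(\pi_1(\FF_\KK)) \;\longrightarrow\; H_1^{(2)}(G) \;\longrightarrow\; 0.
\]
The task reduces to showing that this first arrow has full von Neumann dimension at every undrilled vertex.

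For each undrilled constituent, $\pi_1(\FF_\KK)$ fits into the extension $1 \to \pi_1(S) \to \pi_1(\FF_\KK) \to \pi_1(\KK) \to 1$. If $\KK$ is not contractible, then $\pi_1(\KK)$ is a nontrivial free group---in particular infinite with an element of infinite order---while $\pi_1(S)$ is a finitely generated infinite normal subgroup with $\beta_1^{(2)}(\pi_1(S)) = 2g-2 < \infty$. The Lott--Lueck extension theorem for $L^2$-Betti numbers then gives $\beta_1^{(2)}(\pi_1(\FF_\KK)) = 0$, so such vertices contribute nothing to the middle sum. The remaining case is $\KK$ contractible, in which $\FF_\KK \simeq S$ and $\beta_1^{(2)}(\pi_1(\FF_\KK)) = 2g-2 > 0$; here the required surjectivity must come entirely from the edge side.

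For such a contractible $\KK$, connectedness of $\GG$ together with reducedness of the drilling ensures that $\KK$ is adjacent via a subdivision edge to at least one drilled fiber $S_d$, and the drilled curves $\{\sigma_i\} \subset S_d$ split $S_d$ into precisely the pieces $\{K\}$ that index the edges of $\GG_\K$ incident to $\KK$ from the $S_d$-side. This realizes $\pi_1(S) = \pi_1(S_d)$ as a graph of groups with vertex groups $\pi_1(K)$ and cyclic edge groups $\pi_1(\sigma_i) = \Z$. Since $\beta_0^{(2)}(\Z) = \beta_1^{(2)}(\Z) = 0$, the Fernos--Valette Mayer--Vietoris computation for a surface group split along a multicurve produces
\[
\bigoplus_{K \subset S_d \setminus \bigcup_i \sigma_i} H_1^{(2)}(\pi_1(K)) \;\twoheadrightarrow\; H_1^{(2)}(\pi_1(S)) = H_1^{(2)}(\pi_1(\FF_\KK))
\]
in full von Neumann dimension. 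Combined with the vanishing at non-contractible vertices, the global edge-to-vertex map is surjective in dimension, hence $\beta_1^{(2)}(G) = 0$.

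The main obstacle I anticipate is converting the abstract Mayer--Vietoris sequence of Hilbert $G$-modules into the dimension-exact form used above: the cokernel of the edge-to-vertex map need only have von Neumann dimension zero (not be literally trivial as a Hilbert module) for the conclusion to close, and it is precisely here that the Fernos--Valette machinery enters. A secondary subtlety is verifying the hypotheses of the Lott--Lueck extension theorem for $\pi_1(\FF_\KK)$ when $\KK$ is more complex than a circle (so $\FF_\KK$ is a graph of 3-manifolds rather than a 3-manifold); here the standing hyperbolicity of $\pi_1(E)$ (equivalently, convex cocompactness of $\pi_1(\GG)$ in $MCG(S)$) guarantees that $\pi_1(S)$ sits inside $\pi_1(\FF_\KK)$ as a finitely generated, genuinely normal subgroup.
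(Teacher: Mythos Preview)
Your approach via the $\GG_\K$ decomposition is genuinely different from the paper's, and the non-contractible case is handled correctly: the extension $1 \to \pi_1(S) \to \pi_1(\FF_\KK) \to \pi_1(\KK) \to 1$ with $\pi_1(\KK)$ infinite and $\beta_1^{(2)}(\pi_1(S)) < \infty$ does force $\beta_1^{(2)}(\pi_1(\FF_\KK)) = 0$ (this is L\"uck's extension theorem rather than Lott--L\"uck, but the statement you invoke is valid).

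The gap is in the contractible case. In the Mayer--Vietoris sequence for a graph of groups, the edge-to-vertex map is the \emph{signed difference} of the two edge-inclusions, not a single inclusion. Your argument shows that for a drilled fiber $S_d$ adjacent to a contractible $\KK$, the single inclusion $\bigoplus_K H_1^{(2)}(\pi_1(K)) \to H_1^{(2)}(\pi_1(S))$ is surjective in dimension. This does not imply that the Mayer--Vietoris differential is surjective onto the $\KK$-summand, because the other endpoint of each edge $K$ contributes a second term that you cannot ignore. Concretely: take $\GG$ to be a single circle with one drilled point. The unique undrilled component is the vertex, which is contractible, and every edge of $\GG_\K$ is a loop there. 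The Mayer--Vietoris map on each $H_1^{(2)}(\pi_1(K_j))$ is $\iota_* - (\phi \circ \iota)_*$, where $\phi$ is the monodromy. Your surface-decomposition argument gives surjectivity of $\iota_*$ alone; it says nothing about $\iota_* - (\phi \circ \iota)_*$. (The map \emph{is} surjective in dimension here, since $F$ is an honest $3$-manifold with toral boundary and Lott--L\"uck gives $\beta_1^{(2)} = -\chi(F) = 0$; but that is precisely what you are trying to prove, and invoking it would be circular.) The same issue arises whenever two contractible components are joined by edges from a common drilled fiber, or whenever a contractible component has edges looping back to itself.

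For comparison, the paper avoids this difficulty by never letting a surface-group vertex stand alone in the decomposition. It first chooses a spanning subgraph $\GG_0$ homotopy equivalent to a circle, so that $F|_{\GG_0}$ is a genuine $3$-manifold with toral boundary; Lott--L\"uck gives $\beta_1^{(2)} = 0$ there directly. The remaining edges are then attached one at a time, each time as an HNN extension over $\pi_1(S)$ or as an amalgamation adding an elementary drilled atom, and at every stage the vertex group already has vanishing $\beta_1^{(2)}$, so Fernos--Valette applies cleanly with no surjectivity claim to verify.
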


As an immediate consequence of Proposition~\ref{prop-l2} and Theorem~\ref{kielak}, we have the following:

\begin{theorem}\label{thm-vfib}
Let $F$ be a drilled surface bundle over a finite graph $\GG$ satisfying the hypotheses of Theorem~\ref{thm-cub}. Then $G=\pi_1(F)$ virtually algebraically fibers. 
\end{theorem}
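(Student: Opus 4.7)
The plan is to deduce Theorem~\ref{thm-vfib-intro} from Kielak's criterion (Theorem~\ref{kielak}) applied to $G=\pi_1(F)$. Theorem~\ref{thm-cub-intro} supplies the geometric hypothesis: virtual special cubulability gives a torsion-free finite-index subgroup $G_1 \leq G$ acting freely and cocompactly on a CAT(0) cube complex, and since virtual algebraic fibering is a commensurability invariant, it suffices to verify Kielak's $\ell^2$-condition on $G$. The substantive remaining task is therefore Proposition~\ref{prop-l2}, namely the vanishing $\beta_1^{(2)}(G)=0$.

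To establish Proposition~\ref{prop-l2}, I would work with the graph-of-groups decomposition of $G$ developed in Section~\ref{sec-prel-bdl} (or, equivalently, with the canonical reduction $\GG_\KK$ from Section~\ref{sec-gog}). The vertex groups are fundamental groups of atoms --- drilled atoms (finite-volume hyperbolic $3$-manifolds with toral cusps), undrilled atoms of the form $S\times I$, or undrilled atoms fibering hyperbolically over the circle --- and the edge groups are either surface groups $\pi_1(S)$ or subsurface groups $\pi_1(K)$. The work of Lott-L\"uck \cite{lott-luecke} computes the $\ell^{2}$-Betti numbers of every compact orientable $3$-manifold in terms of its geometric decomposition; applied to our atoms, this yields $\beta_1^{(2)}=0$ for drilled atoms and for hyperbolic fibered atoms, while for an $S\times I$ atom the contribution $\beta_1^{(2)}(\pi_1(S))=-\chi(S)$ is slated to cancel against the matching edge-group contribution. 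The Fern\'os-Valette framework \cite{fv-hha} then assembles these local computations into a global additivity statement along the graph of groups: schematically,
\[
\beta_1^{(2)}(G) \;=\; \sum_{v} \beta_1^{(2)}(G_v) \;-\; \sum_{e} \beta_1^{(2)}(G_e),
\]
and the bundle combinatorics cause the surface-group contributions at $S\times I$ vertices to telescope with those of incident edges, while the peripheral $\Z\oplus\Z$ subgroups contribute nothing (all $\ell^{2}$-Betti numbers of $\Z^{2}$ vanish).

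With Proposition~\ref{prop-l2} in hand, Theorem~\ref{kielak} delivers the virtual algebraic fibration of $G$ immediately. The principal obstacle I expect is the bookkeeping in the additivity step: one must verify that the Fern\'os-Valette hypotheses genuinely apply in our \emph{relatively} hyperbolic (rather than hyperbolic) and cubulated setting, and one must carefully track which edge-to-vertex inclusions contribute and which cancel. A secondary technical point is that in the canonical reduction $\GG_\KK$ the vertex groups are the constituent groups $\pi_1(\FF_\KK)$ rather than individual atom groups, so one needs vanishing of $\beta_1^{(2)}$ for each $\pi_1(\FF_\KK)$; but each undrilled constituent $\FF_\KK$ is itself an (undrilled) surface bundle over a graph, so the same Lott-L\"uck input combined with the virtual special cubulability of $\pi_1(\FF_\KK)$ (assumed in Theorem~\ref{thm-cub-intro}) supplies the needed vertex-level vanishing, either directly or inductively.
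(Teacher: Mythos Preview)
Your high-level strategy---apply Kielak's criterion using the cubulability from Theorem~\ref{thm-cub} together with Proposition~\ref{prop-l2}---is exactly what the paper does, and that part is fine.

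The gap is in your sketch of Proposition~\ref{prop-l2}. The Fern\'os--Valette result cited as Theorem~\ref{thm-fv} is \emph{not} an additivity formula of the shape $\beta_1^{(2)}(G)=\sum_v\beta_1^{(2)}(G_v)-\sum_e\beta_1^{(2)}(G_e)$; it is a vanishing criterion whose hypothesis is precisely that $\beta_1^{(2)}(G_v)=0$ for every vertex group. In every decomposition you propose (over $\GG^*$ or over $\GG_\KK$) there are vertex groups isomorphic to $\pi_1(S)$---the $S\times I$ atoms, or the contractible undrilled constituents---and these have $\beta_1^{(2)}(\pi_1(S))=-\chi(S)>0$, so Theorem~\ref{thm-fv} does not apply. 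Your fallback, that virtual special cubulability of $\pi_1(\FF_\KK)$ forces $\beta_1^{(2)}=0$, is also false: surface groups and free groups are special cubulable with positive $\beta_1^{(2)}$.

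The paper circumvents this by choosing decompositions in which every vertex group really does have $\beta_1^{(2)}=0$. It first takes a maximal subgraph $\GG_0\subset\GG$ homotopy equivalent to a circle; the restriction $F_{\GG_0}$ is then a compact $3$-manifold whose only boundary components are tori, so $\chi(F_{\GG_0})=0$ and Lott--L\"uck gives $\beta_1^{(2)}=0$. The remaining pieces are added inductively: each elementary drilled atom $\pi_1(K_\eta)$ is rewritten as a star-shaped graph of groups with root $\pi_1(S_x)$ and leaves $\Z\oplus\Z$, and when amalgamated onto the already-built group $L$ along $\pi_1(S_x)$, the resulting graph of groups has vertex groups $L$ and $\Z\oplus\Z$ (all with $\beta_1^{(2)}=0$) and $\Z$ edge groups---now Theorem~\ref{thm-fv} applies. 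The extra edges of $\GG\setminus\GG_0$ are handled as HNN extensions over the single vertex group $L$ in the same way. The point is that the surface group is never allowed to appear as a vertex group in its own right; it is always absorbed into something larger with vanishing $\beta_1^{(2)}$.
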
 

To prove Proposition~\ref{prop-l2}, we shall need a couple of results.
A fundamental theorem of Lott-Lueck gives the following.
\begin{theorem}\cite[Theorem 0.1]{lott-luecke}\label{thm-ll}
Let $M_r$ be a drilled atom of $F$, and $H_r=\pi_1(M_r)$. Then 
 $\beta_1^{(2)} (H_r)=-\chi(M_r)$.
\end{theorem}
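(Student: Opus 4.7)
The plan is to exploit the preferred hyperbolic structure on $M_r$ together with the Euler--Poincar\'e principle for $L^2$-Betti numbers. By Section~\ref{sec-drill3}, $M_r$ is either a finite-volume complete hyperbolic $3$-manifold (when $M$ fibers over the circle) or a geometrically finite hyperbolic $3$-manifold with convex surface boundary. In either case $M_r$ is aspherical, so $\beta_p^{(2)}(H_r) = \beta_p^{(2)}(M_r)$. The Euler--Poincar\'e principle
$$\chi(M_r) \;=\; \sum_{p \ge 0} (-1)^{p}\, \beta_p^{(2)}(M_r)$$
then reduces the theorem to showing that $\beta_p^{(2)}(M_r)$ vanishes for $p \in \{0,2,3\}$.

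The vanishing at $p=0$ is immediate from the infiniteness of $H_r$, and at $p=3$ it follows from the fact that $M_r$ is either noncompact (cusped) or has nonempty boundary, so carries no $L^2$ fundamental class. The substantive step is showing $\beta_2^{(2)}(M_r)=0$. In the finite-volume case I would realize $L^2$-cohomology analytically as $\Gamma$-invariant harmonic $L^2$-forms on $\mathbb{H}^3$ (Dodziuk--Atiyah), and invoke the vanishing of $L^2$-harmonic forms on $\mathbb{H}^3$ in all degrees (from the spectrum of the Hodge Laplacian on odd-dimensional hyperbolic space), controlling cusp contributions via a thick-thin decomposition and averaging over the torus cross-sections. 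Since $\chi(M_r)=0$ in this case, the formula then holds trivially.

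In the geometrically finite case with convex surface boundary $\partial_S M_r$, I would double $M_r$ along $\partial_S M_r$ to produce a finite-volume hyperbolic $3$-manifold $DM_r$ with $\pi_1(DM_r) \cong H_r *_{\pi_1(\partial_S M_r)} H_r$. A Mayer--Vietoris sequence in $L^2$-cohomology for this amalgam, combined with the finite-volume case applied to $DM_r$ (all $L^2$-Betti numbers vanishing) and the surface group $\pi_1(\partial_S M_r)$ (whose $L^2$-Betti numbers vanish outside degree $1$), isolates $\beta_2^{(2)}(H_r) = 0$. Combined with the Euler--Poincar\'e relation above and the identity $\chi(DM_r) = 2\chi(M_r) - \chi(\partial_S M_r)$, this yields $\beta_1^{(2)}(H_r) = -\chi(M_r)$.

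The main obstacle is the analytic vanishing of $L^2$-harmonic $2$-forms in the finite-volume case: pointwise vanishing on $\mathbb{H}^3$ is clean, but an $L^2$-harmonic form on the universal cover of $M_r$ need not descend compactly, and one must carefully rule out $L^2$-cohomology concentrating near the cusp horoballs (or, in the geometrically finite case fed into the doubling step, near the boundary at infinity). Handling this rigorously---essentially the content of \cite{lott-luecke}---is the technical heart of the proof; once it is in hand, the Mayer--Vietoris bookkeeping and Euler-characteristic manipulation are routine.
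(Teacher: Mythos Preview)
The paper does not prove this theorem; it is quoted verbatim from \cite{lott-luecke} and used as a black box in the proof of Proposition~\ref{prop-l2}. So there is no ``paper's own proof'' to compare against.

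That said, your sketch has a genuine gap in the geometrically finite case. Doubling $M_r$ (a drilled $S\times I$) along its two surface boundary components produces, topologically, a drilled $S\times S^1$ with trivial monodromy, in which each drilled curve $\sigma_i$ appears twice in mirror-image fibers. The annulus $\sigma_i\times[\tfrac14+\epsilon,\tfrac34-\epsilon]$ is then essential in $DM_r$ with boundary on two distinct torus components, so $DM_r$ is not acylindrical and hence admits no finite-volume hyperbolic structure. You therefore cannot feed $DM_r$ into your finite-volume vanishing argument. (Appealing to Lott--L\"uck for $DM_r$ directly would be circular.)

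Your finite-volume sketch is essentially correct: the vanishing of $L^2$-harmonic forms on $\mathbb{H}^3$ in all degrees, combined with the cusp analysis, does give $\beta_p^{(2)}=0$ for all $p$ when $M_r$ fibers over the circle. For the $S\times I$ case one must argue differently---for instance via the JSJ and geometrization machinery that \cite{lott-luecke} actually uses, or by a more careful Mayer--Vietoris on a genuinely hyperbolic mapping torus containing $M_r$ rather than the naive double.
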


We shall also use the following theorem of Fernos-Valette:

\begin{theorem}\cite[Theorem 1.1]{fv-hha}\label{thm-fv}
Let $G$ be a graph of groups with at least one edge, such that all
vertex groups satisfy $\beta_1^{(2)} (G_v)=0$. If every edge group is infinite,
and for every edge-to-vertex group inclusion, the  edge group is of infinite index in the vertex group, then $\beta_1^{(2)} (G)=0$.
\end{theorem}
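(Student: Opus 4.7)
My plan is to prove the theorem via the (weakly exact) Mayer--Vietoris long exact sequence in reduced $\ell^2$-cohomology coming from the action of $G$ on its Bass--Serre tree $T$. Realize $G$ as $\pi_1(Y)$, where $Y$ is a graph of aspherical spaces with vertex spaces $K(G_v,1)$ and edge spaces $K(G_e,1)$; the equivariant cellular chain complex of $T$ yields a short exact sequence of $\mathbb{Z}[G]$-modules
$$0 \to \bigoplus_{e} \mathbb{Z}[G/G_e] \to \bigoplus_{v} \mathbb{Z}[G/G_v] \to \mathbb{Z} \to 0,$$
with sums running over edges and vertices of the underlying graph. Applying reduced $\ell^2$-cohomology and using Shapiro's lemma in its $\ell^2$ form (which identifies $\bar{H}^n(G, \ell^2(G/H))$ with a Hilbert $\mathcal{N}(G)$-module of $\mathcal{N}(G)$-dimension $\beta_n^{(2)}(H)$ for each subgroup $H\leq G$) produces a weakly exact sequence of Hilbert $\mathcal{N}(G)$-modules relating $\bar{H}^n_{(2)}(G)$ to $\bigoplus_v \bar{H}^n_{(2)}(G_v)$ and $\bigoplus_e \bar{H}^n_{(2)}(G_e)$.

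Next I focus on the low-degree piece
$$0 \to \bar{H}^0_{(2)}(G) \to \bigoplus_v \bar{H}^0_{(2)}(G_v) \to \bigoplus_e \bar{H}^0_{(2)}(G_e) \to \bar{H}^1_{(2)}(G) \to \bigoplus_v \bar{H}^1_{(2)}(G_v).$$
For any infinite group $H$ one has $\bar{H}^0_{(2)}(H) = (\ell^2 H)^H = 0$, since no nonzero constant function is $\ell^2$-summable. The hypotheses force every group in sight to be infinite: edge groups by assumption, each vertex group because the graph has at least one edge and the adjacent edge group is infinite, and $G$ itself because it contains some vertex group. All three $\bar{H}^0_{(2)}$ terms therefore vanish, and the sequence collapses to an injection of Hilbert $\mathcal{N}(G)$-modules
$$\bar{H}^1_{(2)}(G) \hookrightarrow \bigoplus_v \bar{H}^1_{(2)}(G_v).$$

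The final step is a dimension count. Since $\beta_1^{(2)}(G_v)=0$ by hypothesis, $\bar{H}^1_{(2)}(G_v)$ is zero-dimensional as a Hilbert $\mathcal{N}(G_v)$-module for every $v$, and by the Shapiro identification above the corresponding summand on the right-hand side is zero-dimensional as a Hilbert $\mathcal{N}(G)$-module. Summing over the vertex set, the injection forces $\beta_1^{(2)}(G) = \dim_{\mathcal{N}(G)} \bar{H}^1_{(2)}(G) = 0$.

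The main obstacle, and the analytic heart of the argument, is establishing weak exactness of the Mayer--Vietoris sequence in \emph{reduced} $\ell^2$-cohomology for graphs of groups. The short exact sequence of coefficient modules does give an honest long exact sequence in unreduced cohomology, but passing to reduced cohomology---quotienting by the closure of the coboundaries rather than by the coboundaries themselves---can destroy exactness on the nose; one must verify that in each degree the image closure of the incoming map coincides with the kernel of the outgoing map as a closed Hilbert submodule, so that von Neumann dimensions remain additive. The hypothesis that every edge group has infinite index in each adjacent vertex group is exactly what rules out the degenerate configurations (vertex groups coinciding with edge groups, finite-index amalgamations) where these kernels can fail to be closed. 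Establishing this weak exactness is the bulk of the work in \cite[Theorem 1.1]{fv-hha}, after which the vanishing of $\beta_1^{(2)}(G)$ reduces to the clean dimension chase above.
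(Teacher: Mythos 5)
The paper does not actually prove this statement: it is quoted verbatim from Fern\'os--Valette \cite{fv-hha} and invoked as a black box (in the proofs of Lemma~\ref{lem-induction} and Lemma~\ref{lem-l2}), so there is no in-paper argument to compare against. That said, your sketch is a faithful outline of the Fern\'os--Valette argument itself: their paper is precisely about setting up the Bass--Serre Mayer--Vietoris sequence in $\ell^2$-(co)homology and establishing its weak exactness for Hilbert $\mathcal{N}(G)$-modules, after which the vanishing drops out of the low-degree dimension chase you describe once $\bar{H}^0_{(2)}$ of every (necessarily infinite) edge and vertex group is killed. You are right to single out weak exactness of the reduced sequence as the genuine analytic content, and correct that this is where the bulk of \cite{fv-hha} is spent. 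The one point worth phrasing more carefully is the Shapiro step: rather than coefficients $\ell^2(G/H)$, one should work with the (co)induced Hilbert $\mathcal{N}(G)$-modules built from $\ell^2(G)\otimes_{\mathbb{Z}[H]}(-)$ together with the induction principle $\beta_n^{(2)}(H;\mathcal{N}(G))=\beta_n^{(2)}(H;\mathcal{N}(H))$, so that the $\mathcal{N}(G)$-dimension of the $v$-th and $e$-th summands is literally $\beta_n^{(2)}(G_v)$ and $\beta_n^{(2)}(G_e)$; but this is a matter of bookkeeping, not substance.
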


Recall from Section~\ref{sec-gog} that if $\eta$ is a subdivision interval, the restriction $F_\eta$ of $F$ to $\eta$ has a simple topology: $F_\eta$
 is homeomorphic to $S \times [0,1]$ after removing 
$\ep-$neighborhoods of a non-empty 
family $\{\sigma_i\}$ of disjoint, homotopically distinct, essential simple closed curves on a unique drilled fiber $S_x$. In this subsection, we shall refer to such
an $F_\eta$ as an \emph{ elementary drilled atom}. 

Let $\sigma_1, \cdots, \sigma_k \subset S_x$ denote the drilled curves. Let
$\T_1,\cdots, \T_k$ denote $k-$copies of the standard torus $S^1 \times S^1$. 
Also,
let $K_\eta$ denote the 2-complex obtained as a quotient space
of $S_x \sqcup \bigcup_i \T_i$ by identifying $S^1 \times \{0\} \subset \T_i$
with $\sigma_i$ via a homeomorphism. 
Then $F_\eta$ is homotopy equivalent to $K_\eta$. Further, $\pi_1(F_\eta)=
\pi_1(K_\eta)$ has a graph of groups description, where the underlying graph
$\GG$ has $k+1$ vertices $0, \cdots, k$ say, and
\begin{enumerate}
\item $\GG$ is a tree with one root vertex $0$, and all other vertices 
$1, \cdots, k$ are connected to $0$ by an edge each,
\item the vertex group $G_0 = \pi_1(S_x)$,
\item for $i=1, \cdots, k$, each vertex group $G_i =\Z\oplus \Z$,
\item each edge group is $\Z$.
\end{enumerate} 

\begin{lemma}\label{lem-induction}
Let $L$ be a group with $\beta_1^{(2)} (L)=0$. Let $H\subset L_1$ be a subgroup isomorphic to $\pi_1(S_x)$. Let $G=L*_H \pi_1(K_\eta)$, where $H$ is identified via
an automorphism with $G_0 = \pi_1(S_x)$. Then $\beta_1^{(2)} (G)=0$.
\end{lemma}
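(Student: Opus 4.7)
The plan is to re-express $G$ as the fundamental group of a larger graph of groups and then invoke Theorem~\ref{thm-fv}.  First I would unfold the amalgam $G = L *_H \pi_1(K_\eta)$ using the given graph-of-groups decomposition of $\pi_1(K_\eta)$.  Since $H\subset L$ is identified with the central vertex group $G_0 = \pi_1(S_x)$ of $\pi_1(K_\eta)$, a standard Bass--Serre manipulation lets me present $G$ as the fundamental group of a graph of groups $\mathcal{G}$ whose underlying graph is a star with center $L$ and $k$ leaves, one per drilled curve $\sigma_i$.  Concretely, the vertex groups are $L$ at the center and $\Z\oplus\Z$ at each leaf, and the $i$-th edge group is $\Z$, included in $L$ as the cyclic subgroup generated by $\sigma_i$ (via $\sigma_i\in H\subset L$) and in the corresponding $\Z\oplus\Z$ as the first factor (the core curve of the attached torus $\T_i$).

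With $\mathcal{G}$ in hand, I would check the hypotheses of Theorem~\ref{thm-fv}.  The underlying graph has at least one edge since the family $\{\sigma_i\}$ is non-empty.  For the vertex groups, $\beta_1^{(2)}(L)=0$ is a hypothesis of the lemma, and $\beta_1^{(2)}(\Z\oplus\Z)=0$ because $\Z\oplus\Z$ is amenable.  Each edge group $\Z$ is infinite, and has infinite index in both adjacent vertex groups: infinite index in $\Z\oplus\Z$ is immediate, and $\langle\sigma_i\rangle$ has infinite index in $H=\pi_1(S_x)$ (a single cyclic subgroup of a non-elementary surface group), hence infinite index in $L$.  Theorem~\ref{thm-fv} then gives $\beta_1^{(2)}(G)=0$.

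The main (mild) subtlety is the Bass--Serre unfolding in the first paragraph: I need to justify that attaching $L$ to $\pi_1(K_\eta)$ along $H$, when $H$ is itself a vertex group of $\pi_1(K_\eta)$, produces the described star-shaped graph of groups with fundamental group $G$.  This follows from the standard principle that if a vertex group of a graph of groups admits its own graph of groups decomposition, the two can be amalgamated into one; here the central vertex group $\pi_1(S_x)$ of $\pi_1(K_\eta)$ is replaced by $L$ via the identification $H\cong\pi_1(S_x)$, and the edges from $\pi_1(S_x)$ to the tori descend to edges from $L$ to the tori with the same edge groups $\langle\sigma_i\rangle\subset H\subset L$.
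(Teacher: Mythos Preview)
Your proposal is correct and follows exactly the same approach as the paper: describe $G$ as a star-shaped graph of groups with central vertex group $L$, leaf vertex groups $\Z\oplus\Z$, and cyclic edge groups, then apply Theorem~\ref{thm-fv}. In fact you supply more detail than the paper does, explicitly verifying the infinite-index and vanishing-$\beta_1^{(2)}$ hypotheses and justifying the Bass--Serre unfolding.
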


\begin{proof}
$G$ admits a graph of group description, where the underlying graph
$\GG$ has $k+1$ vertices $0, \cdots, k$ say, and
\begin{enumerate}
	\item $\GG$ is a tree with one root vertex $0$, and all other vertices 
	$1, \cdots, k$ are connected to $0$ by an edge each,
	\item the vertex group $G_0 = L$,
	\item for $i=1, \cdots, k$, each vertex group $G_i =\Z\oplus \Z$,
	\item each edge group is $\Z$.
\end{enumerate} 
Then $G$ satisfies the hypotheses of Theorem~\ref{thm-fv}. Hence, $\beta_1^{(2)} (G)=0$.
\end{proof}

\begin{lemma}\label{lem-l2}
	Let $F$ be a drilled surface bundle over a finite graph $\GG$
	such that $\GG$ is homotopy equivalent to a circle. Let
	$G=\pi_1(F)$. Then $\beta_1^{(2)} (G)=0$.
\end{lemma}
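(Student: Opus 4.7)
The plan is to identify $\pi_1(F)$ with the fundamental group of a finite-volume complete hyperbolic $3$-manifold and then invoke the Lott-Lueck formula $\beta_1^{(2)}=-\chi$. Since $\GG$ is homotopy equivalent to $S^1$, I would first collapse a spanning tree of $\GG$ (after isotoping any drilled fibers whose base points sit on the tree off of it); this does not alter $\pi_1(F)$ or any of the hypotheses, so we may assume $\GG=S^1$. Then $E$ is the mapping torus of some homeomorphism $\phi\colon S\to S$, and the hyperbolicity of $\Gamma=\pi_1(E)$ forces $\phi$ to be pseudo-Anosov. Thurston's hyperbolization for surface bundles over the circle then identifies $E$ with a closed hyperbolic $3$-manifold.

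Next I would argue that $F$ itself is the underlying compact manifold of a complete finite-volume hyperbolic structure. Topologically $F=E\setminus \bigsqcup_i N_\ep(\sigma_i)$ is a compact orientable $3$-manifold whose boundary $\partial F=\bigsqcup_i T_i$ is a disjoint union of tori. It is irreducible (because $E$ is) and atoroidal: any $\Z\oplus\Z\subset \pi_1(F)$ maps under the quotient $\pi_1(F)\twoheadrightarrow \pi_1(E)$ to an abelian subgroup of the hyperbolic group $\pi_1(E)$, hence to a cyclic subgroup, and combined with the no-essential-annulus condition of Definition~\ref{def-drillbdl} this forces every such $\Z\oplus\Z$ to be conjugate into a peripheral $P_i$. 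Thus $F$ is an atoroidal Haken $3$-manifold with toral boundary, and Thurston's hyperbolization \cite[Ch.~15]{kapovich-book}, in exactly the form already invoked in Section~\ref{sec-drill3} for drilled atoms, endows $\operatorname{int}(F)$ with a complete finite-volume hyperbolic structure.

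Since $\partial F$ is a disjoint union of tori, $\chi(F)=\tfrac{1}{2}\chi(\partial F)=0$. Applying the Lott-Lueck formula \cite[Theorem 0.1]{lott-luecke}, of which Theorem~\ref{thm-ll} is the stated specialization to drilled atoms, I would then conclude
\[
\beta_1^{(2)}(G)\;=\;\beta_1^{(2)}(\pi_1(F))\;=\;-\chi(F)\;=\;0.
\]
The main obstacle I anticipate is the atoroidality step: the hypothesis in Definition~\ref{def-drillbdl} is phrased in terms of annuli, and translating it into the absence of non-peripheral $\Z\oplus\Z$ subgroups of $\pi_1(F)$ is the one place where some care is required, most cleanly by observing that an essential torus in $F$ would, together with a peripheral torus, bound an essential annulus and thereby violate the second condition of Definition~\ref{def-drillbdl}.
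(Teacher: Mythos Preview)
Your approach differs from the paper's and contains a gap in the reduction step. When $\GG$ is merely homotopy equivalent to a circle (a cycle $\CC$ with finite trees attached), the total space $E$ is \emph{not} a $3$-manifold at any vertex of valence $\geq 3$: locally it is $S \times (\text{cone on }k\text{ points})$. So ``isotoping drilled fibers off the tree'' is not a well-defined operation in $E$, and collapsing a spanning tree does not produce a homeomorphism (or even an obvious homotopy equivalence) between $F$ and a drilled mapping torus. What you actually need is the group-theoretic statement that $\pi_1(F)$ coincides with the fundamental group of a drilled mapping torus; this is true, but it requires writing down presentations. For a single tree-edge drilling of $\sigma$, one checks
\[
\pi_1(F)\;\cong\;\pi_1(F_\CC)\ast_{\langle\sigma\rangle}\Z^2\;\cong\;\pi_1\bigl(F_\CC\setminus N_\ep(\sigma')\bigr)
\]
by comparing HNN presentations, and then one iterates. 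You assert this without argument, and it is the nontrivial content of the reduction.

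The paper avoids this issue by a genuinely different route: it keeps $\GG$ as is, observes that the restriction $F_\CC$ over the embedded cycle is already a compact $3$-manifold with (empty or) toral boundary, so $\chi(F_\CC)=0$ and Lott--L\"uck gives $\beta_1^{(2)}(\pi_1(F_\CC))=0$ directly. The remaining tree edges are then absorbed one elementary drilled atom at a time via Lemma~\ref{lem-induction}, i.e.\ Fern\'os--Valette (Theorem~\ref{thm-fv}), which needs only that each added edge group $\Z$ has infinite index in the adjacent vertex groups. This sidesteps any manifold-level isotopy and also makes the Thurston hyperbolization step in your argument unnecessary: Lott--L\"uck already applies to $F_\CC$ as an irreducible $3$-manifold with toral boundary and infinite fundamental group, without first establishing a hyperbolic structure.
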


\begin{proof}
Let $\CC \subset \GG$ be a cycle with no repeated vertices. Then $\GG=\CC\cup \bigcup_{i=1,\cdots, k} \TT_i$, where
\begin{enumerate}
	\item  each $\TT_i$ is a finite tree 
\item $\TT_i$ intersects $\CC$ at a single point $p_i$,
\item $\TT_i \setminus \{p_i\}$ is disjoint from $\cup_{j \neq i} \TT_j$.
\end{enumerate} 
 Each edge of  $\TT_i$, $i=1,\cdots, k$,  can be subdivided as in Section~\ref{sec-gog}, so that each edge is a subdivision edge. In particular,
 after such subdivision, for every edge  $\eta$ of $\TT_i$, $i=1,\cdots, k$,
 $F_\eta$ is an { elementary drilled atom}. 
 
 Let $F_\CC$ denote the restriction of $F$ to the cycle $\CC$. Then $F_\CC$
 is a 3-manifold, all whose boundary components are tori. Hence  
 $\chi(F_\CC))=0$.
 By Theorem~\ref{thm-ll}, $\beta_1^{(2)} (\pi_1(F_\CC))=0$.
 
Proceeding  by induction on the number of elementary drilled atoms in $F$,
and applying Theorem~\ref{thm-fv} inductively, we conclude that 
 $\beta_1^{(2)} (G)=0$.
\end{proof}

\begin{proof}[Proof of Proposition~\ref{prop-l2}:] The first part of the argument in Section~\ref{sec-gog} allows us to modify $\GG$ to a graph where each edge group is isomorphic  to $\pi_{1}(S)$. Thus, assume without loss of generality that each edge in $\GG$ has edge group isomorphic  to $\pi_{1}(S)$.
	
Let $\GG_0$ denote a maximal subgraph of $\GG$ such that $\GG_0$ is 
homotopy equivalent to a circle. Let $F_0$ denote the restriction of $F$ to
$\GG_0$. Let $L=\pi_1(F_0)$. Let $n$ denote the number of edges of $(\GG \setminus \GG_0)$.
Then $G=\pi_1(F)$ admits a new graph of groups decomposition, where the base graph 
has one vertex $w$, and $n$ loops, with $G_w=L$, and each edge group isomorphic 
to $\pi_{1}(S)$. 

By Lemma~\ref{lem-l2},  $\beta_1^{(2)} (L)=0$.
Since $\GG_0$ is 
homotopy equivalent to a circle, edge group is of infinite index in the vertex group $L$. Hence, by Lemma~\ref{lem-induction},  $\beta_1^{(2)} (G)=0$.
\end{proof}
\subsection{Questions}
In this paper, we have only drilled simple closed curves $\sigma$ in fibers $S_x$. A similar drilling operation could be carried out even when a realization of $\sigma$ 
as a geodesic in the fiber $S$ has self-intersections. This can be done by homotoping
$\sigma$  slightly  in a product neighborhood of $S_x$ to convert  $\sigma$  into a knot
in $E$. Note that this is a non-canonical operation, as we have a choice of over-
and under-crossings at every self-intersection point. At any rate, after such a homotopy, the resulting knot can be drilled. 
We assume, however, that $\sigma$  represents a primitive element of $H=\pi_1(S)$.
We call this process \emph{generalized drilling}, and the resulting $F$ a \emph{generalized drilled bundle}. Relative hyperbolicity of the generalized drilled $G$ follows by essentially the same argument as in Theorem~\ref{thm-relhyp}.
\begin{qn}\label{qn-genlzdrill}
	Let $\Gamma = \pi_1(E)$ be hyperbolic.
	Let $F$ be a generalized drilled bundle obtained by the above generalized drilling operation applied to $E$. Is $G(:=\pi_1(F))$ cubulable?
\end{qn}

\section*{Acknowledgments} MM would like to thank Daniel Groves and Jason Manning for helpful conversations. We also thank  Daniel Groves for comments
and corrections to an earlier draft.

\newcommand{\etalchar}[1]{$^{#1}$}

\end{document}